\documentclass[10pt,reqno]{amsart}
\usepackage{amsmath,amssymb,amsfonts,amsthm}
\usepackage[left=3cm, right=3cm, top=2.8cm, bottom=2.8cm]{geometry}
\usepackage[utf8]{inputenc}
\usepackage[T1]{fontenc}
\usepackage{xcolor}
\usepackage{graphicx}


\newtheorem{theorem}{Theorem}
\newtheorem{lemma}[theorem]{Lemma}
\newtheorem{proposition}[theorem]{Proposition}
\newtheorem{corollary}[theorem]{Corollary}
\newtheorem{definition}[theorem]{Definition}
\newtheorem{remark}[theorem]{Remark}

\theoremstyle{plain}

\linespread{1.3}


\begin{document}

\title[A New approach for the unsteady Stokes equations]{A New approach for the unsteady Stokes equations with time fractional derivative in Bounded Domains}


\author[J. C. Oyola Ballesteros]{Juan C. Oyola Ballesteros}
\address[Juan C. Oyola Ballesteros]{Department of Mathematics, Federal University of Santa Catarina, Florian\'{o}polis - SC, Brazil\vspace*{0.3cm}}
\email[]{jcoyolaba@unal.edu.co}
\author[P. M. Carvalho-Neto]{Paulo M. Carvalho-Neto}
\address[Paulo M. de Carvalho Neto]{Department of Mathematics, Federal University of Santa Catarina, Florian\'{o}polis - SC, Brazil\vspace*{0.3cm}}
\email[]{paulo.carvalho@ufsc.br}


\subjclass[2020]{35R11, 26A33, 76D07, 35A15, 35A01, 35D30, 35Q35}


\keywords{Caputo fractional derivative, Stokes equations, Partial Differential Equations, Weak Solution}


\begin{abstract}
In this work, we introduce a novel variational framework for the study of the unsteady Stokes equations in a bounded, open Lipschitz domain $\Omega \subset \mathbb{R}^{n}$, involving a Caputo fractional derivative in time. The nonlocal nature of the fractional derivative presents significant analytical challenges, making classical methods, such as the Faedo–Galerkin approach, inadequate in their standard form for a full analysis of the problem. To address these difficulties, we develop and rigorously analyze new functional spaces specifically designed for the fractional setting. These spaces allow us to reformulate weak solutions in a manner consistent with the fractional dynamics, thereby enabling the successful implementation of a generalized Galerkin scheme. Our formulation not only extends the classical theory but also provides a foundation for the study of broader classes of fractional partial differential equations in fluid mechanics and related areas.
\end{abstract}

\maketitle

\section{Introduction}

One of the most renowned mathematical models in fluid dynamics is the Navier-Stokes equations. These equations aim to determine the velocity and pressure fields within a fluid. They are among the most widely used equations, describing the physics of numerous phenomena of both economic and academic interest. The Navier-Stokes equations find applications in modeling weather patterns, ocean currents, water flow in canoes, and many other areas. These equations can be derived directly from Newton's laws under the assumption of incompressibility, where pressure does not affect the fluid’s volume. For more details, we refer to \cite[Chapter 1]{CRdJDgApp}.

A significant breakthrough in the theory of partial differential equations was the introduction of the concept of {weak solutions} by J. Leray, as presented in \cite{JLEs}, particularly in the context of the Navier--Stokes equations. Leray’s theory establishes the existence of solutions in a new variational framework, which may potentially be irregular. This approach relies on energy estimates and specific limiting procedures involving the weak and weak-$\ast$ topologies in certain Bochner--Lebesgue spaces (see \cite{CfSt, CfGPCom, CfRtGv} for classical references).

On the other hand, fractional calculus, with its applications to solving fractional-order differential equations (see \cite{KiSrTrTh,FmFrac,SaKiMaFrac} for early developments), has become essential for addressing problems in fields such as fluid dynamics, porous media, control theory, and dynamical systems (see \cite{metzler, HsRmAbTFnGen, WRsWwFr, WwTF} for classical applications). Unlike standard calculus, which deals with derivatives and integrals of integer order, fractional calculus extends these operations to real or complex orders. 

Consequently, the combined importance of the two topics mentioned above highlights the growing interest in recent studies on the Navier--Stokes equations with Caputo fractional derivatives of order $\alpha \in (0,1)$ in time. These works aim to generalize the classical problem and explore the limiting case $\alpha = 1$ (see \cite{CarPl1, PaRFL,LiLiu1, ShSa1, ZhPe1, ZhPe2, ZoLvWu1} for some examples).

However, it is important to note that most existing studies focus on establishing the existence and uniqueness of mild solutions for the abstract Cauchy problem associated with the fractional Navier--Stokes equations. In contrast, only a few works address the existence of weak solutions, mainly due to the technical difficulties of applying the Faedo--Galerkin method in the fractional context. For instance, \cite{ZhPe2} defines and proves the existence of a weak solution to the Navier--Stokes equations with a Caputo fractional derivative of order $\alpha \in (0,1)$, using the same definition as in the classical case when $\alpha = 1$. Similarly, \cite{LiLiu1} introduces a new functional space (see \cite[Item (4.5)]{LiLiu1}) to study weak solutions of the multidimensional fractional Burgers equations (a special case of the compressible fractional Navier--Stokes equations) while still relying on the classical definition (see \cite[Definition 5.1]{LiLiu1}).

In conclusion, this work aims to go beyond traditional approaches by proposing a new formulation for weak solutions of partial differential equations with a Caputo fractional derivative in time. We not only introduce this new formulation, but also analyze a modified version of the functional spaces introduced in \cite{LiLiu1}, proving why they are more appropriate for the variational framework we adopt.

Although all discussions up to this point have consistently led back to the fractional Navier--Stokes problem, it is important to clarify that the present work does not aim to establish the existence or uniqueness of its solutions. Rather, our focus lies in introducing new functional spaces and proving key properties related to them, inspired by those proposed in \cite{LiLiu1}. Due to the additional difficulties introduced by the nonlinear term in the Navier--Stokes equations, we restrict our attention in this initial study to its classical linearization: the unsteady Stokes equations. This choice simplifies the analysis while still serving to illustrate the proposed framework. The treatment of the full nonlinear case is left for future work.

\subsection{A Novel Weak Formulation}

Let us first recall the fractional order approach to the unsteady Stokes equations. Let $\Omega$ be a bounded Lipschitz subset of $\mathbb{R}^n$, for some $n\geq2$, and let $\alpha\in(0,1)$, $T>0$, and $\nu>0$ be fixed constants. Consider $f:[0,T]\times\Omega\rightarrow\mathbb{R}^n$ as the forcing term and $u_0:\Omega\rightarrow\mathbb{R}^n$ as the initial condition. The unsteady Stokes equations with a Caputo fractional derivative in the time variable are given by
 \begin{equation}\label{navierstokes01}\left\{ \begin{array}{rll}
    	cD_t^\alpha u(t,x)-\nu\Delta u(t,x) +\nabla p(t,x)=f(t,x),&&\textrm{ in } (0,T)\times\Omega,\\
    	\operatorname{div} u(t,x)=0,&&\textrm{ in } (0,T)\times\Omega,\\
    	u(t,x)=0,&&\textrm{ on } [0,T]\times\partial\Omega,\\
	    u(0,x)=u_{0}(x),&&\textrm{ in } \Omega,
	    \end{array}\right.
   \end{equation}
where $cD_t^\alpha$ stands for the Caputo fractional derivative of order $\alpha$ (for more details see Section \ref{secfrac}).

It is common to use some classical notations in the study of the unsteady Stokes equation (see \cite{RTemamb} for more details). Let us first recall the function space 
$$\mathcal{V}:=\left\{v\in\big[\mathcal{D}(\Omega)\big]^n: \operatorname{div} \; v=0\right\},$$
where $\mathcal{D}(\Omega)$ denotes the test functions in $\Omega$. Also consider
\begin{equation}\label{202503141707}
	H:=\overline{\mathcal{V}}^{\mathbb{L}^{2}(\Omega)}
	=\left\{v\in\mathbb{L}^{2}(\Omega): \operatorname{div} v=0\textrm{ in }\Omega\textrm{ and }v\cdot \vec{n}=0\textrm{ on }\partial\Omega \right\},
\end{equation}
which with the induced topology of $\mathbb{L}^{2}(\Omega):=\big[{L}^{2}(\Omega)\big]^n$ becomes a Hilbert space, and finally
\begin{equation}\label{202503141708}
	V:=\overline{\mathcal{V}}^{\mathbb{H}_{0}^{1}(\Omega)}=\{v\in\mathbb{H}_{0}^{1}(\Omega):\; \operatorname{div} v=0\textrm{ and }\Omega\},
\end{equation}
that equipped with the inner product inherited from $\mathbb{H}_{0}^{1}(\Omega)=\big[{H}_{0}^{1}(\Omega)\big]^n$ is also a Hilbert space. 

Now suppose that $u$ and $p$ are sufficiently regular solutions of \eqref{navierstokes01}, and that $f$ is at least in $L^{2}(0,T;V^\prime)$. Then, if $\eta\in \mathcal{V}$ we have
\begin{equation}\label{SF2}
(cD_t^\alpha u(t),\eta)_H+\nu(u(t),\eta)_V=\langle f(t),\eta\rangle_{V^\prime,V},	\textrm{ for a.e. }t\in[0,T].
\end{equation}
Above the symbol $\langle\cdot,\cdot\rangle_{V^\prime,V}$ denotes the duality pairing between $V^\prime$ and $V$.

First, let $\eta = u(t)$ in \eqref{SF2}. By performing a formal calculation and {applying \cite[Corollary 19]{CarFrOyTo1}}, we derive the energy inequality
\begin{equation}\label{energysf01}
\frac{1}{2} c D_t^\alpha \|u(t)\|^2_H + \nu \|u(t)\|^2_V \leq \langle f(t), u(t) \rangle_{V^\prime,V}, \text{ for a.e. } t \in [0,T],
\end{equation}
which becomes an equality when $\alpha = 1$.

Following the classical ideas introduced by Leray in \cite{JLEs, JLEs2, JLEs3}, below we present a series of formal arguments to explore the regularity that can be expected from the weak solution of the unsteady Stokes equations, which is derived from \eqref{energysf01}.

\begin{itemize}
\item[(i)] Recall that when $\alpha = 1$, as is classically justified in the literature, integrating both sides of \eqref{energysf01} from $0$ to $t$ and applying Young's inequality allows us to deduce that
\begin{equation*}
\|u(t)\|^2_{H}+\nu\int_0^t\|u(s)\|^2_V\,ds
\leq \|u_0\|^2_{H}+\frac{1}{\nu} \int_0^t\|f(s)\|^2_{V^\prime}\,ds,	\textrm{ for a.e. } t\in[0,T].
\end{equation*}
The above inequality suggests that requiring $u \in L^{2}(0,T;V) \cap L^{\infty}(0,T;H)$ is a reasonable regularity condition for the classical weak solution. This regularity serves as a foundation for defining weak solutions to the unsteady Stokes equations.\vspace*{0.2cm}

\item[(ii)] However, when $\alpha \in (0,1)$, these function spaces may no longer be ideal. Mimicking the classical formulation, we integrate both sides of \eqref{energysf01} from \(0\) to \(t\), use Proposition \ref{Pro1}, and apply Young's inequality to obtain
\begin{multline*}
\qquad\quad J^{1-\alpha}_t\|u(t)\|^2_{H}+\nu \int_0^t\|u(s)\|^2_V\,ds
\leq \frac{t^{1-\alpha}}{\Gamma(2-\alpha)}\|u_0\|^2_{H} + \frac{1}{\nu} \int_0^t\|f(s)\|^2_{V^\prime}\,ds,	\quad \textrm{for a.e. } t\in[0,T].
\end{multline*}
From this, we derive the estimate
\begin{equation}\label{202505211721}
J^{1-\alpha}_t\|u(t)\|^2_{H}\Big|_{t=T}+\nu \|u\|^2_{L^2(0,T;V)}
\leq \frac{T^{1-\alpha}}{\Gamma(2-\alpha)}\|u_0\|^2_{H} + \frac{1}{\nu} \|f\|^2_{L^2(0,T;V^\prime)}.
\end{equation}
It follows from \eqref{202505211721} that the solution $u$ should belong to $L^{2}(0,T;V)$ and that $J^{1-\alpha}_t\|u(t)\|^2_{H}$, evaluated at $t=T$, must be finite. \end{itemize}

The discussion above indicates the need to consider a new subspace of $L^p(0,T;X)$, denoted as $L^p_\alpha(0,T;X)$ for any Banach space $X$ and $p \geq 1$. This subspace consists of Bochner-measurable functions $f:[0,T]\rightarrow X$ for which 
$$\int_0^T(T-s)^{\alpha-1}\|f(s)\|^p_{X}\,ds<\infty.$$

The study of this space, along with establishing the existence and uniqueness of solutions to the weak formulation of the fractional unsteady Stokes equations, becomes our primary focus from this point onward.

\subsection{Outline of the Manuscript}
Section \ref{secfrac} reviews key technical concepts, including Bochner-Lebesgue spaces, Bochner-Sobolev spaces, the Riemann-Liouville fractional integral, and the Caputo fractional derivative. It concludes with the proof of a new version of Carathéodory's existence and uniqueness theorem, specifically adapted to the requirements of our main problem. Section \ref{fracspace} examines the $L^p_\alpha(0,T;X)$ spaces, their connection to the classical $L^p(0,T;X)$ spaces, and the conditions on $X$ necessary to ensure that they are reflexive Banach spaces. Finally, Section \ref{galerfrac} presents a new formulation for the weak solution of the unsteady Stokes equations, proving its existence and uniqueness through the establishment of approximate solutions and the application of a limiting argument.

\section{Theoretical Prerequisites and some Novel Results}\label{secfrac}

We divide this section into two parts. First, we review the classical Bochner-Lebesgue and Bochner-Sobolev spaces and provide a brief overview of fractional-order integration and differentiation. Second, we present new results on the existence and uniqueness of solutions to the fractional Cauchy problem with a Carathéodory map as the forcing term. Notably, this second part is, to the best of the authors' knowledge, a novel contribution to the literature, and we offer a detailed explanation to ensure clarity and understanding.
\subsection{Fractional Calculus on Vector-Valued Functions} From now on, let us assume that $T>0$ is fixed and $X$ is a Banach space. The topics addressed here are mainly connected with fractional calculus, a theory that is well-established in the literature. For instance, we refer to \cite{ArVecValu, Baz, Car, KiSrTrTh, SaKiMaFrac} and references therein for more details. 

For $p\in[1,\infty]$, we use the symbol $L^{p}(0,T;X)$ to represent the set of all Bochner measurable functions $v:[0,T]\to X$ for which $\|v(\cdot)\|_{X}\in L^{p}(0,T;\mathbb{R})$. Moreover, $L^{p}(0,T;X)$ is a Banach space when considered with the norm: 
\begin{itemize}
	\item[(i)] $\|v\|_{L^{p}(0,T;X)}:=\left(\displaystyle\displaystyle\int_0^T\|v(t)\|_{X}^{p}\; dt\right)^{1/p},\textrm{ if }p\in[1,\infty)$;\vspace*{0.3cm}
	\item[(ii)] $\|v\|_{L^{\infty}(0,T;X)}:=\displaystyle \operatorname{ess} \sup\big\{\|v(t)\|_{X}:\textrm{ for almost every }t\in[0,T]\big\}$.
\end{itemize}

Also, for $p\in[1,\infty]$ and $m \in \mathbb{N}$, we use the symbol $W^{m,p}(0,T;X)$ to denote the vector space of all functions $v:[0,T] \to X$ such that, for each $k \in \mathbb{N}$ with $k \leq m$, the derivative $v^{(k)}$ belongs to $L^{p}(0,T; X)$, where $v^{(k)}$ denotes the $k$-th derivative in the weak sense. We call these spaces the Bochner-Sobolev spaces. These are Banach spaces when imbued with the norm:
\begin{itemize}
	\item[(i)] $\|v\|_{W^{m,p}(0,T;X)}:=\displaystyle\sum\limits_{k\leq m}\displaystyle\left(\int_0^T\|v^{(k)}(x)\|_{X}^{p}\; dx\right)^{1/p},\textrm{ if }p\in[1,\infty)$;\vspace*{0.3cm}
	\item[(ii)] $\|v\|_{W^{m,\infty}(0,T;X)}:=\displaystyle \sum\limits_{k\leq m}\operatorname{ess} \sup\big\{\|v^{(k)}(t)\|_{X}:\textrm{ for almost every }t\in[0,T]\big\}$.
\end{itemize}

Let us now introduce the concepts called Riemann-Liouville fractional differentiation and integration of order $\alpha$.
\begin{definition}
	Let $\alpha\in(0,\infty)$ and $v\in L^{1}(0,T;X)$. The Riemann-Liouville fractional integral of order $\alpha$, is denoted by $J_{t}^{\alpha}v(t)$, and is given by
	\begin{equation*}
		J_{t}^{\alpha}v(t):=\dfrac{1}{\Gamma(\alpha)}\int_0^t{(t-s)^{\alpha-1}v(s)}\,ds,\;\;\; \textrm{ for a.e. }t\in[0,T].
	\end{equation*}
\end{definition}

\begin{remark} We find it appropriate to emphasize that the above definition is supported by \cite[Theorem 2.5]{CarFe0}, which guarantees that for any $\alpha \in (0, \infty)$ and $v \in L^{1}(0, T; X)$, the fractional integral $J_t^{\alpha} v(t)$ exists for almost every $t \in [0, T]$ and is Bochner integrable on $[0, T]$.
\end{remark}

\begin{definition}
	Let $\alpha\in(0,1)$ and $v\in L^{1}(0,T;X)$ with $J_{t}^{1-\alpha}v(\cdot)\in W^{1,1}(0,T;X)$. We define the Riemann-Liouville  fractional derivative of order $\alpha$, which is denoted as $D_{t}^{\alpha}v(t)$, by
	\begin{equation*}\begin{array}{lll}
			D_{t}^{\alpha}v(t)
			:=\dfrac{d }{d t}\Big[J_{t}^{1-\alpha}v(t)\Big],
			\;\;\ \textrm{ for a.e. }t\in[0,T].
		\end{array}
	\end{equation*}
\end{definition}

It is important to recall that for $\alpha \in (0,1)$, $\beta \in (-1,\infty)$ and $v(t) = ct^{\beta}$, we have
\begin{equation*}
  D_{t}^{\alpha}v(t) = c \dfrac{\Gamma(\beta + 1)}{\Gamma(1 - \alpha + \beta)} t^{\beta - \alpha},
\end{equation*}
for almost every $t \in [0,1]$. In this example, if $\beta < \alpha$, the Riemann-Liouville fractional derivative of order $\alpha$ of $f$ exhibits a singular behavior at zero. To address this challenge, as well as for various physical and practical reasons, Caputo reformulated the Riemann-Liouville fractional derivative in his well-known paper \cite{CapLin}.

\begin{definition}
	Let $\alpha\in(0,1)$ and $v\in C([0,T],X)$ with $J_{t}^{1-\alpha}v(\cdot)\in W^{1,1}(0,T;X)$. The Caputo fractional derivative of order $\alpha$, which is denoted as $cD_{t}^{\alpha}v(t)$, is given by
	\begin{equation*}
		cD_{t}^{\alpha}v(t):=D_{t}^{\alpha}[v(t)-v(0)], \textrm{ for a.e. }t\in[0,T],
	\end{equation*}
where above $D_{t}^{\alpha}$ represents Riemann-Liouville fractional derivative of order $\alpha$.
\end{definition}

\begin{remark} We point out that the function space $C([0,T];X)$, as considered above, denotes the space of continuous functions $v: [0,T] \to X$ endowed with the norm
$$\|v\|_{C([0,T];X)}:=\sup\big\{\|v(t)\|_{X}:\textrm{ for every }t\in[0,T]\big\}.$$
\end{remark}

Let us now discuss some of the fundamental properties of these fractional derivatives that are critical for several proofs presented in this manuscript.

\begin{proposition}\label{Pro1}
	Let  $\alpha_{1},\alpha_{2}\in(0,1)$ be fixed real numbers and consider functions $v_1\in L^{1}(0,T;X)$ and $v_2\in C([0,T],X) $. Then the following statements are true.
	\vspace*{0.2cm}
	\begin{itemize}
		\item[(i)] $J_{t}^{\alpha_{1}}\big[J_{t}^{\alpha_{2}}v_1(t)\big]=J_{t}^{\alpha_{1}+\alpha_{2}}v_1(t)$,  for a.e. $t\in[0,T]$.\vspace*{0.2cm}
		\item[(ii)] $D_{t}^{\alpha_{1}}\big[J_{t}^{\alpha_{1}}v_1(t)\big]=v_1(t)$,  for a.e. $t\in[0,T]$.\vspace*{0.2cm}
		\item[(iii)] If $J_t^{1-\alpha_1}v_1(\cdot)\in W^{1,1}(0,T;X)$, then
		\begin{equation*}
			J_{t}^{\alpha_{1}}\big[D_{t}^{\alpha_{1}}v_1(t)]=v_1(t)-\dfrac{1}{\Gamma(\alpha_{1})}t^{\alpha_{1}-1}\{J_{s}^{1-\alpha_{1}}v_1(s)\}\big|_{s=0},  \textrm{ for a.e. }t\in[0,T].
		\end{equation*}
		Moreover, if there exists an integral function $v_3\in L^1(0,T;X)$ such that $v_1=J_{t}^{\alpha_{1}}v_3(t)$, then
		\begin{equation*}
			J_{t}^{\alpha_{1}}\big[D_{t}^{\alpha_{1}}v_1(t)\big]=v_1(t),  \textrm{ for a.e. }t\in[0,T].	
		\end{equation*}
		\item[(iv)] $cD_{t}^{\alpha_{1}}\big[J_{t}^{\alpha_{1}}v_2(t)\big]=v_2(t)$,  for a.e. $t\in[0,T]$.\vspace*{0.2cm}
		\item[(v)]
		If $J_t^{1-\alpha_1}v_2(\cdot)\in W^{1,1}(0,b;X)$, then 
\begin{equation*}J_{t}^{\alpha_{1}}\big[cD_{t}^{\alpha_{1}}v_2(t)\big]=v_2(t)-v_2(0),\textrm{ for a.e. }t\in[0,T].\end{equation*}
	\end{itemize}
\end{proposition}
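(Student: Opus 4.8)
The plan is to prove the five identities in order, treating (i) as the foundation on which the rest rest, together with the fundamental theorem of calculus for Bochner--Sobolev functions. For (i), I would expand both sides as iterated Bochner integrals and apply Fubini's theorem, which is justified because the remark following the definition of $J_t^\alpha$ guarantees that each inner integral exists and is Bochner integrable; after exchanging the order of integration the inner integral $\int_r^t (t-s)^{\alpha_1-1}(s-r)^{\alpha_2-1}\,ds$ is evaluated via the substitution $s = r+(t-r)\tau$, producing the Beta integral $B(\alpha_2,\alpha_1)=\Gamma(\alpha_1)\Gamma(\alpha_2)/\Gamma(\alpha_1+\alpha_2)$, so that the Gamma factors cancel and leave exactly $J_t^{\alpha_1+\alpha_2}v_1$. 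Part (ii) is then immediate: by definition $D_t^{\alpha_1}[J_t^{\alpha_1}v_1]=\frac{d}{dt}J_t^{1-\alpha_1}[J_t^{\alpha_1}v_1]$, and (i) collapses the composition to $J_t^{1}v_1(t)=\int_0^t v_1(s)\,ds$, whose derivative is $v_1$ for almost every $t$ by the Lebesgue differentiation theorem in the Bochner setting.

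The technical core is (iii), and I expect the commutation of $\frac{d}{dt}$ with $J_t^{\alpha_1}$ across the singular kernel to be the main obstacle. I would set $g:=J_t^{1-\alpha_1}v_1$, which by hypothesis lies in $W^{1,1}(0,T;X)$ and hence admits an absolutely continuous representative satisfying $g(t)=g(0)+\int_0^t g'(s)\,ds$, with $D_t^{\alpha_1}v_1=g'$. The key auxiliary identity is
\begin{equation*}
J_t^{\alpha_1}g'(t)=\frac{d}{dt}J_t^{\alpha_1}g(t)-\frac{t^{\alpha_1-1}}{\Gamma(\alpha_1)}\,g(0),
\end{equation*}
which I would obtain by rewriting $J_t^{\alpha_1}g(t)=\frac{1}{\Gamma(\alpha_1)}\int_0^t \tau^{\alpha_1-1}g(t-\tau)\,d\tau$ through $\tau=t-s$ and then differentiating in $t$. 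The delicate point is justifying differentiation under the integral sign despite the singularity of $\tau^{\alpha_1-1}$ at $\tau=0$; this rests on the absolute continuity of $g$ together with a dominated-convergence argument that controls the boundary contribution $t^{\alpha_1-1}g(0)/\Gamma(\alpha_1)$. Applying (i) to simplify $J_t^{\alpha_1}g=J_t^{1}v_1$ gives $\frac{d}{dt}J_t^{\alpha_1}g=v_1$ almost everywhere, and since $g(0)=\{J_s^{1-\alpha_1}v_1(s)\}|_{s=0}$ the stated formula follows. The \emph{moreover} clause is then a direct computation: if $v_1=J_t^{\alpha_1}v_3$ with $v_3\in L^1(0,T;X)$, then by (i) one has $g=J_t^{1-\alpha_1}J_t^{\alpha_1}v_3=J_t^{1}v_3=\int_0^t v_3(s)\,ds$, whose continuous representative vanishes at $t=0$, so the boundary term disappears.

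Finally, (iv) and (v) are deduced from (ii) and (iii) by verifying that the relevant initial terms vanish, using continuity of $v_2$. For (iv), the estimate $\|J_t^{\alpha_1}v_2(t)\|_X\leq \|v_2\|_{C([0,T];X)}\,t^{\alpha_1}/\Gamma(\alpha_1+1)\to 0$ as $t\to 0^+$ shows $(J_t^{\alpha_1}v_2)|_{t=0}=0$, whence $cD_t^{\alpha_1}[J_t^{\alpha_1}v_2]=D_t^{\alpha_1}[J_t^{\alpha_1}v_2]=v_2$ by (ii), noting that $C([0,T];X)\subset L^1(0,T;X)$ so (ii) applies. For (v), I would apply (iii) to $w:=v_2-v_2(0)$; its regularity hypothesis holds because $J_t^{1-\alpha_1}v_2\in W^{1,1}$ by assumption and the constant part contributes $v_2(0)\,t^{1-\alpha_1}/\Gamma(2-\alpha_1)\in W^{1,1}(0,T;X)$. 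Since both $J_s^{1-\alpha_1}v_2(s)$ and $J_s^{1-\alpha_1}[v_2(0)]$ tend to $0$ as $s\to 0^+$ by the same continuity estimate (here $1-\alpha_1>0$), the boundary term in (iii) vanishes, yielding $J_t^{\alpha_1}[cD_t^{\alpha_1}v_2]=w=v_2-v_2(0)$.
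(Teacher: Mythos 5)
Your proposal is correct. The paper itself does not prove Proposition \ref{Pro1}; it simply cites \cite{Car, KiSrTrTh}, and the argument you give (Fubini plus the Beta integral for (i), the fundamental theorem of calculus for Bochner--Sobolev functions for (ii), and reduction of (iii)--(v) to these via the boundary term $t^{\alpha_1-1}\{J_s^{1-\alpha_1}v(s)\}|_{s=0}$) is precisely the standard proof found in those references. The only step you flag as delicate --- differentiating $J_t^{\alpha_1}g$ under the integral sign across the singular kernel --- can be bypassed entirely: since $g\in W^{1,1}(0,T;X)$ one writes $g=g(0)+J_t^{1}g'$, applies (i) to get $J_t^{\alpha_1}g(t)=g(0)\,t^{\alpha_1}/\Gamma(\alpha_1+1)+J_t^{1}\big[J_t^{\alpha_1}g'\big](t)$, and differentiates this identity a.e.\ by Lebesgue differentiation, which yields your auxiliary formula with no dominated-convergence argument needed.
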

\begin{proof}
 We refere to \cite{Car, KiSrTrTh}.
\end{proof}
\subsection{Fractional Version Of Carath\'{e}odory's Problem}\label{caratsection}
Our aim in this subsection is to establish the existence and uniqueness of a global solution to the Cauchy problem
\begin{equation}
	\left\{\begin{array}{lll}
		\label{EDOF}
		cD_{t}^{\alpha}u(t)&=&G(t,u(t)),\textrm{ for a.e. }t\in[0,T],\\
		u(0)&=&
		u_{0}\in\mathbb{R}^{m},
	\end{array}\right.
\end{equation}
where $\alpha \in (0,1)$, $cD_{t}^{\alpha}$ denotes the Caputo fractional derivative, $T>0$ is a fixed value and $G:[0,T]\times\mathbb{R}^m\to\mathbb{R}^m$ is a Carathéodory map.  It is noteworthy that Lan and Webb in their work \cite[Theorems 5 and 6]{LaWe1} discussed a similar scenario when $m=1$, and the Caratheodory map grows at most linearly on the second variable. However, our focus here lies on addressing a slightly different problem using a different method. To this end, below we introduce some conventional concepts and a classical theorem. For the proofs and further details, we refer to \cite[Theorem 1.27]{RoNon}.
\begin{definition}
	\begin{itemize}
		\item[(i)] We say that $a:[0,T]\times\mathbb{R}^m\rightarrow\mathbb{R}^m$ is a Carath\'{e}odory map if the function $a(\cdot,x):[0,T]\rightarrow\mathbb{R}^m$ is measurable for each $x\in\mathbb{R}^m$, and the function $a(t,\cdot):\mathbb{R}^m\rightarrow\mathbb{R}^m$ is continuous for almost every $t\in[0,T]$.\vspace*{0.2cm}
		\item[(ii)] The Nemytskii Mapping $\mathcal{N}_a$, is a function that maps $v:[0,T]\rightarrow\mathbb{R}^m$ into $\mathcal{N}_a(v):[0,T]\rightarrow\mathbb{R}^m$ which is defined by
		$$\big[\mathcal{N}_a(v)\big](t)=a(t,v(t)).$$
	\end{itemize}
\end{definition}

From the above definition, we may prove the following result.

\begin{proposition}\label{carat} If $a:[0,T]\times\mathbb{R}^m\rightarrow\mathbb{R}^m$ is a Carath\'{e}odory map and $v:[0,T]\rightarrow\mathbb{R}^m$ is measurable, then $t\mapsto\big[\mathcal{N}_a(v)\big](t)$ is measurable. Moreover, if $a(t,x)$ satisfies
	$$\|a(t,x)\|_m\leq\gamma(t)+C\|x\|_m^{q/p},$$
	for some $\gamma\in L^p(0,T)$, with $p,q\in[1,\infty)$, then $\mathcal{N}_a$ is a bounded continuous mapping from {$L^q(0,T;\mathbb{R}^m)$} into $L^p(0,T;\mathbb{R}^m)$.
\end{proposition}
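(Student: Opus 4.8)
The plan is to prove this proposition in two independent stages, corresponding to its two assertions: first the measurability of the Nemytskii map applied to an arbitrary measurable function, and second the boundedness and continuity of $\mathcal{N}_a$ as an operator between the stated Lebesgue spaces under the growth hypothesis. For the measurability claim, I would use the standard approximation argument for Carathéodory maps. Given a measurable $v:[0,T]\to\mathbb{R}^m$, approximate it pointwise a.e. by a sequence of simple functions $v_k$. On each set where $v_k$ is constant, $t\mapsto a(t,v_k(t))$ reduces to $t\mapsto a(t,c)$, which is measurable by the first Carathéodory condition; hence $\mathcal{N}_a(v_k)$ is measurable. Then, using the second Carathéodory condition (continuity of $a(t,\cdot)$ for a.e.\ $t$), I would pass to the limit: for a.e.\ $t$, $a(t,v_k(t))\to a(t,v(t))$, so $\mathcal{N}_a(v)$ is a pointwise a.e.\ limit of measurable functions and therefore measurable.

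For the second assertion, boundedness follows quickly from the growth estimate. For $v\in L^q(0,T;\mathbb{R}^m)$ I would compute
\begin{equation*}
\|\mathcal{N}_a(v)\|_{L^p}^p=\int_0^T\|a(t,v(t))\|_m^p\,dt
\leq \int_0^T\bigl(\gamma(t)+C\|v(t)\|_m^{q/p}\bigr)^p\,dt,
\end{equation*}
and then apply the elementary inequality $(x+y)^p\leq 2^{p-1}(x^p+y^p)$ to split the right-hand side into $2^{p-1}\|\gamma\|_{L^p}^p$ plus $2^{p-1}C^p\int_0^T\|v(t)\|_m^{q}\,dt=2^{p-1}C^p\|v\|_{L^q}^q$. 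Both terms are finite, so $\mathcal{N}_a(v)\in L^p(0,T;\mathbb{R}^m)$ and the operator maps bounded sets to bounded sets.

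The continuity is the substantive part and the step I expect to be the main obstacle, since $\mathcal{N}_a$ is generally nonlinear and continuity cannot follow from boundedness alone. My plan is to prove sequential continuity via a subsequence argument combined with a dominated convergence theorem. Suppose $v_k\to v$ in $L^q(0,T;\mathbb{R}^m)$; I want $\mathcal{N}_a(v_k)\to\mathcal{N}_a(v)$ in $L^p$. It suffices to show that every subsequence has a further subsequence converging to $\mathcal{N}_a(v)$ in $L^p$. Passing to a subsequence, $L^q$-convergence yields a.e.\ pointwise convergence $v_k(t)\to v(t)$ together with an $L^q$-dominating function $g$ (by the converse to dominated convergence / Riesz–Fischer). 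The continuity of $a(t,\cdot)$ then gives $a(t,v_k(t))\to a(t,v(t))$ a.e., while the growth bound provides the pointwise domination $\|a(t,v_k(t))\|_m^p\leq 2^{p-1}\bigl(\gamma(t)^p+C^p g(t)^q\bigr)$, whose right-hand side is integrable. The generalized (Lebesgue) dominated convergence theorem then yields $L^p$-convergence of the subsequence.

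The delicate point to handle carefully is the extraction of the $L^q$-dominating function $g$ with $g\in L^q$: this is exactly the statement that an $L^q$-convergent sequence admits a subsequence dominated by a fixed $L^q$ function, which is the standard strengthening of the Riesz–Fischer completeness argument. Once that tool is in place, the subsequence-of-subsequences principle upgrades the conclusion from a single subsequence back to convergence of the full sequence $\mathcal{N}_a(v_k)\to\mathcal{N}_a(v)$ in $L^p$, completing the proof of continuity and hence of the proposition.
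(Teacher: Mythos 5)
Your proof is correct and is the standard Nemytskii-operator argument: simple-function approximation plus the Carath\'eodory conditions for measurability, the elementary convexity inequality for boundedness, and the subsequence-plus-dominated-convergence scheme (with an $L^q$ dominating function extracted \`a la Riesz--Fischer) for continuity. The paper does not prove this proposition itself but cites \cite[Theorem 1.27]{RoNon}, and the argument given there is essentially the one you reconstructed, so there is nothing to add.
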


In what follows, we adopt a structured approach. First, we establish a criterion to transform the fractional Cauchy problem \eqref{EDOF} into an equivalent integral formulation. Next, we prove the existence and uniqueness of a local solution and discuss how such solutions can be extended. Finally, we present a theorem concerning the existence and uniqueness of a global solution.

We begin by introducing two definitions for solutions of the Cauchy problem \eqref{EDOF}.

\begin{definition}
	\begin{itemize}
    \item[(i)] Let $\tau\in (0,T)$ and  $\phi:[0,\tau]\to\mathbb{R}^{m}$ a continuous function. Then, if $\phi$ satisfies both equations of  \eqref{EDOF} in $[0,\tau]$, we say that it is a local solution of \eqref{EDOF} in $[0,\tau]$.
	\item[(ii)] Let $\phi:[0,T]\to\mathbb{R}^{m}$ a continuous function. If $\phi$ satisfies both equations of \eqref{EDOF} in $[0,T]$, we say that it is a global solution of \eqref{EDOF}\ in $[0,T]$.
    \end{itemize}
\end{definition}

First, we focus on establishing the existence of a local solution to problem (\ref{EDOF}). To achieve this, we begin by proving its equivalent integral formulation.

{\begin{proposition}\label{202501291016}
		Let $\tau\in (0,T)$, $p\in(1,\infty)$, $\alpha\in\left(1/p,1\right)$, $\phi:[0,\tau]\to \mathbb{R}^{m}$ a continuous function and assume that $G:[0,T]\times\mathbb{R}^m\to\mathbb{R}^{m}$ is a Carath\'{e}odory map that satisfies
		\begin{equation}\label{concara}\|G(t,x)\|_m\leq\gamma(t)+C\|x\|_m^{q/p},\end{equation}
		for some $\gamma\in L^p(0,T)$, with $q\in[1,\infty)$. Then $\phi$ is a local  solution of \eqref{EDOF} in $[0,\tau]$, if and only if, $\phi$ satisfies the integral equation
		\begin{equation}\label{202505221115}
			\phi(t)=u_{0 }+\dfrac{1}{\Gamma(\alpha)}\int\limits_{0}^{t}(t-s)^{\alpha-1}G(s,\phi(s))\; ds,\;\; \forall t\in\;[0,\tau].
		\end{equation}
	\end{proposition}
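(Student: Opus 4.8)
The plan is to prove the two implications separately, with the continuity of the Riemann--Liouville integral of an $L^p$ function being the pivotal ingredient that makes the hypothesis $\alpha\in(1/p,1)$ indispensable. Throughout, write $g(t):=G(t,\phi(t))$. Since $\phi\in C([0,\tau];\mathbb{R}^m)$ is bounded, the growth condition \eqref{concara} yields $\|g(t)\|_m\le\gamma(t)+C\sup_{[0,\tau]}\|\phi\|_m^{q/p}$, so that $g\in L^p(0,\tau;\mathbb{R}^m)$; equivalently, this is Proposition \ref{carat} applied to the continuous (hence $L^q$) function $\phi$, which gives $\mathcal{N}_G(\phi)\in L^p(0,\tau;\mathbb{R}^m)$.

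The key auxiliary fact I would establish first is that, for $g\in L^p(0,\tau;\mathbb{R}^m)$ and $\alpha>1/p$, the function $t\mapsto J_t^\alpha g(t)$ belongs to $C([0,\tau];\mathbb{R}^m)$. Letting $p'$ be the conjugate exponent, the condition $\alpha>1/p$ is exactly what guarantees $(\alpha-1)p'>-1$, so that $s\mapsto(t-s)^{\alpha-1}\in L^{p'}(0,t)$ with $\|(t-\cdot)^{\alpha-1}\|_{L^{p'}(0,t)}$ increasing in $t$ and bounded on $[0,\tau]$. Hölder's inequality then gives, for every $t\in[0,\tau]$, a bound $\|J_t^\alpha g(t)\|_m\le C_{\alpha,p,\tau}\|g\|_{L^p(0,\tau)}$ with $C_{\alpha,p,\tau}$ independent of $t$, so $g\mapsto J_\cdot^\alpha g$ is bounded into the bounded functions. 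For continuous $g$ one checks directly (by dominated convergence, after the substitution $u=t-s$) that $J_\cdot^\alpha g$ is continuous with $J_0^\alpha g=0$; since $C([0,\tau];\mathbb{R}^m)$ is dense in $L^p(0,\tau;\mathbb{R}^m)$ and uniform limits of continuous functions are continuous, the uniform bound propagates this property to every $g\in L^p$.

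For necessity, suppose $\phi$ is a local solution. Being a solution forces $cD_t^\alpha\phi$ to be defined, which by its definition requires $J_t^{1-\alpha}\phi\in W^{1,1}(0,\tau;\mathbb{R}^m)$; hence Proposition \ref{Pro1}(v) applies. Applying $J_t^\alpha$ to both sides of $cD_t^\alpha\phi(t)=g(t)$ and invoking Proposition \ref{Pro1}(v) on the left yields $\phi(t)-u_0=J_t^\alpha g(t)$ for a.e.\ $t\in[0,\tau]$. Both sides are continuous on $[0,\tau]$---the left because $\phi$ is, the right by the auxiliary fact above---so an a.e.\ identity between continuous functions is an identity everywhere, establishing \eqref{202505221115} for all $t\in[0,\tau]$. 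For sufficiency, assume $\phi$ continuous satisfies \eqref{202505221115} for all $t$. Evaluating at $t=0$ gives $\phi(0)=u_0$ immediately, the integral over $[0,0]$ being zero. Writing $\phi-u_0=J_t^\alpha g$ and applying $J_t^{1-\alpha}$, Proposition \ref{Pro1}(i) gives $J_t^{1-\alpha}[\phi-u_0]=J_t^1 g=\int_0^{\cdot}g(s)\,ds$, an absolutely continuous function; since $J_t^{1-\alpha}\phi$ differs from it by the $W^{1,1}$ function $u_0\,t^{1-\alpha}/\Gamma(2-\alpha)$, we obtain $J_t^{1-\alpha}\phi\in W^{1,1}(0,\tau;\mathbb{R}^m)$ and the Caputo derivative is well defined. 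Finally $cD_t^\alpha\phi=D_t^\alpha[\phi-u_0]=D_t^\alpha[J_t^\alpha g]$, and Proposition \ref{Pro1}(ii) (valid since $g\in L^p\subset L^1$) gives $cD_t^\alpha\phi(t)=g(t)=G(t,\phi(t))$ for a.e.\ $t$, so $\phi$ is a local solution.

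I expect the main obstacle to be the auxiliary continuity result, specifically justifying cleanly that $J_\cdot^\alpha g$ is continuous for every $g\in L^p$ rather than merely defined a.e.; this is precisely the step that forces $\alpha>1/p$ together with the $L^p$ integrability of $g$ coming from \eqref{concara}. The remaining manipulations are routine once Proposition \ref{Pro1} is available, the only care being the upgrade from an a.e.\ identity to a pointwise one in the necessity direction.
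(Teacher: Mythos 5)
Your proof is correct and follows the same overall architecture as the paper's: obtain $G(\cdot,\phi(\cdot))\in L^p$ from Proposition \ref{carat}, pass between the differential and integral forms via Proposition \ref{Pro1}, and use the continuity of $J_t^\alpha$ on $L^p$ for $\alpha>1/p$ to upgrade the a.e.\ identity to a pointwise one. Two details differ. First, where the paper outsources the continuity of $t\mapsto J_t^\alpha g(t)$ for $g\in L^p$, $\alpha>1/p$, to \cite[Theorem 7]{CarFe1} (the Hardy--Littlewood-type result), you prove it from scratch via the uniform H\"older bound $\|J_t^\alpha g(t)\|_m\le C_{\alpha,p,\tau}\|g\|_{L^p}$ plus density of $C([0,\tau];\mathbb{R}^m)$ in $L^p$; this makes the argument self-contained at the cost of a short extra lemma, and correctly isolates $(\alpha-1)p'>-1$ as the reason for the hypothesis $\alpha>1/p$. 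Second, in the sufficiency direction the paper invokes item (iv) of Proposition \ref{Pro1}, which as stated requires the integrand to be \emph{continuous}, whereas $G(\cdot,\phi(\cdot))$ is only known to lie in $L^p$; you instead first verify $J_t^{1-\alpha}(\phi-u_0)=\int_0^{\cdot}g(s)\,ds\in W^{1,1}$ (so the Caputo derivative is genuinely defined) and then apply item (ii), which only needs $g\in L^1$. Your route is the more careful of the two on this point, and both give the same conclusion.
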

	\begin{proof}  Since $\phi$ is continuous in $[0,\tau]$, Proposition \ref{carat} ensures that $G(\cdot,\phi(\cdot))\in L^p(0,T;\mathbb{R}^m)$. 

		Now, if we assume that $\phi$ is a local solution of (\ref{EDOF}) in $[0,\tau]$, since
		$$cD_{t}^{\alpha}\phi(t)=G(t,\phi(t)),\textrm{ for a.e. }t\in[0,\tau],$$
		we can apply $J_{t}^{\alpha}$ to both sides of the above equality, which, together with Proposition \ref{Pro1} [item $(v)$], allows us to conclude that
		\begin{equation}\label{202505221033}
			\phi(t)=u_{0}+\dfrac{1}{\Gamma(\alpha)}\int\limits_{0}^{t}(t-s)^{\alpha-1}G(s,\phi(s))\; ds,\textrm{ for a.e. }t\in[0,\tau].
		\end{equation}
        Finally, \cite[Theorem 7]{CarFe1} guarantees that the right-hand side of \eqref{202505221033} belongs to $C([0, \tau]; \mathbb{R}^n)$, which in turn allows us to conclude that
		\begin{equation*}
			\phi(t)=u_{0}+\dfrac{1}{\Gamma(\alpha)}\int\limits_{0}^{t}(t-s)^{\alpha-1}G(s,\phi(s))\; ds,\;\; \forall t\in\;[0,\tau].
		\end{equation*}
        For a more detailed discussion of the above identity holding for every $t \in [0, \tau]$, see Remark \ref{hardy}.

        Conversely, since $G(\cdot, \phi(\cdot)) \in L^p(0, T; \mathbb{R}^m)$, we can apply \cite[Theorem 7]{CarFe1} to obtain $J_t^\alpha G(\cdot, \phi(\cdot)) \in C([0, \tau]; \mathbb{R}^n)$. Moreover, as $\phi \in C([0, \tau]; \mathbb{R}^n)$ and satisfies the integral equation
        \begin{equation}\label{202505221036}
        \phi(t) = u_0 + \frac{1}{\Gamma(\alpha)} \int_0^t (t - s)^{\alpha - 1} G(s, \phi(s))\; ds,\quad \forall t \in [0, \tau],
        \end{equation}
        the continuity of the Riemann--Liouville fractional integral ensures that $\phi(0) = u_0$. In addition, item (iv) of Proposition \ref{Pro1} allows us to apply the operator $cD_t^\alpha$ to both sides of \eqref{202505221036}, yielding
		\begin{equation*}
			cD_{t}^{\alpha}\phi(t)=G(t,\phi(t)),\;\;\text{a.e.}\; \text{in}\; [0,\tau].
		\end{equation*}
\end{proof}}

\begin{remark}\label{hardy}
The hypotheses imposed on $p$ and $\alpha$ in Proposition \ref{202501291016} can be justified by the following observations:

\begin{itemize}
    \item[(i)] To establish the fixed-point argument used in the proof of Carath\'{e}odory's classical theorem, it is necessary that $G(\cdot, \phi(\cdot)) \in L^p(0, T; \mathbb{R}^m)$ for some $1 \leq p < \infty$, for all continuous functions $\phi:[0, T]\rightarrow\mathbb{R}^m$. This condition ensures that the integral formulation of the Cauchy problem is well-defined as a mapping between subspaces of continuous functions. This property is closely tied to the fact that, for all $v \in L^1(0,T)$, the function
    $$[0,T] \ni t \mapsto \int_0^t v(s)\,ds$$
    defines a continuous function. However, this assertion does not hold in general when dealing with the Riemann-Liouville fractional integral of order $\alpha \in (0,1)$. For example, if we consider $0<\gamma < \beta < 1$ and take $v(t) = t^{-\beta}$, we see that $v \in L^1(0,T)$ but
    $$J_t^{\gamma} v(t) = \left[\dfrac{\Gamma(1-\beta)}{\Gamma(1+\gamma-\beta)}\right]t^{\gamma-\beta},$$
    which does not define a continuous function on $[0,T]$. \vspace*{0.2cm}

    \item[(ii)] In 1928, Hardy and Littlewood established that for all functions in $L^{p}(0,T)$ with $1 < p < \infty$, the fractional integral of order $\alpha \in (1/p,1)$ is Holder continuous of order $\alpha-(1/p)$. Recently, Carvalho-Neto and Fehlberg Júnior improved this result for Bochner-Lebesgue spaces (see \cite[Theorems 7 and 9]{CarFe1}). Consequently, to ensure that the integral formulation of the Cauchy problem related to the Caputo fractional derivative of order $\alpha \in (0,1)$ is well-defined as a mapping between subspaces of continuous functions, it is essential that $G(\cdot, \phi(\cdot)) \in L^p(0, T; \mathbb{R}^m)$ for some $p \in (1/\alpha,\infty)$, for all continuous functions $\phi: [0, T] \rightarrow \mathbb{R}^m$.
\end{itemize}
\end{remark}

Now we present our first main theorem, which discusses the existence and uniqueness of local solutions to problem \eqref{EDOF}.

{\begin{theorem}\label{SNS}
Let $p \in (1, \infty)$ and $\alpha \in \left(1/p, 1\right)$. Assume that $G : [0, T] \times \mathbb{R}^m \rightarrow \mathbb{R}^m$ is a Carathéodory function that satisfies \eqref{concara} for some $\gamma \in L^p(0, T)$, with $q \in [1, \infty)$, and is locally Lipschitz continuous in the second variable for almost every $t \in [0, T]$; that is, for each fixed $u_0 \in \mathbb{R}^m$, there exist $r = r(u_0) > 0$ and $L = L(u_0) \geq 0$ such that
\begin{equation*}
    \|G(t, x) - G(t, y)\|_m \leq L \|x - y\|_m,
\end{equation*}
for all $x, y \in B_r(u_0)$ and almost every $t \in [0, T]$, where $B_r(u_0)$ denotes the open ball centered at $u_0$ with radius $r$. Then, there exists $\tau \in [0, T]$ such that \eqref{EDOF} admits a unique local solution on $[0, \tau]$.
\end{theorem}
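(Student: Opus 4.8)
The plan is to convert the Cauchy problem into the equivalent integral equation supplied by Proposition \ref{202501291016} and to solve it by the Banach fixed-point theorem; because $G$ is assumed locally Lipschitz, this single argument delivers both existence and uniqueness. Fixing a radius $\rho \in (0, r)$, where $r = r(u_0)$ is the Lipschitz radius, and a time $\tau \in (0,T]$ to be specified later, I would work on the complete metric space
$$M := \big\{\phi \in C([0,\tau];\mathbb{R}^m) : \|\phi - u_0\|_{C([0,\tau];\mathbb{R}^m)} \le \rho\big\},$$
a closed subset of $C([0,\tau];\mathbb{R}^m)$, and define the operator $\mathcal{T}$ by
$$[\mathcal{T}\phi](t) := u_0 + \frac{1}{\Gamma(\alpha)}\int_0^t (t-s)^{\alpha-1} G(s,\phi(s))\,ds, \qquad t \in [0,\tau].$$
Since every $\phi \in M$ is continuous, Proposition \ref{carat} gives $G(\cdot,\phi(\cdot)) \in L^p(0,\tau;\mathbb{R}^m)$, and the Hardy--Littlewood-type continuity of the Riemann--Liouville integral recalled in Remark \ref{hardy} (namely \cite[Theorem 7]{CarFe1}, applicable precisely because $\alpha > 1/p$) ensures $\mathcal{T}\phi \in C([0,\tau];\mathbb{R}^m)$.

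I would first verify that $\mathcal{T}$ maps $M$ into itself. For $\phi \in M$ the growth condition \eqref{concara} yields $\|G(s,\phi(s))\|_m \le \gamma(s) + C(\|u_0\|_m + \rho)^{q/p}$, so splitting the kernel integral, the constant part contributes a term proportional to $\tau^\alpha$, while the $\gamma$ part is controlled by Hölder's inequality with conjugate exponent $p' = p/(p-1)$:
$$\int_0^t (t-s)^{\alpha-1}\gamma(s)\,ds \le \Big(\int_0^t (t-s)^{(\alpha-1)p'}\,ds\Big)^{1/p'}\|\gamma\|_{L^p(0,\tau)}.$$
This is exactly where the hypothesis $\alpha > 1/p$ is indispensable, since it is equivalent to $(\alpha-1)p' > -1$, which renders the last integral finite and produces a factor proportional to $\tau^{\alpha - 1/p}$. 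Both contributions tend to $0$ as $\tau \to 0^+$, so choosing $\tau$ small I can force $\|\mathcal{T}\phi - u_0\|_{C([0,\tau];\mathbb{R}^m)} \le \rho/2 < \rho$ for every $\phi \in M$; in particular $\mathcal{T}(M)\subseteq M$.

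Next I would check the contraction property. For $\phi, \psi \in M$ both take values in $B_r(u_0)$, so the local Lipschitz hypothesis gives $\|G(s,\phi(s)) - G(s,\psi(s))\|_m \le L\|\phi - \psi\|_{C([0,\tau];\mathbb{R}^m)}$ uniformly in $s$, and the singular kernel then integrates explicitly:
$$\|[\mathcal{T}\phi](t) - [\mathcal{T}\psi](t)\|_m \le \frac{L}{\Gamma(\alpha)}\Big(\int_0^t (t-s)^{\alpha-1}\,ds\Big)\|\phi-\psi\|_{C([0,\tau];\mathbb{R}^m)} \le \frac{L\tau^\alpha}{\Gamma(\alpha+1)}\|\phi-\psi\|_{C([0,\tau];\mathbb{R}^m)}.$$
Shrinking $\tau$ so that $L\tau^\alpha/\Gamma(\alpha+1) < 1$ makes $\mathcal{T}$ a contraction on $M$. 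Taking $\tau$ to be the minimum of the two thresholds above (and at most $T$), the Banach fixed-point theorem yields a unique $\phi \in M$ with $\mathcal{T}\phi = \phi$, which by Proposition \ref{202501291016} is a local solution of \eqref{EDOF} on $[0,\tau]$.

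The point that deserves the most care is promoting uniqueness \emph{within} the ball $M$ to genuine uniqueness among \emph{all} continuous solutions, a subtlety amplified by the nonlocal memory of the fractional integral. Here the strict margin $\rho/2$ obtained in the self-mapping step is the key device: if $\psi$ is any local solution on $[0,\tau]$, set $\sigma := \sup\{t \in [0,\tau] : \psi([0,t]) \subseteq \overline{B_\rho(u_0)}\}$; on $[0,\sigma]$ the self-mapping estimate applies to $\psi = \mathcal{T}\psi$ and forces $\|\psi(t) - u_0\|_m \le \rho/2$, so by continuity $\psi$ cannot reach the boundary of the ball and necessarily $\sigma = \tau$, that is, $\psi \in M$. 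Uniqueness of the fixed point in $M$ then gives $\psi = \phi$, completing the argument. If one prefers, the same conclusion can be reached by a fractional Gr\"onwall estimate on the difference $\phi - \psi$ once both are known to remain in $B_r(u_0)$.
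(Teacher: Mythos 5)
Your proposal is correct and follows essentially the same route as the paper: a Banach fixed-point argument on a closed ball in $C([0,\tau];\mathbb{R}^m)$, with the $\alpha>1/p$ hypothesis entering through the H\"older estimate on the singular kernel and the local Lipschitz condition supplying the contraction. The only cosmetic differences are that you bound the self-mapping term via the growth condition \eqref{concara} directly (the paper instead splits off $G(s,u_0)$ and applies the Lipschitz bound), and you spell out uniqueness among all continuous solutions with a ball-exit argument where the paper defers to a fractional Gr\"onwall inequality.
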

	\begin{proof}
		Given $u_{0}\in\mathbb{R}^{m}$, let $r>0$ and $L\geq 0$ denote the constants associated with the local Lipschitz continuity of $G$ at $u_{0}$. Choose $\beta\in(0,{{r}})$ and $\tau\in(0,T)$ such that
		\begin{equation*}
			\dfrac{\tau^{\alpha}L}{\Gamma(\alpha+1)}\leq \dfrac{1}{2}\;\;\; \textrm{ and } \;\;\; \dfrac{1}{\Gamma(\alpha)}\left(\dfrac{ p-1}{\alpha p-1}\right)^{1-(1/p)}\tau^{\alpha-(1/p)}\|G(\cdot,u_{0})\|_{L^{p}(0,T;\mathbb{R}^{m})}\leq \dfrac{\beta}{2}.
		\end{equation*}
		Consider
		\begin{equation*}
			K:=\{\varphi\in C([0,\tau],\mathbb{R}^{m}):\varphi(0)=u_{0}\,\textrm{ and }\,\|\varphi(t)-u_{0}\|_{m}\leq \beta,\;\;\;\; \forall t\in[0,\tau]\},
		\end{equation*}
		a nonempty and closed subset of $C([0,\tau],\mathbb{R}^{m})$. Due to Proposition \ref{carat} and item $(ii)$ of Remark \ref{hardy}, we can define the operator  $T:K\rightarrow C([0,\tau],\mathbb{R}^{m})$  by
		\begin{equation*}
			T\varphi(t)=u_{0}+\dfrac{1}{\Gamma(\alpha)}\int_{0}^{t}(t-s)^{\alpha-1}G(s,\varphi(s))\; ds.
		\end{equation*}
		To apply the Banach fixed-point theorem, we shall prove that $T(K)\subset K$ and that $T$ is a contraction. First, observe that for $\psi\in K$, as argued before, $J_t^\alpha G(t,\psi(t))$ is continuous, which ensures that $T\psi\in C([0,\tau];\mathbb{R}^{m})$ and $T\psi(0)=u_{0}$. Moreover, notice that 
		\begin{equation*}
			\|T\psi(t)-u_{0}\|_{m}\\\leq\dfrac{1}{\Gamma(\alpha)} \int_{0}^{t}(t-s)^{\alpha-1}\big[\|G(s,\psi(s))-G(s,u_{0})\|_{m}+\|G(s,u_{0})\|_{m}\big]\;\; ds,
		\end{equation*}
		for all $t\in[0,\tau]$. Therefore, since Proposition \ref{carat} ensures that $G(\cdot,u_{0})$ belongs to $ L^{p}(0,T;\mathbb{R}^{m})$, we have 
		\begin{multline*}
			\dfrac{1}{\Gamma(\alpha)}\int_{0}^{t} (t-s)^{\alpha-1}\|G(s,u_{0})\|_{m}\;\;ds \leq\dfrac{1}{\Gamma(\alpha)}\|(t-\cdot)^{\alpha-1}\|_{L^{p/(p-1)}(0,t)}\;\;\|G(\cdot,u_{0})\|_{L^{p}(0,t;\mathbb{R}^{m})}
			\\\leq\dfrac{1}{\Gamma(\alpha)}\left(\dfrac{ p-1}{\alpha p-1}\right)^{1-(1/p)}\tau^{\alpha-(1/p)}\|G(\cdot,u_{0})\|_{L^{p}(0,T;\mathbb{R}^{m})},
		\end{multline*}
		for all $t\in[0,\tau]$, what implies that
		\begin{multline*}
			\|T\psi(t)-u_{0}\|_{m}\leq \dfrac{\tau^{\alpha}L\beta}{ \Gamma(\alpha+1)}+\dfrac{1}{\Gamma(\alpha)}\left(\dfrac{ p-1}{\alpha p-1}\right)^{1-(1/p)}\tau^{\alpha-(1/p)}\|G(\cdot,u_{0})\|_{L^{p}(0,T;\mathbb{R}^{m})}
			\leq\dfrac{\beta}{2}+\dfrac{\beta}{2}=\beta.
		\end{multline*}
	
		 Now, if $\psi_1,\psi_2 \in K$, we have that
		\begin{multline*}
			\|T\psi_1(t)-T\psi_2(t)\|_{m}\leq\dfrac{1}{\Gamma(\alpha)} \int_{0}^{t} (t-s)^{\alpha-1}\|G(s,\psi_1(s))-G(s,\psi_2(s))\|_{m}\;\; ds \\
			\leq \dfrac{\tau^{\alpha}L}{\Gamma(\alpha+1)}\sup\limits_{s\in[0,\tau]}\|\psi_1(s)-\psi_2(s)\|_{m}
			\leq\dfrac{1}{2}\sup\limits_{s\in[0,\tau]}\|\psi_1(s)-\psi_2(s)\|_{m},
		\end{multline*}
		for all $t\in[0,\tau]$, or in other words, $T$ is a contraction. Summarizing the conclusions above, and applying the Banach fixed-point theorem, we conclude that there exists $\tau \in (0, T)$ and a unique local solution $\phi : [0, \tau] \to \mathbb{R}^m$ to problem~\eqref{EDOF} in $K$. 

To complete the proof, we note that any continuous function from $[0, \tau]$ into $\mathbb{R}^n$ that also satisfies the integral formulation \eqref{202505221115} must coincide with the unique local solution $\phi \in K$ whose existence we have just established. However, we omit the details, as the result follows from a standard argument based on a classical fractional Gronwall inequality, which can be found for instance in \cite[Lemma 7.1.2]{HenG}.
\end{proof}}
Next, our attention turns to extending the local solution, which exist due to the conclusion of Theorem \ref{SNS}. To accomplish this, we initially introduce the concept of continuation of local solutions, followed by the results that prove the existence and uniqueness of such extensions.
\begin{definition}
	\begin{itemize}
    \item[(i)] Let $\phi: [0, \tau] \to \mathbb{R}^m$ be a local solution on $[0, \tau]$ of \eqref{EDOF}. If $\tau^* > \tau$ and $\phi^*: [0, \tau^*] \to \mathbb{R}^m$ is a local solution of \eqref{EDOF} on $[0, \tau^*]$, then we say that $\phi^*$ is a continuation of $\phi$ on $[0, \tau^*]$.
    \item[(ii)] We say that a function $\phi:[0,\tau^{*})\to\mathbb{R}^{m}$ is a local solution of \eqref{EDOF} on $[0,\tau^{*})$, if it is a local solution of \eqref{EDOF} on $[0,\tau]$ for all $\tau\in(0,\tau^{*})$.
    \item[(iii)] If $\phi:[0,\tau^{*})\to\mathbb{R}^{m}$ is a local solution of \eqref{EDOF} on $[0,\tau^{*})$ but is not a local solution of \eqref{EDOF} in $[0,\tau^*]$, then we call it a maximal local solution.
    \end{itemize}
\end{definition}

Before addressing our next Theorem, we establish a simple technical lemma.

\begin{lemma}\label{auxlema} Let ${t}_2>{t}_1>0$, $p\in(1,\infty)$, $\alpha\in\left(1/p,1\right)$ and $v\in L^p(0,{t}_1;\mathbb{R}^m)$. Then we have that
\begin{multline*}\left\|\dfrac{1}{\Gamma(\alpha)}\int_{0}^{{t}_1}\big[({t}_1-s)^{\alpha-1}-({t}_2-s)^{\alpha-1}\big]v(s)ds\right\|_{m}
\\\leq \dfrac{1}{\Gamma(\alpha)}\left(\dfrac{p-1}{\alpha p-1}\right)^{1-(1/p)}({t}_2-{t}_1)^{\alpha-(1/p)}\|v\|_{L^p(0,{t}_1;\mathbb{R}^m)}.
\end{multline*}
\end{lemma}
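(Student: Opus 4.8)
The plan is to reduce the vector-valued estimate to a scalar H\"older inequality and then evaluate an explicit elementary integral. First I would pass the norm inside the integral by the triangle inequality for the Bochner integral, obtaining
$$\left\|\frac{1}{\Gamma(\alpha)}\int_0^{t_1}\bigl[(t_1-s)^{\alpha-1}-(t_2-s)^{\alpha-1}\bigr]v(s)\,ds\right\|_m \le \frac{1}{\Gamma(\alpha)}\int_0^{t_1}\bigl|(t_1-s)^{\alpha-1}-(t_2-s)^{\alpha-1}\bigr|\,\|v(s)\|_m\,ds.$$
Since $\alpha-1<0$, the map $\tau\mapsto\tau^{\alpha-1}$ is decreasing on $(0,\infty)$, and for $s\in[0,t_1)$ we have $0<t_1-s<t_2-s$; hence $(t_1-s)^{\alpha-1}>(t_2-s)^{\alpha-1}>0$, the absolute value may be dropped, and a nonnegative kernel remains.

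Next I would apply H\"older's inequality with conjugate exponents $p$ and $p'=p/(p-1)$, separating $\|v\|_{L^p(0,t_1;\mathbb{R}^m)}$ from the factor $\bigl(\int_0^{t_1}[(t_1-s)^{\alpha-1}-(t_2-s)^{\alpha-1}]^{p'}\,ds\bigr)^{1/p'}$. The crucial ingredient is the elementary inequality $(a-b)^r\le a^r-b^r$, valid for $a\ge b\ge 0$ and $r\ge 1$, which is proved by checking that $a\mapsto a^r-b^r-(a-b)^r$ vanishes at $a=b$ and has nonnegative derivative. Applying it with $a=(t_1-s)^{\alpha-1}$, $b=(t_2-s)^{\alpha-1}$ and $r=p'\ge 1$ yields
$$\bigl[(t_1-s)^{\alpha-1}-(t_2-s)^{\alpha-1}\bigr]^{p'}\le (t_1-s)^{(\alpha-1)p'}-(t_2-s)^{(\alpha-1)p'},$$
which renders the remaining integral explicitly computable.

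I would then evaluate the two integrals via the substitutions $u=t_1-s$ and $u=t_2-s$. Setting $\mu:=(\alpha-1)p'+1=(\alpha p-1)/(p-1)$, which is positive precisely because $\alpha>1/p$ (this is exactly where that hypothesis enters, ensuring integrability at $s=t_1$), I obtain
$$\int_0^{t_1}\bigl[(t_1-s)^{(\alpha-1)p'}-(t_2-s)^{(\alpha-1)p'}\bigr]\,ds=\frac{t_1^\mu-t_2^\mu+(t_2-t_1)^\mu}{\mu}.$$
Since $t_1<t_2$ and $\mu>0$, we have $t_1^\mu\le t_2^\mu$, so this quantity is bounded by $(t_2-t_1)^\mu/\mu$. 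Using the identities $1/\mu=(p-1)/(\alpha p-1)$, $\mu=(\alpha-1/p)p'$, and $1/p'=1-1/p$, raising to the power $1/p'$ and recombining with the $1/\Gamma(\alpha)$ factor and $\|v\|_{L^p(0,t_1;\mathbb{R}^m)}$ reproduces exactly the claimed bound.

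I anticipate that the main obstacle is not any single difficult estimate but the careful bookkeeping of exponents: verifying $\mu>0$ (where $\alpha>1/p$ is used), confirming the algebraic identities $1/\mu=(p-1)/(\alpha p-1)$ and $\mu=(\alpha-1/p)p'$, and discarding the negative contribution $t_1^\mu-t_2^\mu$ in the correct direction. The inequality $(a-b)^r\le a^r-b^r$ is the one genuinely nontrivial tool, and everything else reduces to routine computation.
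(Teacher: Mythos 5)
Your proof is correct and follows essentially the same route as the paper: Hölder's inequality with exponents $p$ and $p/(p-1)$ combined with the elementary inequality $(b-a)^{\eta}\le b^{\eta}-a^{\eta}$ for $0\le a\le b$, $\eta\ge 1$, applied to the kernel. The only cosmetic difference is in evaluating the resulting weight integral: you compute the antiderivatives directly and discard the nonpositive term $t_1^{\mu}-t_2^{\mu}$, whereas the paper rewrites the difference as $-\int_{t_1}^{t_2}\frac{d}{dw}(w-s)^{(\alpha-1)p/(p-1)}\,dw$ and applies Fubini--Tonelli before dropping the analogous term; both yield the identical bound.
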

\begin{proof} Since for $0\leq a\leq b$ and $\eta\in[1,\infty)$ it holds that $(b-a)^\eta\leq b^\eta-a^\eta$, we may deduce that
\begin{multline}\label{stokes2501281515}\left\|\int_{0}^{{t}_1}\big[({t}_1-s)^{\alpha-1}-({t}_2-s)^{\alpha-1}\big]v(s)ds\right\|_{m}
\\\leq\left\{\int_{0}^{{t}_1}\big[({t}_1-s)^{(\alpha-1)p/(p-1)}-({t}_2-s)^{(\alpha-1)p/(p-1)}\big]ds\right\}^{(p-1)/p}\|v\|_{L^p(0,{t}_1;\mathbb{R}^m)}
\\=\left\{\int_{0}^{{t}_1}\left[-\int_{t_1}^{t_2}\dfrac{d}{dw}(w-s)^{(\alpha-1)p/(p-1)}\,dw\right]ds\right\}^{(p-1)/p}\|v\|_{L^p(0,{t}_1;\mathbb{R}^m)}.
\end{multline}

As a result, by applying Fubini-Tonelli's theorem to \eqref{stokes2501281515}, we arrive at
\begin{multline*}\left\|\dfrac{1}{\Gamma(\alpha)}\int_{0}^{{t}_1}\big[({t}_1-s)^{\alpha-1}-({t}_2-s)^{\alpha-1}\big]v(s)ds\right\|_{m}
\\\leq\dfrac{1}{\Gamma(\alpha)}\left\{\int_{t_1}^{t_2}\Big[(w-t_1)^{(\alpha-1)p/(p-1)}-w^{(\alpha-1)p/(p-1)}\Big]\,dw\right\}^{(p-1)/p}\|v\|_{L^p(0,{t}_1;\mathbb{R}^m)}
\\\leq\dfrac{1}{\Gamma(\alpha)}\left(\dfrac{p-1}{\alpha p-1}\right)^{1-(1/p)}({t}_2-{t}_1)^{\alpha-(1/p)}\|v\|_{L^p(0,{t}_1;\mathbb{R}^m)}.\end{multline*}
\end{proof}

{\begin{theorem}\label{ESNS}Let $p\in(1,\infty)$, $\alpha \in \left(1/p, 1\right)$, and $G: [0,T] \times \mathbb{R}^m \rightarrow \mathbb{R}^m$ be a Carathéodory map that satisfies \eqref{concara} for some $\gamma \in L^p(0,T)$, with $q\in[1,\infty)$, and is locally Lipschitz continuous in the second variable for almost every $t \in [0, T]$. If $\phi: [0, \tau] \to \mathbb{R}^m$ is the unique local solution of $(\ref{EDOF})$ on $[0, \tau]$, for some $\tau \in (0,T)$, then there exists a unique continuation $\phi^{*}$ of $\phi$ on some interval $[0, \tau + \tau_1]\subset[0,T)$, for some $\tau_1>0$.
	\end{theorem}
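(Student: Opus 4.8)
The plan is to reduce the continuation problem to a fresh fixed-point argument on the interval $[\tau,\tau+\tau_1]$, exactly in the spirit of Theorem \ref{SNS}, but with the essential modification forced by the nonlocality of the Caputo derivative: one cannot simply restart \eqref{EDOF} at time $\tau$ with initial datum $\phi(\tau)$, because the fractional integral retains the whole history of the solution on $[0,\tau]$. By Proposition \ref{202501291016}, seeking a continuation $\phi^*$ on $[0,\tau+\tau_1]$ is equivalent to solving \eqref{202505221115} on $[0,\tau+\tau_1]$; since $\phi^*$ must coincide with $\phi$ on $[0,\tau]$, for $t\in[\tau,\tau+\tau_1]$ I split the integral at $s=\tau$ and write
\begin{equation*}
\phi^*(t)=H(t)+\frac{1}{\Gamma(\alpha)}\int_{\tau}^{t}(t-s)^{\alpha-1}G(s,\phi^*(s))\,ds,\qquad H(t):=u_0+\frac{1}{\Gamma(\alpha)}\int_0^{\tau}(t-s)^{\alpha-1}G(s,\phi(s))\,ds.
\end{equation*}
Here $H$ is a completely determined function, since $\phi$ is already known on $[0,\tau]$, and it plays the role of the (now time-dependent) ``initial'' data carrying the memory of the past.

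The key steps are then the following. First I would establish that $H\in C([\tau,\tau+\tau_1];\mathbb{R}^m)$. For $\tau\le t_1<t_2$, extending $v:=G(\cdot,\phi(\cdot))\in L^p(0,\tau;\mathbb{R}^m)$ (legitimate by Proposition \ref{carat}) by zero on $(\tau,t_1]$ brings the difference $H(t_2)-H(t_1)$ exactly into the form controlled by Lemma \ref{auxlema}, yielding $\|H(t_2)-H(t_1)\|_m\le \Gamma(\alpha)^{-1}\big((p-1)/(\alpha p-1)\big)^{1-1/p}(t_2-t_1)^{\alpha-1/p}\|v\|_{L^p(0,\tau;\mathbb{R}^m)}$; in particular $H$ is H\"older continuous up to the endpoint $t=\tau$, and $H(\tau)=\phi(\tau)$ by the integral identity \eqref{202505221115} satisfied by $\phi$. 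Next, letting $r_1,L_1$ denote the local Lipschitz data of $G$ at the point $\phi(\tau)$, I define the operator $P\psi(t)=H(t)+\Gamma(\alpha)^{-1}\int_\tau^t(t-s)^{\alpha-1}G(s,\psi(s))\,ds$ on the nonempty closed set $K':=\{\psi\in C([\tau,\tau+\tau_1];\mathbb{R}^m):\psi(\tau)=\phi(\tau),\ \|\psi(t)-\phi(\tau)\|_m\le\beta\ \forall t\}$ with $\beta\in(0,r_1)$. Continuity of $P\psi$ follows from the continuity of $H$ together with the Hardy--Littlewood regularity of the shifted fractional integral \cite[Theorem 7]{CarFe1} applied to $G(\cdot,\psi(\cdot))\in L^p$. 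The invariance $P(K')\subset K'$ and the contraction estimate $\|P\psi_1-P\psi_2\|_\infty\le (\tau_1^\alpha L_1/\Gamma(\alpha+1))\|\psi_1-\psi_2\|_\infty$ are obtained by the same H\"older-inequality computations as in Theorem \ref{SNS}, shrinking $\tau_1>0$ (with $\tau+\tau_1<T$) so that $\tau_1^\alpha L_1/\Gamma(\alpha+1)\le 1/2$ and the invariance bound stays below $\beta$. The Banach fixed-point theorem then produces a unique $\psi^*\in K'$, and gluing $\phi^*:=\phi$ on $[0,\tau]$ with $\phi^*:=\psi^*$ on $[\tau,\tau+\tau_1]$ yields, via Proposition \ref{202501291016}, a continuation of $\phi$ on $[0,\tau+\tau_1]$.

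For uniqueness of the continuation, any other continuation $\phi^{**}$ agrees with $\phi$ on $[0,\tau]$ (by the assumed uniqueness of $\phi$) and satisfies the same integral equation on $[\tau,\tau+\tau_1]$. Subtracting and using the local Lipschitz bound gives $\|\phi^*(t)-\phi^{**}(t)\|_m\le \Gamma(\alpha)^{-1}L_1\int_\tau^t(t-s)^{\alpha-1}\|\phi^*(s)-\phi^{**}(s)\|_m\,ds$ on a neighbourhood of $\tau$ where both solutions remain in $B_{r_1}(\phi(\tau))$; the fractional Gronwall inequality \cite[Lemma 7.1.2]{HenG} forces equality there, and a standard connectedness argument propagates it to all of $[\tau,\tau+\tau_1]$.

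I expect the main obstacle to be the continuity of the history term $H$ up to the left endpoint $t=\tau$, where the kernel singularity of the past integral passes through the boundary separating the two subintervals. In the classical ($\alpha=1$) continuation theorem this difficulty is absent, since the problem can simply be restarted at $\phi(\tau)$; here it is precisely Lemma \ref{auxlema} (a quantitative, Hardy--Littlewood--type estimate) that tames this term and makes $P$ a well-defined self-map of a space of continuous functions. Once this is secured, everything else is a faithful repetition of the contraction scheme of Theorem \ref{SNS}.
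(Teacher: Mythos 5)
Your proposal is correct and is essentially the paper's own argument in a lightly repackaged form: isolating the history term $H(t)=u_0+\Gamma(\alpha)^{-1}\int_0^\tau(t-s)^{\alpha-1}G(s,\phi(s))\,ds$ and contracting only on $[\tau,\tau+\tau_1]$ is equivalent to the paper's choice of working with the operator $T$ on the set $K$ of functions frozen to equal $\phi$ on $[0,\tau]$, and your three estimates (the H\"older bound on $H(t)-H(\tau)$ via Lemma \ref{auxlema}, the Lipschitz term, and the $\|G(\cdot,\phi(\tau))\|_{L^p}$ term) are exactly the paper's $\mathcal{I}_2$, $\mathcal{I}_1$, $\mathcal{I}_3$. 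The fixed-point scheme, the use of Lemma \ref{auxlema}, and the fractional Gronwall argument for uniqueness all coincide with the paper's proof.
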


	\begin{proof}
		Let $\phi : [0, \tau] \to \mathbb{R}^{m}$ be the unique local solution of~\eqref{EDOF} on $[0, \tau]$. Let $B_r(\phi(\tau))$ denote the open ball centered at $\phi(\tau)$ with radius $r > 0$, and let $L \geq 0$ be the Lipschitz constant of $G$ associated with its local Lipschitz continuity on $B_r(\phi(\tau))$.
 Fix $\beta\in(0,r)$ and choose $\tau_{1}>0$ with $\tau+\tau_{1}\in(0,T)$, such that
		\begin{gather*}
			\dfrac{\tau_{1}^{\alpha}L}{\Gamma(\alpha+1)}\leq \dfrac{1}{3}\;\;\;\;\; \textrm{ and } \;\;\;\;\;
			\dfrac{1}{\Gamma(\alpha)}\left(\dfrac{ p-1}{\alpha p-1}\right)^{1-(1/p)}\tau_1^{\alpha-(1/p)}D\leq \dfrac{\beta}{3}, 
		\end{gather*}
		where
		{\begin{eqnarray}
		D=\max\big\{\|G(\cdot,\phi(\tau))\|_{L^{p}(0,T;\mathbb{R}^{m})},
			\|G(\cdot,\phi(\cdot))\|_{L^{p}(0,\tau;\mathbb{R}^{m})}\big\}.\nonumber
		\end{eqnarray}}
		Consider
		\begin{equation*}
			K:=\left\{\varphi\in C([0,\tau+\tau_{1}],\mathbb{R}^{m}):
			\begin{array}{c}
				\varphi(t)=\phi(t),\; \forall t\in[0,\tau]\;\; \textrm{ and } \\
				\|\varphi(t)-\phi(\tau)\|_{m}\leq \beta,\;\forall t\in[\tau,\tau+\tau_{1}]
			\end{array}\right\}
		\end{equation*}
		a nonempty and closed subset of $C([0,\tau+\tau_{1}],\mathbb{R}^{m})$ and the operator $T:K\to C([0,\tau+\tau_{1}];\mathbb{R}^{m})$ given by
		\begin{equation*}
			T\varphi(t)=u_{0}+\dfrac{1}{\Gamma(\alpha)}\int_{0}^{t}(t-s)^{\alpha-1}G(s,\varphi(s))ds.	
		\end{equation*}
		Let us prove that $T(K)\subset K$.
		\begin{itemize}
			\item[(i)] If $\psi\in K$, then $\psi(t)=\phi(t)$ in $[0,\tau]$ with $\phi$ the local solution of (\ref{EDOF}) in $[0,\tau]$. So, if $t\in[0,\tau]$
			$$T\psi(t)=u_{0}+\dfrac{1}{\Gamma(\alpha)}\int_{0}^{t}(t-s)^{\alpha-1}G(s,\phi(s))ds=T\phi(t)=\phi(t).$$
			\item[(ii)] If $\psi\in K$ and $t\in[\tau,\tau+\tau_{1}]$, then
			\begin{multline*}
				\|T\psi(t)-\phi(\tau)\|_{m}\leq \left\|\dfrac{1}{\Gamma(\alpha)}\int_{\tau}^{t}(t-s)^{\alpha-1}\big[G(s,\psi(s))-G(s,\phi(\tau))\big]ds\right\|_{m}\\
				+\left\|\dfrac{1}{\Gamma(\alpha)}\int_{0}^{\tau}\big[(t-s)^{\alpha-1}-(\tau-s)^{\alpha-1}\big]G(s,\phi(s))ds\right\|_{m}
				\\+\left\|\dfrac{1}{\Gamma(\alpha)}\int_{\tau}^{t}(t-s)^{\alpha-1}G(s,\phi(\tau))ds\right\|_{m}\\=:\mathcal{I}_1(t)+\mathcal{I}_2(t)+\mathcal{I}_3(t).
			\end{multline*}

		  We now proceed to estimate each of the functions $\mathcal{I}_1(t)$, $\mathcal{I}_2(t)$, and $\mathcal{I}_3(t)$ defined above.
          \begin{itemize}
          \item[(a)] A direct estimate shows that
          $$\mathcal{I}_1(t)\leq\dfrac{L\beta \; \tau_{1}^{\alpha}}{\Gamma(\alpha+1)};\vspace*{0.1cm}$$
          \item[(b)] Proposition \ref{carat} and Lemma \ref{auxlema} ensure
          $$\mathcal{I}_2(t)\leq\dfrac{1}{\Gamma(\alpha)}\left(\dfrac{p-1}{\alpha p-1}\right)^{1-(1/p)}\tau_1^{\alpha-(1/p)}\|G(\cdot,\phi(\cdot))\|_{L^{p}(0,\tau;\mathbb{R}^{m})};\vspace*{0.1cm}$$
          \item[(c)] Again, a direct computation guarantees 
          $$\mathcal{I}_3(t)\leq\dfrac{1}{\Gamma(\alpha)}\left(\dfrac{p-1}{\alpha p-1}\right)^{1-(1/p)}\tau_1^{\alpha-(1/p)}\|G(\cdot,\phi(\tau))\|_{L^{p}(0,T;\mathbb{R}^{m})}.$$
          \end{itemize}

          Therefore 
			\begin{equation*}
				\|T\psi(t)-\phi(\tau)\|_{m}\leq\beta,\;\;\; \text{for all}\;\; t\in[\tau,\tau+\tau_{1}].
			\end{equation*}
		\end{itemize}
		Now, let us prove that $T$ is a contraction. For $\psi_1,\psi_2\in K$ and  $t\in[0,\tau+\tau_{1}]$, it holds that
		\begin{eqnarray}
			\|T\psi_1(t)-T\psi_2(t)\|_{m}
			&\leq&\dfrac{1}{\Gamma(\alpha)} \int_{\tau}^{t} (t-s)^{\alpha-1}\|G(s,\psi_1(s))-G(s,\psi_2(s))\|_{m}\;\; ds\nonumber \\
			&\leq&\dfrac{1}{3}\sup\limits_{s\in[0,\tau+\tau_{1}]}\|\psi_1(s)-\psi_2(s)\|_{m}.\nonumber
		\end{eqnarray}

      Therefore, Banach's fixed-point theorem ensures the existence of a unique fixed point $\phi^* \in K$ for the integral equation, which is, in fact, the only continuation of $\phi$ over the interval $[0, \tau + \tau_1]$. The uniqueness of this continuation can be established by applying the same argument used at the end of the proof of Theorem~\ref{SNS}.
\end{proof}}
Next, we present an important lemma, which provides support for the theorem concerning the existence and uniqueness of the global solution to problem (\ref{EDOF}).
{\begin{lemma}\label{LESNS}
		Let $p\in(1,\infty)$, $\alpha\in\left(1/p,1\right)$, $\tau\in(0,T]$, and $G:[0,T]\times\mathbb{R}^m\rightarrow \mathbb{R}^m$  a Carath\'{e}odory map that satisfies \eqref{concara} for some $\gamma\in L^p(
		0,T)$, with $q\in[1,\infty)$. If $\phi:[0,\tau)\longrightarrow\mathbb{R}^{m}$ is a local solution of \eqref{EDOF} and belongs to $L^q(0,\tau;\mathbb{R}^m)$, then we can extend $\phi$ continuously on $[0, \tau]$ in a unique manner. Furthermore, this extension is a solution of \eqref{EDOF} on the interval $[0, \tau]$.
	\end{lemma}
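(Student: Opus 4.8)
The plan is to convert the differential problem into its integral form via Proposition~\ref{202501291016} and then lean on the Hardy–Littlewood continuity of the Riemann–Liouville integral recalled in Remark~\ref{hardy}. Since $\phi$ is a local solution on $[0,\tau)$, it is continuous on $[0,\tau)$ and, applying the equivalence of Proposition~\ref{202501291016} on each subinterval $[0,\sigma]$ with $\sigma\in(0,\tau)$, it satisfies
\begin{equation*}
\phi(t)=u_0+\frac{1}{\Gamma(\alpha)}\int_0^t(t-s)^{\alpha-1}G(s,\phi(s))\,ds,\qquad t\in[0,\tau).
\end{equation*}

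First I would upgrade the integrability of the forcing term. Because $\phi\in L^q(0,\tau;\mathbb{R}^m)$ and $G$ obeys the growth bound \eqref{concara} with $\gamma\in L^p(0,T)$ (so $\gamma\in L^p(0,\tau)$ as $\tau\le T$), Proposition~\ref{carat} guarantees that the Nemytskii image $G(\cdot,\phi(\cdot))$ belongs to $L^p(0,\tau;\mathbb{R}^m)$, integrability that holds over the whole interval and not merely on proper subintervals of $[0,\tau)$. This is precisely the role of the hypothesis $\phi\in L^q$: it prevents the forcing term from escaping $L^p$ as $t\uparrow\tau$.

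Next I would define the candidate extension by the right-hand side of the integral equation,
\begin{equation*}
\tilde\phi(t):=u_0+\frac{1}{\Gamma(\alpha)}\int_0^t(t-s)^{\alpha-1}G(s,\phi(s))\,ds,\qquad t\in[0,\tau].
\end{equation*}
Since $\alpha\in(1/p,1)$ and $G(\cdot,\phi(\cdot))\in L^p(0,\tau;\mathbb{R}^m)$, the Hardy–Littlewood theorem in the form of \cite[Theorem 7]{CarFe1} ensures that $\tilde\phi$ is (Hölder) continuous on the closed interval $[0,\tau]$; in particular $\tilde\phi(\tau)$ is well defined and $\lim_{t\uparrow\tau}\tilde\phi(t)=\tilde\phi(\tau)$. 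For $t\in[0,\tau)$ the integral identity above gives $\tilde\phi(t)=\phi(t)$, so $\tilde\phi$ is a continuous extension of $\phi$ to $[0,\tau]$. Uniqueness is then automatic: any two continuous extensions agree on the dense subset $[0,\tau)$ and therefore coincide at $\tau$ by continuity.

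Finally, to check that $\tilde\phi$ solves \eqref{EDOF} on $[0,\tau]$, I would note that $\tilde\phi=\phi$ on all of $[0,\tau)$, so the two functions differ only on the null set $\{\tau\}$ and hence $G(s,\tilde\phi(s))=G(s,\phi(s))$ for a.e. $s\in[0,\tau]$. Consequently $\tilde\phi$ satisfies
\begin{equation*}
\tilde\phi(t)=u_0+\frac{1}{\Gamma(\alpha)}\int_0^t(t-s)^{\alpha-1}G(s,\tilde\phi(s))\,ds,\qquad t\in[0,\tau],
\end{equation*}
and, being continuous on $[0,\tau]$, the converse direction of Proposition~\ref{202501291016} yields that $\tilde\phi$ is a solution of \eqref{EDOF} on $[0,\tau]$. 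The only delicate point in the whole argument is the second step, namely securing $G(\cdot,\phi(\cdot))\in L^p$ up to the endpoint $\tau$, since this is what converts an open-interval solution into one controlled at $\tau$; everything else follows directly from the continuity of the fractional integral and the equivalence of Proposition~\ref{202501291016}.
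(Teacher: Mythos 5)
Your proposal is correct, and it pivots on exactly the same key observation as the paper's proof: the hypothesis $\phi\in L^q(0,\tau;\mathbb{R}^m)$ feeds into Proposition~\ref{carat} to give $G(\cdot,\phi(\cdot))\in L^p(0,\tau;\mathbb{R}^m)$ on the \emph{whole} interval, which is what controls the solution up to the endpoint. Where the two arguments part ways is in how that $L^p$ membership is converted into continuity at $t=\tau$. The paper works directly with increments: it bounds $\|\phi(t_1)-\phi(t_2)\|_m$ by splitting the integral representation into a tail term over $[t_1,t_2]$ and a kernel-difference term over $[0,t_1]$, estimates the latter with its own Lemma~\ref{auxlema}, and obtains the explicit H\"older modulus $(t_2-t_1)^{\alpha-1/p}$, hence uniform continuity on $[0,\tau)$ and a unique continuous extension. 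You instead define the extension $\tilde\phi$ outright as $u_0+J_t^\alpha G(\cdot,\phi(\cdot))$ on the closed interval and delegate the continuity to \cite[Theorem 7]{CarFe1}, then close the loop with the converse direction of Proposition~\ref{202501291016}. Your route is shorter and cleaner at the price of relying on the external Hardy--Littlewood-type theorem as a black box; the paper's route is self-contained and, as a by-product, produces the quantitative H\"older estimate (reused implicitly elsewhere, e.g.\ in the spirit of Theorem~\ref{ESNS}). The only cosmetic caveat in your write-up is that Proposition~\ref{202501291016} is stated for $\tau\in(0,T)$ while the lemma allows $\tau=T$; the equivalence clearly persists in that endpoint case, but it is worth a word when invoking it.
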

	\begin{proof}
If $\phi$ is a solution of (\ref{EDOF}) on the interval $[0,\tau)$, for $t_1,t_2 \in [0,\tau)$, where $t_2 > t_1 > 0$, we have
\begin{multline*}
    \|\phi(t_1)-\phi(t_2)\|_{m} \leq \dfrac{1}{\Gamma(\alpha)} \int_{t_1}^{t_2} (t_2-s)^{\alpha-1} \|G(s,\phi(s))\|_m\,ds \\
    + \int_{0}^{t_1} \big[(t_1-s)^{\alpha-1}-(t_2-s)^{\alpha-1}\big] \|G(s,\phi(s))\|_m\,ds.
\end{multline*}
Therefore, by Proposition \ref{carat} and Lemma \ref{auxlema}, we conclude that
$$
			\|\phi(t_1)-\phi(t_2)\|_{m}\leq \dfrac{2}{\Gamma(\alpha)}\left(\dfrac{p-1}{\alpha p-1}\right)^{1-(1/p)}(t_2-t_1)^{\alpha-(1/p)}\|G(\cdot,\phi(\cdot))\|_{L^{p}(0,\tau;\mathbb{R}^{m})}.
$$

This proves that $\phi$ is uniformly continuous in $[0,\tau)$. Therefore we may extend it uniquely to $[0,\tau]$ and conclude that it is a solution of \eqref{EDOF} on the interval $[0, \tau]$.
\end{proof}}
%
{\begin{theorem}\label{stokes202501290918}
		Let $p\in(1,\infty)$, $\alpha\in\left(1/p,1\right)$, $G:[0,T]\times\mathbb{R}^m\rightarrow \mathbb{R}^m$ a Carath\'{e}odory map that satisfies \eqref{concara} for some $\gamma\in L^p(0,T)$, with $q=p$ and is locally Lipschitz continuous in the second variable for almost every $t \in [0, T]$. Then, problem \eqref{EDOF} has a unique global solution in $[0,T]$.
	\end{theorem}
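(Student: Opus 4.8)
The plan is to build the maximal local solution, establish a uniform a priori bound on it using the linear growth afforded by the hypothesis $q = p$, and then exclude finite-time breakdown by combining Lemma \ref{LESNS} with the continuation result of Theorem \ref{ESNS}. First I would apply Theorem \ref{SNS} to obtain a unique local solution on some interval $[0,\tau_0]$, and then set
$$\tau^* := \sup\{\tau \in (0,T] : \eqref{EDOF} \text{ admits a unique local solution on } [0,\tau]\}.$$
By the uniqueness of continuations in Theorem \ref{ESNS}, any two such solutions agree on the intersection of their domains, so they glue together into a single local solution $\phi$ defined on $[0,\tau^*)$.

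The heart of the argument is a uniform a priori estimate on $\phi$. Since $\phi$ satisfies the integral equation \eqref{202505221115}, the growth condition \eqref{concara} with $q = p$ yields the linear bound
$$\|\phi(t)\|_m \le \|u_0\|_m + \frac{1}{\Gamma(\alpha)}\int_0^t (t-s)^{\alpha-1}\gamma(s)\,ds + \frac{C}{\Gamma(\alpha)}\int_0^t (t-s)^{\alpha-1}\|\phi(s)\|_m\,ds.$$
The inhomogeneous term is controlled uniformly on $[0,T]$ exactly as in the proof of Theorem \ref{SNS}: since $\alpha > 1/p$, the kernel $(t-\cdot)^{\alpha-1}$ lies in $L^{p/(p-1)}(0,t)$, so Hölder's inequality produces a constant
$$A := \|u_0\|_m + \frac{1}{\Gamma(\alpha)}\left(\frac{p-1}{\alpha p - 1}\right)^{1-(1/p)} T^{\alpha-(1/p)}\|\gamma\|_{L^p(0,T;\mathbb{R})}.$$
Hence $\|\phi(t)\|_m \le A + \frac{C}{\Gamma(\alpha)}\int_0^t (t-s)^{\alpha-1}\|\phi(s)\|_m\,ds$ for all $t \in [0,\tau^*)$, and the fractional Gronwall inequality \cite[Lemma 7.1.2]{HenG} upgrades this to a uniform bound $\|\phi(t)\|_m \le M$ on $[0,\tau^*)$, with $M$ depending only on $A$, $C$, $\alpha$, and $T$ through a Mittag--Leffler factor.

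Finally I would conclude by contradiction. Suppose $\tau^* \le T$. The uniform bound shows $\phi \in L^\infty(0,\tau^*;\mathbb{R}^m) \subset L^q(0,\tau^*;\mathbb{R}^m)$, because the interval is finite; Lemma \ref{LESNS} then extends $\phi$ continuously and uniquely to $[0,\tau^*]$ as a genuine solution of \eqref{EDOF}. If $\tau^* < T$, Theorem \ref{ESNS} furnishes a continuation on some $[0,\tau^*+\tau_1] \subset [0,T)$, contradicting the maximality of $\tau^*$; hence $\tau^* = T$, and the same application of Lemma \ref{LESNS} (now with $\tau = T$) delivers a solution on the closed interval $[0,T]$. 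Uniqueness on $[0,T]$ follows from the local uniqueness in Theorem \ref{SNS} together with the uniqueness of continuations in Theorem \ref{ESNS}. The main obstacle is precisely this a priori estimate: it is the hypothesis $q = p$ that reduces the growth of $G$ to the linear regime in which the fractional Gronwall inequality prevents finite-time blow-up, whereas genuinely superlinear growth ($q > p$) could destroy the uniform bound and allow the maximal time to fall short of $T$.
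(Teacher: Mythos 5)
Your proposal is correct and follows essentially the same route as the paper: define $\tau^*$ as the supremum of times admitting a unique local solution, derive the linear integral inequality from \eqref{concara} with $q=p$, apply the fractional Gronwall inequality of \cite[Lemma 7.1.2]{HenG} to get an a priori $L^\infty$ bound on $[0,\tau^*)$, and then rule out $\tau^*<T$ via Lemma \ref{LESNS} and Theorem \ref{ESNS} before extending to the closed interval $[0,T]$ with a final application of Lemma \ref{LESNS}. Even the explicit constant in the inhomogeneous term matches the paper's $M$, so there is nothing substantive to add.
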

	\begin{proof}
		Consider 
		\begin{equation*}
			H:=\big\{\tau\in(0,T]:\textrm{exists a unique }\phi_{\tau}:[0,\tau]\to \mathbb{R}^n
				\textrm{ that is a local solution of (\ref{EDOF}) on }[0,\tau] 
			\big\}.
		\end{equation*}
		Note that Theorem \ref{SNS} guarantees that $H$ is nonempty. Therefore, if we denote $\tau^* = \sup H$, we can define a continuous function $\phi:[0,\tau^*)\to \mathbb{R}^n$, which is the unique solution of problem \eqref{EDOF} in $[0,\tau]$, for all $\tau\in(0,\tau^*)$. Now notice that due to \eqref{concara} we have
\begin{multline*}\|\phi(t)\|_m\leq \underbrace{\|u_0\|_m+\dfrac{1}{\Gamma(\alpha)}\left(\dfrac{p-1}{\alpha p-1}\right)^{1-(1/p)}T^{\alpha-(1/p)}\|\gamma\|_{L^p(0,T)}}_{:=M}+\dfrac{C}{\Gamma(\alpha)}\int_{0}^{t}(t-s)^{\alpha-1}\big\|\phi(s)\|_m \,ds,\end{multline*}
for all $t\in[0,\tau^*)$. But then, since $\phi\in L^1_{loc}(0,\tau^*;\mathbb{R}^m)$, the fractional version of Gronwall's inequality (cf. \cite[Lemma 7.1.2]{HenG}) ensures that 
$$\|\phi(t)\|_m\leq ME_\alpha(C^{1/\alpha} t),$$ 
for all $t \in [0, \tau^*)$, where $E_\alpha(z)$ denotes the Mittag-Leffler function (see \cite{GoKiMaRo1} for more details on the one-parameter Mittag-Leffler functions). From this, we conclude that $\phi \in L^\infty(0, \tau^*; \mathbb{R}^m)$.

Consequently, we can deduce that $\tau^* = T$. Indeed, if this were not the case, that is, if $\tau^* < T$, then Theorem~\ref{ESNS} and Lemma~\ref{LESNS} would ensure the existence and uniqueness of a solution defined on an interval properly containing $[0, \tau^*]$, contradicting the definition of $\tau^*$. Therefore, since $\tau^* = T$, we may apply Lemma~\ref{LESNS} once more to ensure the existence of a unique extension of $\phi$ on $[0, T]$, as desired.
\end{proof}}

\begin{remark}
It is worth noting that Theorem \ref{stokes202501290918} becomes significantly more challenging to prove when $q \neq p$, as extending Gronwall's inequality to this scenario results in a far more complex task. Since such an improvement does not contribute to our objectives, we have opted not to address this generalization here.
\end{remark}

\section{The $L^p_\alpha(0,T;X)$ Spaces}
\label{fracspace}

As mentioned in the introduction, it is essential to discuss a new subspace of the Bochner--Lebesgue spaces, which plays a fundamental role in the main results of this work. Therefore, to keep the setting as general as possible, we assume throughout this section that $X$ is a Banach space and $T > 0$.

\begin{definition} \label{202503061950}
For $\alpha \in (0,1]$ and $p \in [1, \infty)$, the space of $(\alpha, p)$-Bochner integrable functions is denoted by $L^p_\alpha(0, T; X)$ and consists of all Bochner measurable functions $v : [0, T] \to X$ such that
$$
\int_0^T \frac{(T - s)^{\alpha - 1}}{\Gamma(\alpha)} \|v(s)\|_X^p \, ds < \infty.
$$
For completeness, we also define $L^p_0(0, T; X) := L^\infty(0, T; X)$ for any $p \in [1, \infty)$.
\end{definition}

\begin{remark} 
\begin{itemize}
\item[(i)] It is interesting to note that the choice of the interval $[0,T]$, in the definition of the sets $L^p_\alpha(0,T;X)$, was made solely for the purposes of the studies developed in this manuscript. Nevertheless, replacing $[0,T]$ with any other interval $[t_0,t_1]$ does not affect the results obtained in this section, except for slight changes in the constants involved in the estimates.\vspace*{0.2cm}
\item[(ii)] Observe that this definition could be extended to $\alpha>1$. However, since it falls outside the scope of the results we are proving in this manuscript, we prefer not to discuss this case at this moment, leaving such an analysis for a future related work.
\end{itemize}
\end{remark}

Let us begin our study of $L^p_\alpha(0,T;X)$ by proving that each of these sets forms a normed vector space, which is also a subspace of $L^p(0,T;X)$.

\begin{proposition}\label{202503051659} Let $\alpha\in[0,1]$ and $p\in[1,\infty)$. Then $L^p_1(0,T;X)=L^p(0,T;X)$ and $L^p_0(0,T;X)=L^\infty(0,T;X)$, both of which are subspaces of $L^p(0,T;X)$. On the other hand, for $\alpha\in(0,1)$, $L^p_\alpha(0,T;X)$ is a vector subspace of $L^p(0,T;X)$.
	\end{proposition}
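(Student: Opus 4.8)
The plan is to dispose of the two boundary cases $\alpha=1$ and $\alpha=0$ by directly comparing the defining integral with the norm of $L^p(0,T;X)$ (respectively $L^\infty$), and then to treat the genuinely fractional range $\alpha\in(0,1)$ by showing two things: that $L^p_\alpha(0,T;X)$ is closed under the vector-space operations, and that it sits inside $L^p(0,T;X)$.

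For the case $\alpha=1$, I would simply observe that the weight reduces to $(T-s)^0/\Gamma(1)=1$, so the defining condition
$$\int_0^T \frac{(T-s)^{1-1}}{\Gamma(1)}\|v(s)\|_X^p\,ds=\int_0^T\|v(s)\|_X^p\,ds<\infty$$
is verbatim the condition for membership in $L^p(0,T;X)$; hence the two sets coincide as claimed. The case $\alpha=0$ is true by the definitional convention $L^p_0(0,T;X):=L^\infty(0,T;X)$, and the elementary inclusion $L^\infty(0,T;X)\subset L^p(0,T;X)$ on the finite-measure interval $[0,T]$ (via $\|v\|_{L^p}\le T^{1/p}\|v\|_{L^\infty}$) gives that this is a subspace.

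For $\alpha\in(0,1)$, I would first verify that $L^p_\alpha(0,T;X)$ is a vector space. Measurability is preserved under scalar multiples and sums, so the only issue is finiteness of the weighted integral. Homogeneity, $\int_0^T (T-s)^{\alpha-1}\|\lambda v(s)\|_X^p\,ds=|\lambda|^p\int_0^T (T-s)^{\alpha-1}\|v(s)\|_X^p\,ds$, handles scalars. For sums I would use the elementary convexity bound $\|a+b\|_X^p\le 2^{p-1}(\|a\|_X^p+\|b\|_X^p)$ pointwise, so that the weighted integral of $\|v+w\|_X^p$ is controlled by $2^{p-1}$ times the sum of the two finite weighted integrals; this shows $v+w\in L^p_\alpha(0,T;X)$.

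The only real content is the inclusion $L^p_\alpha(0,T;X)\subset L^p(0,T;X)$ for $\alpha\in(0,1)$, and this is where I expect the main (though still mild) obstacle to lie. The difficulty is that the weight $(T-s)^{\alpha-1}$ \emph{degenerates to $0$} as $s\to T$ (since $\alpha-1<0$ means... in fact it blows up at $s=T$, not at $s=0$), so near $s=T$ the weighted integral is \emph{smaller} than the unweighted one and provides no control there. I would therefore split $[0,T]=[0,T/2]\cup[T/2,T]$. On $[0,T/2]$ the weight is bounded below by $(T/2)^{\alpha-1}$, so $\int_0^{T/2}\|v\|_X^p\,ds\le (T/2)^{1-\alpha}\Gamma(\alpha)\int_0^{T/2}\frac{(T-s)^{\alpha-1}}{\Gamma(\alpha)}\|v\|_X^p\,ds<\infty$. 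The region $[T/2,T]$ is the delicate one; here I would apply Hölder's inequality with a conjugate exponent chosen so that the negative power of the weight becomes integrable: writing $\|v\|_X^p=\big[(T-s)^{\alpha-1}\|v\|_X^p\big]^{1/r}\cdot (T-s)^{(1-\alpha)/r}\|v\|_X^{p/r'}$ and optimizing, the point is that $(T-s)^{\alpha-1}$ is locally integrable near $T$ precisely because $\alpha-1>-1$, so a Hölder split lets the finiteness of the weighted integral together with integrability of a fixed power of the weight force $\int_{T/2}^T\|v\|_X^p\,ds<\infty$. Adding the two pieces yields $v\in L^p(0,T;X)$, completing the inclusion.
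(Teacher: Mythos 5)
The boundary cases and the vector-space closure (homogeneity plus the pointwise bound $\|a+b\|_X^p\le 2^{p-1}(\|a\|_X^p+\|b\|_X^p)$) are fine and match the paper. The problem is the step you yourself identify as ``the only real content,'' the inclusion $L^p_\alpha(0,T;X)\subset L^p(0,T;X)$: there you have the geometry of the weight backwards, and the argument you build on that picture does not work. Since $\alpha-1<0$ and $T-s\le T$ on $[0,T]$, the weight satisfies $(T-s)^{\alpha-1}\ge T^{\alpha-1}>0$ \emph{everywhere}; it blows up as $s\to T$ and attains its minimum at $s=0$. So the weighted integrand dominates the unweighted one up to the fixed constant $T^{1-\alpha}$ on the whole interval, there is no delicate region, and the inclusion is the one-line estimate the paper gives:
\begin{equation*}
\int_0^T\|v(s)\|_X^p\,ds=\int_0^T (T-s)^{1-\alpha}(T-s)^{\alpha-1}\|v(s)\|_X^p\,ds\le T^{1-\alpha}\int_0^T(T-s)^{\alpha-1}\|v(s)\|_X^p\,ds<\infty.
\end{equation*}

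Concretely, two things in your version fail. First, on $[0,T/2]$ you claim the weight is bounded below by $(T/2)^{\alpha-1}$ and deduce $\int_0^{T/2}\|v\|_X^p\,ds\le (T/2)^{1-\alpha}\Gamma(\alpha)\int_0^{T/2}\frac{(T-s)^{\alpha-1}}{\Gamma(\alpha)}\|v\|_X^p\,ds$; this requires $(T-s)^{1-\alpha}\le (T/2)^{1-\alpha}$, i.e.\ $T-s\le T/2$, which is false on $[0,T/2]$ (the correct constant there is $T^{1-\alpha}$). Second, on $[T/2,T]$ the Hölder split is circular: applying Hölder to $\|v\|_X^p=\bigl[(T-s)^{\alpha-1}\|v\|_X^p\bigr]^{1/r}\cdot(T-s)^{(1-\alpha)/r}\|v\|_X^{p/r'}$ leaves a factor of the form $\bigl(\int (T-s)^{(1-\alpha)r'/r}\|v\|_X^p\,ds\bigr)^{1/r'}$, which is controlled only by the quantity $\int\|v\|_X^p\,ds$ you are trying to prove finite; you cannot divide through without already knowing finiteness. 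The local integrability of $(T-s)^{\alpha-1}$ near $s=T$ is irrelevant here (it matters for the reverse-type inclusions $L^q\subset L^p_\alpha$ proved later in the paper, not for $L^p_\alpha\subset L^p$). Replacing your split-and-Hölder argument by the uniform lower bound $(T-s)^{\alpha-1}\ge T^{\alpha-1}$ repairs the proof.
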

	
\begin{proof}
Assume that $\alpha\in(0,1)$. First, observe that for any $v\in L_\alpha^p(0,T;X)$, we have
\begin{equation*}\int_0^T \|v(s)\|^p\,ds=\int_0^T (T-s)^{1-\alpha}(T-s)^{\alpha-1}\|v(s)\|^p\,ds\\\leq \Gamma(\alpha)T^{1-\alpha}\|v\|_{L^p_\alpha(0,T;X)}<\infty.\end{equation*}
Thus, $L^p_\alpha(0,T;X)$ is contained in $L^p(0,T;X)$. Now, let $\lambda\in\mathbb{R}$ and $v_1,v_2 \in L^p_\alpha(0,T;X)$. Then,
\begin{multline*}
	\int_0^T \frac{(T-s)^{\alpha-1}}{\Gamma(\alpha)} \|\lambda v_1(s) + v_2(s)\|_{X}^p \, ds 
	\leq {2^{p-1}\max\{1,|\lambda|^p\}} \left[\int_0^T \frac{(T-s)^{\alpha-1}}{\Gamma(\alpha)} \|v_1(s)\|_{X}^{p} \, ds \right. \nonumber\\
	+ \left. \int_0^T \frac{(T-s)^{\alpha-1}}{\Gamma(\alpha)} \|v_2(s)\|_{X}^p \, ds\right]<\infty.\nonumber
\end{multline*}
Hence, $\lambda v_1+v_2\in L^p_\alpha(0,T;X)$, which proves that $L^p_\alpha(0,T;X)$ is a vector subspace of $L^p(0,T;X)$.
\end{proof}

Now, we define an appropriate norm for the vector space $L^p_\alpha(0,T;X)$.

{\begin{proposition} For $\alpha\in[0,1]$ and $p\in[1,\infty)$, consider $\|\cdot\|_{L^p_\alpha(0,T;X)}:L^p_\alpha(0,T;X)\rightarrow\mathbb{R}$ given by
		$$\|v\|_{L^p_\alpha(0,T;X)}:=\left\{\begin{array}{ll}\|v\|_{L^\infty(0,T;X)},&\textrm{if }\alpha=0,\vspace*{0.2cm}\\
\left(\displaystyle\int_0^T \frac{(T-s)^{\alpha-1}}{\Gamma(\alpha)} \|v(s)\|_{X}^{p}  ds\right)^{1/p},&\textrm{if }\alpha\in(0,1].\end{array}\right.$$
		With this, $\Big(L^p_\alpha(0,T;X), \|\cdot\|_{L^p_\alpha(0,T;X)}\Big)$ defines a normed vector space.
\end{proposition}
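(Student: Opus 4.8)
The plan is to reduce the verification of the norm axioms to the classical theory of scalar $L^p$ spaces taken with respect to a suitable weighted measure. The cases $\alpha = 0$ and $\alpha = 1$ require essentially no work, since the functional coincides with the standard $L^\infty(0,T;X)$ and $L^p(0,T;X)$ norms, respectively, both of which are already known to be norms. Thus I would concentrate on the range $\alpha \in (0,1)$.

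For $\alpha \in (0,1)$, I would first introduce the weight $w(s) := (T-s)^{\alpha-1}/\Gamma(\alpha)$, which is strictly positive for almost every $s \in [0,T)$ and integrable on $[0,T]$, since $\int_0^T (T-s)^{\alpha-1}\,ds = T^\alpha/\alpha < \infty$. This weight defines a finite Borel measure $d\mu := w(s)\,ds$ that is mutually absolutely continuous with Lebesgue measure on $[0,T)$; in particular $\mu$ and Lebesgue measure share the same null sets. The crucial observation is then that, because $v$ is Bochner measurable, the scalar function $s \mapsto \|v(s)\|_X$ is Lebesgue (hence $\mu$-) measurable, and the candidate norm is nothing but the standard scalar $L^p(\mu)$ norm of the nonnegative function $\|v(\cdot)\|_X$:
$$\|v\|_{L^p_\alpha(0,T;X)} = \left(\int_0^T \|v(s)\|_X^p \, d\mu(s)\right)^{1/p} = \big\| \, \|v(\cdot)\|_X \, \big\|_{L^p(\mu)}.$$

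With this identification in place, the four axioms follow directly. Non-negativity and absolute homogeneity $\|\lambda v\| = |\lambda|\,\|v\|$ are immediate upon factoring the constant out of the integral. For definiteness, $\|v\|_{L^p_\alpha} = 0$ forces $\|v(s)\|_X = 0$ for $\mu$-almost every $s$, and since $\mu$ has the same null sets as Lebesgue measure, this gives $v(s) = 0$ for almost every $s$, i.e. $v$ is the zero element. For the triangle inequality I would combine the pointwise inequality $\|v_1(s) + v_2(s)\|_X \le \|v_1(s)\|_X + \|v_2(s)\|_X$ with monotonicity of the $L^p(\mu)$ norm and then Minkowski's inequality in the scalar space $L^p(\mu)$:
$$\|v_1 + v_2\|_{L^p_\alpha} \le \big\| \, \|v_1(\cdot)\|_X + \|v_2(\cdot)\|_X \, \big\|_{L^p(\mu)} \le \|v_1\|_{L^p_\alpha} + \|v_2\|_{L^p_\alpha}.$$

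Since Proposition \ref{202503051659} already guarantees that $L^p_\alpha(0,T;X)$ is a vector space on which this functional is finite, nothing further is needed. There is essentially no hard step: the entire content of the statement is the recognition that the weighted integral is a genuine $L^p$ norm against the finite measure $d\mu$, after which Minkowski's inequality does all the work. The only point meriting a line of care is the equivalence of null sets between $\mu$ and Lebesgue measure, required for the definiteness axiom, which holds precisely because the weight $w$ is strictly positive almost everywhere on $[0,T)$.
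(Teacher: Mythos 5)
Your proof is correct, and it takes a mildly but genuinely different route from the paper's. The paper treats the triangle inequality by hand: it writes $\|v_1(s)+v_2(s)\|_X^p = \|v_1(s)+v_2(s)\|_X\,\|v_1(s)+v_2(s)\|_X^{p-1}$, distributes the weight $(T-s)^{\alpha-1}/\Gamma(\alpha)$ with exponents $1/p$ and $(p-1)/p$, applies H\"older's inequality to each of the two resulting terms, and then divides through by $\|v_1+v_2\|_{L^p_\alpha}^{p-1}$ --- in effect reproving Minkowski's inequality from scratch in this weighted setting. You instead observe at the outset that the functional is exactly the scalar $L^p(\mu)$ norm of $s\mapsto\|v(s)\|_X$ for the finite measure $d\mu = (T-s)^{\alpha-1}\,ds/\Gamma(\alpha)$, and then cite Minkowski's inequality in $L^p(\mu)$. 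Your route is shorter and makes the structural reason for the result transparent; it also quietly sidesteps the one delicate point in the paper's argument, namely that dividing by $\|v_1+v_2\|_{L^p_\alpha}^{p-1}$ requires knowing this quantity is finite (which the paper gets from its Proposition on the vector space structure) and handling the case where it vanishes. The paper's computation, on the other hand, is fully self-contained and does not presuppose the weighted scalar $L^p$ theory. Your remark that $\mu$ and Lebesgue measure share null sets, needed for definiteness, matches the paper's observation that the weight is positive almost everywhere. Both arguments correctly dispose of $\alpha=0$ and $\alpha=1$ as the classical cases.
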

	\begin{proof} Assume that $\alpha\in(0,1)$. 
\begin{itemize}
\item[(i)] If $v \in L^p_\alpha(0,T;X)$ and $\|v\|_{L^p_\alpha(0,T;X)} = 0$, since $(T-s)^{\alpha-1} \|v(s)\|_{X}^{p} \geq 0$ for almost every $s \in [0,T]$, it follows that $\|v(s)\|_{X}^{p} = 0$ for almost every $s \in [0,T]$. Therefore $v=0$.\vspace*{0.2cm}
\item[(ii)] It is straightforward to verify that for $v \in L^p_\alpha(0,T;X)$ and $\lambda \in \mathbb{R}$, we have $\|\lambda v\|_{L^p_\alpha(0,T;X)}  = |\lambda| \|v\|_{L^p_\alpha(0,T;X)}$.\vspace*{0.2cm}
\item[(iii)] For $v_1, v_2 \in L^p_\alpha(0,T;X)$ we have
		\begin{multline*}
			\qquad J^\alpha_t \|v_1(t)+v_2(t)\|_{X}^p\Big|_{t=T} \\
			= \frac{1}{\Gamma(\alpha)} \int_0^T (T-s)^{(\alpha-1)/p} \|v_1(s)+v_2(s)\|_{X} (T-s)^{(\alpha-1)(p-1)/p} \|v_1(s)+v_2(s)\|_{X}^{p-1} ds,
		\end{multline*}
        what ensures that
        \begin{multline*}
		\qquad J^\alpha_t \|v_1(t)+v_2(t)\|_{X}^p\Big|_{t=T} \\\leq
        \int_0^T \left[ \frac{(T-s)^{\alpha-1}}{\Gamma(\alpha)} \right]^{1/p} \|v_1(s)\|_{X} \left[ \frac{(T-s)^{\alpha-1}}{\Gamma(\alpha)} \right]^{\frac{p-1}{p}} \|v_1(s)+v_2(s)\|_{X}^{p-1} ds\\
		+\int_0^T \left[ \frac{(T-s)^{\alpha-1}}{\Gamma(\alpha)} \right]^{1/p} \|v_2(s)\|_{X} \left[ \frac{(T-s)^{\alpha-1}}{\Gamma(\alpha)} \right]^{\frac{p-1}{p}} \|v_1(s)+v_2(s)\|_{X}^{p-1} ds.
		\end{multline*}
        Thus, Hölder's inequality allows us to achieve that
		\begin{multline*}
		\qquad	J^\alpha_t \|v_1(t)+v_2(t)\|_{X}^p\Big|_{t=T} \\\leq  \left( \int_0^T \frac{(T-s)^{\alpha-1}}{\Gamma(\alpha)} \|v_1(s)\|_{X}^{p}  ds \right)^{\frac{1}{p}}  \left( \int_0^T \frac{(T-s)^{\alpha-1}}{\Gamma(\alpha)} \|v_1(s)+v_2(s)\|_{X}^p ds \right)^{\frac{p-1}{p}}
			\\+ \left( \int_0^T \frac{(T-s)^{\alpha-1}}{\Gamma(\alpha)} \|v_2(s)\|_{X}^p ds \right)^{\frac{1}{p}}  \left( \int_0^T \frac{(T-s)^{\alpha-1}}{\Gamma(\alpha)} \|v_1(s)+v_2(s)\|_{X}^p ds \right)^{\frac{p-1}{p}}.
		\end{multline*}
        Therefore
		\begin{equation} \label{202503051650}
			\| v_1+v_2 \|_{L^p_\alpha(0,T;X)}^p \leq \big [ \|v_1 \|_{L^p_\alpha(0,T;X)} + \|v_2 \|_{L^p_\alpha(0,T;X)} \big ] \| v_1+v_2 \|^{p-1}_{L^p_\alpha(0,T;X)}.
		\end{equation}
		It follows now directly from \eqref{202503051650} that
		\begin{equation*}
			\| v_1+v_2 \|_{L^p_\alpha(0,T;X)} \leq \big  \|v_1 \|_{L^p_\alpha(0,T;X)} + \|v_2 \|_{L^p_\alpha(0,T;X)},
		\end{equation*}
		completing the proof that $\| \cdot \|_{L^p_\alpha(0,T;X)}$ defines a norm in $L^p_\alpha(0,T;X)$.
\end{itemize}
\end{proof}}

Now, in order to establish some relationships between $L^p_\alpha(0,T;X)$ for different values of $\alpha\in[0,1]$ and $p\in[1,\infty)$, as well as its connection with the classical $L^p(0,T;X)$, we present the following series of results.

\begin{proposition}\label{202501291512} Let $\alpha,\beta\in[0,1]$, with $\alpha<\beta$, and $p\in[1,\infty)$. Then, $L^p_\alpha(0,T;X) \subsetneq L^p_\beta(0,T;X)$, and for all $f\in L^p_\alpha(0,T;X)$ it holds that
\begin{equation}\label{202503051621}
\|v\|_{L^p_\beta(0,T;X)}\leq\left\{\begin{array}{ll}\left(\dfrac{T^{\beta}}{\Gamma(\beta+1)}\right)^{1/p}\|v\|_{L^p_\alpha(0,T;X)},&\textrm{ for }\alpha=0,\vspace*{0.3cm}\\
\left(\dfrac{T^{\beta-\alpha}\Gamma(\alpha)}{\Gamma(\beta)}\right)^{1/p}\|v\|_{L^p_\alpha(0,T;X)},&\textrm{ for }\alpha\in(0,1].\end{array}\right.
\end{equation}
\end{proposition}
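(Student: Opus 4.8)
The plan is to treat the two regimes $\alpha=0$ and $\alpha\in(0,1)$ separately, since the norm is defined differently in each, and to extract both the inclusion and the quantitative bound \eqref{202503051621} from a single pointwise comparison of the two weights. The strictness of the inclusion will then follow from one explicit family of examples.

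First I would dispose of the main case $\alpha\in(0,1)$ (which also captures $\beta=1$). The core observation is the pointwise inequality
\[
(T-s)^{\beta-1}=(T-s)^{\beta-\alpha}(T-s)^{\alpha-1}\leq T^{\beta-\alpha}(T-s)^{\alpha-1},\qquad s\in[0,T],
\]
valid precisely because $\beta-\alpha>0$ forces $(T-s)^{\beta-\alpha}\leq T^{\beta-\alpha}$. Multiplying by the nonnegative quantity $\|v(s)\|_X^p$, dividing by $\Gamma(\beta)$, and integrating over $[0,T]$ gives
\[
\|v\|_{L^p_\beta(0,T;X)}^p\leq\frac{T^{\beta-\alpha}}{\Gamma(\beta)}\int_0^T(T-s)^{\alpha-1}\|v(s)\|_X^p\,ds=\frac{T^{\beta-\alpha}\Gamma(\alpha)}{\Gamma(\beta)}\,\|v\|_{L^p_\alpha(0,T;X)}^p,
\]
which is exactly the second line of \eqref{202503051621}; in particular finiteness of the right-hand side forces $v\in L^p_\beta(0,T;X)$, yielding the inclusion. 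When $\beta=1$ one has $\Gamma(\beta)=1$ and $L^p_1=L^p$, so this is consistent with Proposition \ref{202503051659}.

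Next I would handle the endpoint $\alpha=0$, where $v\in L^p_0(0,T;X)=L^\infty(0,T;X)$ and hence $\|v(s)\|_X\leq\|v\|_{L^\infty(0,T;X)}$ for almost every $s$. Then
\[
\|v\|_{L^p_\beta(0,T;X)}^p=\frac{1}{\Gamma(\beta)}\int_0^T(T-s)^{\beta-1}\|v(s)\|_X^p\,ds\leq\frac{\|v\|_{L^\infty(0,T;X)}^p}{\Gamma(\beta)}\int_0^T(T-s)^{\beta-1}\,ds,
\]
and since $\int_0^T(T-s)^{\beta-1}\,ds=T^\beta/\beta$ together with $\beta\Gamma(\beta)=\Gamma(\beta+1)$, this produces the first line of \eqref{202503051621} and the inclusion $L^\infty\subseteq L^p_\beta$. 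For the strictness I would fix a unit vector $x_0\in X$ and set $v(s)=(T-s)^{-\gamma/p}x_0$, so that $\|v(s)\|_X^p=(T-s)^{-\gamma}$ and membership in $L^p_\delta$ is governed by convergence of $\int_0^T(T-s)^{\delta-1-\gamma}\,ds$, i.e.\ by the condition $\delta>\gamma$. Taking $\gamma=\alpha$ when $\alpha\in(0,1)$ (respectively any $\gamma\in(0,\beta)$ when $\alpha=0$, which also makes $v$ unbounded hence not in $L^\infty$) places $v$ in $L^p_\beta$ while the corresponding integral for $L^p_\alpha$ diverges, establishing $L^p_\alpha(0,T;X)\subsetneq L^p_\beta(0,T;X)$.

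The analytic content is elementary, resting entirely on the monotonicity $(T-s)^{\beta-\alpha}\leq T^{\beta-\alpha}$ and on standard power-type integrals. I expect the only delicate point to be the bookkeeping across the parameter ranges: verifying that the single choice of exponent $\gamma$ separates the two spaces in every admissible configuration $\alpha<\beta$, and treating the degenerate endpoint $\alpha=0$ with the $L^\infty$ norm rather than the integral weight.
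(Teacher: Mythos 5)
Your proposal is correct and follows essentially the same route as the paper: the same pointwise weight comparison $(T-s)^{\beta-1}\leq T^{\beta-\alpha}(T-s)^{\alpha-1}$ for the inclusion and norm bound, and the same family of power-type singularities at $s=T$ for strictness (you take $\gamma=\alpha$, the paper takes $\gamma\in(\alpha,\beta)$; both work). The only cosmetic difference is that for $\alpha=0$ you compute the constant directly rather than invoking an external continuity theorem, and you place the singularity of the counterexample at $T$ rather than at $0$, neither of which changes the substance.
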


\begin{proof} First, let $\alpha = 0$. If $v\in L^p_0(0,T;X)$, it follows from \cite[Theorem 7]{CarFe1} that $J_t^\beta \|v(t)\|_X^p \in C([0,T];\mathbb{R})$. Consequently, we obtain $L^p_0(0,T;X) \subset L^p_\beta(0,T;X)$ for all $\beta\in(0,1]$, and prove the desired inequality. To show that this inclusion is strict, consider $v(t) = t^{-\beta/p}$, which belongs to $L^p_\beta(0,T;\mathbb{R})$ but not to $L^p_0(0,T;\mathbb{R})$.

Now, assume that $0< \alpha < \beta \leq 1$. For any $v\in L^p_\alpha(0,T;X)$, we have
\begin{multline*}
\int_0^T(T-s)^{\beta-1}\|v(s)\|_X^p\,ds = \int_0^T(T-s)^{\beta-\alpha} (T-s)^{\alpha-1}\|v(s)\|_X^p\,ds \\
\leq T^{\beta-\alpha}\int_0^T(T-s)^{\alpha-1}\|v(s)\|_X^p\,ds.
\end{multline*}
This implies the norm bound \eqref{202503051621}, which establishes the inclusion $L^p_\alpha(0,T;X) \subset L^p_\beta(0,T;X)$ for $0< \alpha < \beta \leq 1$. 

Finally, to verify that this inclusion is strict, consider $v(t)=(T-t)^{-\gamma/p}$ for any $\gamma\in(\alpha,\beta)$. In this case, we have that $v\in L^p_\beta(0,T;\mathbb{R})$ but $v\not\in L^p_\alpha(0,T;\mathbb{R})$.

\end{proof}

\begin{proposition}
		
		Let $\alpha\in(0,1]$ and $p,q\in[1,\infty)$, with $p< q$. Then $L^q_\alpha(0,T;X) \subsetneq L_{\alpha}^{p}(0,T;X)$, and for all $v\in L^p_\alpha(0,T;X)$ it holds that 
		\begin{equation*} 
			\| v \|_{L_{\alpha}^{q}(0,T;X)} \leq \left(\frac{T^{\alpha}}{\Gamma(\alpha+1)}\right)^{\frac{q-p}{pq}} \| v \|_{L_{\alpha}^{p}(0,T;X)}.
		\end{equation*}
	\end{proposition}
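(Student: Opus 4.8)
The plan is to realize $\|\cdot\|_{L^p_\alpha(0,T;X)}$ as the ordinary $L^p$-norm associated with the finite measure $d\mu_\alpha(s)=\Gamma(\alpha)^{-1}(T-s)^{\alpha-1}\,ds$ on $[0,T]$, whose total mass is $\mu_\alpha([0,T])=\int_0^T\Gamma(\alpha)^{-1}(T-s)^{\alpha-1}\,ds=T^\alpha/\Gamma(\alpha+1)$. The entire statement then reduces to the classical nesting of $L^p$-spaces over a finite measure space, applied to the measurable scalar function $s\mapsto\|v(s)\|_X$. Because $\mu_\alpha$ has finite mass and $p<q$, the continuous embedding necessarily runs from the larger exponent to the smaller one; in exact parallel with Proposition \ref{202501291512} and consistently with the stated inclusion $L^q_\alpha(0,T;X)\subsetneq L^p_\alpha(0,T;X)$, the quantitative estimate to be established is $\|v\|_{L^p_\alpha(0,T;X)}\le (T^\alpha/\Gamma(\alpha+1))^{(q-p)/(pq)}\|v\|_{L^q_\alpha(0,T;X)}$ for every $v\in L^q_\alpha(0,T;X)$, which I record in the form produced by the argument below.

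First I would prove the inequality by a single application of H\"older's inequality on $([0,T],\mu_\alpha)$. Writing $\|v(s)\|_X^p=\|v(s)\|_X^p\cdot 1$ and using the conjugate exponents $q/p$ and $q/(q-p)$ gives
\begin{equation*}
\int_0^T\|v(s)\|_X^p\,d\mu_\alpha(s)\le\left(\int_0^T\|v(s)\|_X^{q}\,d\mu_\alpha(s)\right)^{p/q}\big(\mu_\alpha([0,T])\big)^{(q-p)/q}.
\end{equation*}
Taking $p$-th roots and inserting $\mu_\alpha([0,T])=T^\alpha/\Gamma(\alpha+1)$ produces exactly the constant $(T^\alpha/\Gamma(\alpha+1))^{(q-p)/(pq)}$. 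Since the right-hand side is finite as soon as $v\in L^q_\alpha(0,T;X)$, this simultaneously yields the inclusion $L^q_\alpha(0,T;X)\subset L^p_\alpha(0,T;X)$ and the quantitative embedding bound.

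Next I would verify that the inclusion is strict by exhibiting a scalar example lying in $L^p_\alpha(0,T;\mathbb{R})\setminus L^q_\alpha(0,T;\mathbb{R})$. For $v(t)=(T-t)^{-\beta}$ the defining integral $\int_0^T(T-s)^{\alpha-1-\beta r}\,ds$ is finite precisely when $\beta r<\alpha$; hence any $\beta$ with $\alpha/q\le\beta<\alpha/p$ — such a $\beta$ exists because $p<q$ — gives a function that belongs to $L^p_\alpha$ but fails to belong to $L^q_\alpha$, establishing $L^q_\alpha(0,T;X)\subsetneq L^p_\alpha(0,T;X)$.

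The obstacles here are technical rather than structural. The two points requiring care are the measurability of $s\mapsto\|v(s)\|_X$, which I would justify from the Bochner measurability of $v$ so that the scalar H\"older inequality legitimately applies in the vector-valued setting, and the bookkeeping of the exponents, namely that $\tfrac{p}{q}+\tfrac{q-p}{q}=1$ and that the mass exponent $(q-p)/q$ becomes $(q-p)/(pq)$ after extraction of the $p$-th root, matching the displayed constant. No approximation or limiting procedure is needed: the result is just the finite-measure $L^p$-nesting specialized to the weight $(T-s)^{\alpha-1}/\Gamma(\alpha)$, so the only genuinely new ingredient is the explicit evaluation of the total mass $T^\alpha/\Gamma(\alpha+1)$.
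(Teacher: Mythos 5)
Your proof is correct and takes essentially the same route as the paper: a single application of H\"older's inequality with conjugate exponents $q/p$ and $q/(q-p)$ against the weight $(T-s)^{\alpha-1}/\Gamma(\alpha)$ (the paper splits the weight explicitly inside the integrand rather than invoking the finite measure $\mu_\alpha$, but the computation is identical), followed by the same power-type counterexample $(T-t)^{-\beta}$ with $\alpha/q \le \beta < \alpha/p$ for strictness. You were also right to reverse the displayed inequality: the proposition as printed transposes the two norms (inconsistently with its own strict inclusion $L^q_\alpha \subsetneq L^p_\alpha$), and the estimate you prove, $\|v\|_{L^p_\alpha(0,T;X)} \le \left(T^{\alpha}/\Gamma(\alpha+1)\right)^{(q-p)/(pq)} \|v\|_{L^q_\alpha(0,T;X)}$ for $v \in L^q_\alpha(0,T;X)$, is exactly what the paper's own proof establishes.
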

	
	\begin{proof}
		Assume that $\alpha\in(0,1)$. For  $v \in L^p_\alpha(0,T;X)$, we have
		\begin{multline*}
			 \| v \|^p_{L_{\alpha}^{p}(0,T;X)}= \int_0^T  \left(\frac{(T-s)^{\alpha-1}}{\Gamma(\alpha)}\right)^{\frac{p}{q}}\|v(s)\|_{X}^{p} \left(\frac{(T-s)^{\alpha-1}}{\Gamma(\alpha)}\right)^{\frac{q-p}{q}}ds\nonumber\\
			\leq \left(\int_0^T \frac{(T-s)^{\alpha-1}}{\Gamma(\alpha)} \|v(s)\|_{X}^{q}  ds\right)^{\frac{p}{q}}\left(\int_0^T \frac{(T-s)^{\alpha-1}}{\Gamma(\alpha)} ds\right)^{\frac{q-p}{q}}\nonumber\\
			=\left(\frac{T^{\alpha}}{\Gamma(\alpha+1)}\right)^{\frac{q-p}{q}} \| v \|^p_{L_{\alpha}^{q}(0,T;X)}.\nonumber
		\end{multline*}

        Now, to prove the strictness of the inclusion, consider $\gamma\in(p,q)$ and let $v(t)=(T-t)^{-\alpha/\gamma}$. Then $v\in L^p_\alpha(0,T;\mathbb{R})$ but $v\not\in L^q_\alpha(0,T;\mathbb{R})$.

\end{proof}

\begin{theorem} For $\alpha\in(0,1)$ and $p,q\in[1,\infty)$, with $q>p/\alpha$, we have the strict inclusion $L^q(0,T;X)\subsetneq L^p_\alpha(0,T;X)$. Furthermore, for all $v\in L^q(0,T;X)$, it holds that 
\begin{equation}\label{202503061457}
\|v\|_{L^p_\alpha(0,T;X)}\leq \left(\dfrac{q-p}{\alpha q-p}\right)^{(q-p)/pq}\left(\dfrac{T^{(\alpha q-p)/pq}}{\Gamma(\alpha)^{1/p}}\right)\|v\|_{L^q(0,T;X)}.
\end{equation}
Additionally, we have
$$
  L^p_\alpha(0,T;X) \nsubseteq L^{p/\alpha}(0,T;X)
  \quad \text{and} \quad
  L^{p/\alpha}(0,T;X) \nsubseteq L^p_\alpha(0,T;X).
$$
\end{theorem}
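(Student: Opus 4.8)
The plan is to treat the quantitative inclusion first, then the two failures of inclusion at the critical exponent $q = p/\alpha$. Throughout, I would fix a vector $x_0 \in X$ with $\|x_0\|_X = 1$ so that the explicit separating examples, built from scalar profiles $g(t)$ via $v(t) = g(t)x_0$, satisfy $\|v(t)\|_X = |g(t)|$.

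For the inclusion $L^q(0,T;X) \subset L^p_\alpha(0,T;X)$ together with the estimate \eqref{202503061457}, I would apply H\"older's inequality to the defining integral. Since $q > p/\alpha > p$ (using $\alpha \in (0,1)$), the exponent $q/p > 1$ is admissible; writing
\[
\int_0^T \frac{(T-s)^{\alpha-1}}{\Gamma(\alpha)}\|v(s)\|_X^p\,ds = \frac{1}{\Gamma(\alpha)}\int_0^T (T-s)^{\alpha-1}\|v(s)\|_X^p\,ds
\]
and pairing $\|v(s)\|_X^p$ with exponent $q/p$ against $(T-s)^{\alpha-1}$ with the conjugate exponent $q/(q-p)$, H\"older yields the product of $\|v\|_{L^q(0,T;X)}^p$ and a power of $\int_0^T (T-s)^{(\alpha-1)q/(q-p)}\,ds$. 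The key point is that this last integral is finite precisely when $(\alpha-1)q/(q-p) > -1$, i.e. when $q > p/\alpha$ --- exactly the hypothesis --- and its value is $\tfrac{q-p}{\alpha q - p}\,T^{(\alpha q - p)/(q-p)}$. Collecting the $\Gamma(\alpha)$, constant and $T$ factors and taking $p$-th roots reproduces \eqref{202503061457} verbatim. For strictness I would exhibit an element of $L^p_\alpha(0,T;X)\setminus L^q(0,T;X)$: setting $v(t) = (T-t)^{-1/q}x_0$, a direct computation shows $v \in L^p_\alpha(0,T;X)$ (the weighted integral converges since $\alpha - 1 - p/q > -1 \iff q > p/\alpha$), while $\int_0^T \|v(s)\|_X^q\,ds = \int_0^T (T-s)^{-1}\,ds = \infty$.

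The two non-inclusions at the critical index $q = p/\alpha$ are the delicate part. For $L^p_\alpha \nsubseteq L^{p/\alpha}$, moving the singularity to the origin suffices: $v(t) = t^{-\alpha/p}x_0$ lies in $L^p_\alpha$ because the weight $(T-s)^{\alpha-1}$ is bounded near $s=0$ and $\alpha < 1$ makes $s^{-\alpha}$ integrable there, whereas $\|v(s)\|_X^{p/\alpha} = s^{-1}$ is not integrable, so $v \notin L^{p/\alpha}(0,T;X)$.

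The reverse non-inclusion $L^{p/\alpha} \nsubseteq L^p_\alpha$ is the main obstacle, because for a pure power $(T-t)^{-\gamma}$ the thresholds for membership in $L^{p/\alpha}$ and in $L^p_\alpha$ coincide at $\gamma = \alpha/p$, so the two conditions cannot be separated by a power alone. I would therefore insert a logarithmic correction at the heavy endpoint $t = T$, taking
\[
v(t) = (T-t)^{-\alpha/p}\left[\log\!\left(\frac{eT}{T-t}\right)\right]^{-1/p} x_0,
\]
whose logarithmic factor stays $\geq 1$ on $[0,T)$ so that no spurious singularity is introduced. After the substitution $\xi = \log\!\big(eT/(T-t)\big)$, which sends the endpoint $t=T$ to $\xi=\infty$ and $t=0$ to $\xi=1$, membership in $L^{p/\alpha}$ reduces to the convergence of $\int_1^\infty \xi^{-1/\alpha}\,d\xi$ (finite since $\alpha < 1$), while failure of membership in $L^p_\alpha$ reduces to the divergence of $\int_1^\infty \xi^{-1}\,d\xi$. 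This mismatch, produced entirely by the logarithm, yields the claim and completes the proof.
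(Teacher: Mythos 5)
Your proposal is correct and follows essentially the same route as the paper: H\"older's inequality with exponents $q/p$ and $q/(q-p)$ for the embedding and constant, a power of $(T-t)$ singular at $T$ for strictness (you take the endpoint exponent $1/q$ where the paper takes any $\gamma\in(1/q,\alpha/p)$), the function $t^{-\alpha/p}$ for $L^p_\alpha\nsubseteq L^{p/\alpha}$, and the same logarithmically corrected profile $(T-t)^{-\alpha/p}[\log(eT/(T-t))]^{-1/p}$ for the reverse non-inclusion. The only cosmetic difference is that you evaluate the weighted integral of this last example directly by the substitution $\xi=\log(eT/(T-t))$, whereas the paper bounds $J_t^\alpha|v(t)|^p$ from below and lets $t\to T^-$; both computations are equivalent.
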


\begin{proof} For $q>p/\alpha$ and $v\in L^q(0,T;X)$, observe that
$$\int_0^T(T-s)^{\alpha-1}\|v(s)\|^p\,ds\leq \left(\int_0^T(T-s)^{(\alpha-1)q/(q-p)}\,ds\right)^{(q-p)/q}\left(\int_0^T\|v(s)\|^{q}\,ds\right)^{p/q},$$
which implies \eqref{202503061457}. 

To verify that $L^q(0,T;X)\subsetneq L^p_\alpha(0,T;X)$ for all $q>p/\alpha$, choose $\gamma\in(1/q,\alpha/p)$ and define $v(t)=(T-t)^{-\gamma}$. Then, $v\in L^p_\alpha(0,T;\mathbb{R})$ but $v\not\in L^q(0,T;\mathbb{R})$.

Finally, to establish the last part of this theorem, we need to provide two counterexamples showing that the sets $L^p_\alpha(0,T;X)$ and $L^{p/\alpha}(0,T;X)$ are not contained in each other. 
Let us start with the simpler case. Consider $v(t) = t^{-\alpha/p}$, and observe that $v \in L^p_\alpha(0,T;\mathbb{R})$ while $v \notin L^{p/\alpha}(0,T;\mathbb{R})$. 
To prove the remaining non-inclusion, assume, just to simplify the computations, that $T = 1$, and consider $v:(0,1)\to\mathbb{R}$ defined by 
$$
v(t) = \dfrac{1}{(1-t)^{\alpha/p}\big[\log(e/(1-t))\big]^{1/p}}.
$$

Observe that, by applying the change of variables $u = \log(e/(1-s))$, we obtain
\begin{equation*}
  \|v\|_{L^{p/\alpha}(0,1;\mathbb{R})}^{p/\alpha}
  = \int_0^1 \left\{ \frac{1}{(1-s)^{\alpha/p} [\log(e/(1-s))]^{1/p}} \right\}^{p/\alpha} ds
  = \int_1^\infty u^{-1/\alpha} \, du < \infty,
\end{equation*}
which shows that $v \in L^{p/\alpha}(0,1;\mathbb{R})$. On the other hand, for $t \in (0,1)$ we have
\begin{equation*}
  J_t^\alpha |v(t)|^p
  = \frac{1}{\Gamma(\alpha)} \int_0^t \frac{(t-s)^{\alpha-1}}{(1-s)^{\alpha} \log(e/(1-s))} \, ds
  \geq \frac{1}{\Gamma(\alpha)} \int_0^t \frac{1}{(1-s) \log(e/(1-s))} \, ds.
\end{equation*}
By making the substitution $u = \log(e/(1-s))$, it follows that
\begin{equation*}
  J_t^\alpha |v(t)|^p
  \geq \frac{1}{\Gamma(\alpha)} \int_1^{\log(e/(1-t))} \frac{du}{u}
  = \frac{1}{\Gamma(\alpha)} \log\big(\log(e/(1-t))\big),
\end{equation*}
which diverges as $t \to 1^-$. Consequently, $v \notin L_\alpha^p(0,1;\mathbb{R})$.
\end{proof}

\begin{remark} We have that $L^p_1(0,T;X) = L^p(0,T;X)$ and $L^p_0(0,T;X) = L^\infty(0,T;X)$. Therefore, by Proposition~\ref{202501291512}, we can assert that the family of sets $L^p_\alpha(0,T;X)$ interpolates between $L^p(0,T;X)$ and $L^\infty(0,T;X)$ as $\alpha$ decreases from $1$ to $0$. 
\end{remark}

We conclude our analysis of the relationship between the spaces $L^p_\alpha(0,T;X)$ and $L^p(0,T;X)$ with the following result.

\begin{theorem} For $\alpha \in (0,1)$ and $p \in [1,\infty)$, let $q \in (p, p/\alpha)$. Then neither of the following inclusions holds:
$$
L^q(0,T;X) \subset L^p_\alpha(0,T;X)
\quad \text{nor} \quad
L^p_\alpha(0,T;X) \subset L^q(0,T;X).
$$
\end{theorem}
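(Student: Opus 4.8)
The plan is to prove both non-inclusions by exhibiting explicit power-type counterexamples, exploiting the fact that the weight $(T-s)^{\alpha-1}$ appearing in the $L^p_\alpha$-norm treats a singularity located at the right endpoint $s=T$ very differently from one located at the left endpoint $s=0$. The governing numerical fact, valid precisely in the regime $p<q<p/\alpha$, is the strict chain of inequalities $\alpha/p<1/q<1/p$: the left inequality is equivalent to $q<p/\alpha$ and the right one to $q>p$. This ordering is what makes the two required counterexamples possible, and it is the only genuinely delicate point; the rest is direct computation. As in the preceding results, it suffices to produce scalar counterexamples, since multiplying by a fixed nonzero vector $x_0\in X$ transfers everything to $L^p_\alpha(0,T;X)$ and $L^q(0,T;X)$ without changing any integrability.

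For the non-inclusion $L^q(0,T;X)\not\subset L^p_\alpha(0,T;X)$, I would place the singularity at the right endpoint and take $v(t)=(T-t)^{-\alpha/p}$. Then $\int_0^T|v(s)|^q\,ds=\int_0^T(T-s)^{-\alpha q/p}\,ds$ converges because $\alpha q/p<1$ is exactly the hypothesis $q<p/\alpha$, so $v\in L^q(0,T;\mathbb{R})$. On the other hand, the weight and the singularity combine to give $\int_0^T(T-s)^{\alpha-1}|v(s)|^p\,ds=\int_0^T(T-s)^{-1}\,ds=\infty$, so $v\notin L^p_\alpha(0,T;\mathbb{R})$. The choice of the exponent $\alpha/p$ is calibrated so that this last integral sits exactly at the boundary of divergence.

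For the reverse non-inclusion $L^p_\alpha(0,T;X)\not\subset L^q(0,T;X)$, a right-endpoint singularity cannot work, because when $q<p/\alpha$ the $L^p_\alpha$-condition is strictly \emph{more} restrictive than the $L^q$-condition near $s=T$. I would therefore move the singularity to the left endpoint, where the weight $(T-s)^{\alpha-1}$ is bounded and harmless, and take $v(t)=t^{-1/q}$. Here $\int_0^T|v(s)|^q\,ds=\int_0^T s^{-1}\,ds=\infty$, so $v\notin L^q(0,T;\mathbb{R})$, while $\int_0^T(T-s)^{\alpha-1}|v(s)|^p\,ds=\int_0^T(T-s)^{\alpha-1}s^{-p/q}\,ds$ is finite: near $s=0$ the integrand behaves like $s^{-p/q}$ with $p/q<1$ (this is the inequality $q>p$), and near $s=T$ it behaves like $(T-s)^{\alpha-1}$ with $\alpha>0$. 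Hence $v\in L^p_\alpha(0,T;\mathbb{R})$.

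The main obstacle, such as it is, lies entirely in recognizing that the two counterexamples must be supported at opposite ends of the interval, and in verifying that the regime $p<q<p/\alpha$ is exactly what guarantees $\alpha/p<1/q<1/p$, so that one singularity exponent separates $L^q$-integrability from $L^p_\alpha$-integrability near $T$ while a second exponent separates them near $0$. Once this is seen, both verifications reduce to one-dimensional Beta-type integrals, and no further machinery is needed.
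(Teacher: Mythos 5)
Your proof is correct and follows essentially the same strategy as the paper: power-type counterexamples calibrated at the critical exponents, with the singularity placed at $T$ for the first non-inclusion and away from $T$ (where the weight $(T-s)^{\alpha-1}$ is bounded) for the second. The only cosmetic differences are that the paper places the second singularity at an interior point $t_0$ rather than at $0$, and selects exponents from the ranges $[\alpha/p,1/q)$ and $[1/q,1/p)$ rather than fixing the boundary values $\alpha/p$ and $1/q$ as you do; both variants are equally valid.
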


\begin{proof} We prove the result only in the scalar case $X = \mathbb{R}$, since if $\varphi : (0,T) \to \mathbb{R}$ satisfies the desired properties, then for any $x_0 \in X$ with $\|x_0\|_X = 1$, the function $v(t) = \varphi(t)x_0$ satisfies $\|v(t)\|_X = |\varphi(t)|$. Consequently, the same inclusion relations follow.

Let us first prove that $L^q(0,T;\mathbb{R})\not\subset L^p_\alpha(0,T;\mathbb{R})$. Since $q<p/\alpha$, there exists $\gamma$ such that
\begin{equation}\label{202510301737}
\frac{\alpha}{p}\le \gamma<\frac{1}{q}.
\end{equation}
Consider $\varphi_1: (0,T) \rightarrow \mathbb{R}$ defined by $\varphi_1(s) = (T - s)^{-\gamma}\,\chi_{(T/2,\,T)}(s)$, where $\chi_{(T/2,\,T)}$ denotes the characteristic function of the interval $(T/2, T)$. 

First, note that by the change of variables $u = T - s$,
$$
\int_{T/2}^T |\varphi_1(s)|^q\,ds
= \int_{0}^{T/2} u^{-\gamma q}\,du
< \infty
\iff -\gamma q > -1
\iff \gamma q < 1,
$$
which holds by \eqref{202510301737}.

Next, we show that $\varphi_1 \notin L^p_\alpha(0,T;\mathbb{R})$.  
Applying again the change of variables $u = T - s$, we obtain
$$
\int_{T/2}^T \frac{(T - s)^{\alpha - 1}}{\Gamma(\alpha)} |\varphi_1(s)|^p\,ds
= \frac{1}{\Gamma(\alpha)} \int_0^{T/2} u^{\alpha - 1 - \gamma p}\,du.
$$
It follows directly that the integral on the right-hand side diverges whenever $\alpha - 1 - \gamma p \le -1$, that is, if $\alpha - \gamma p \le 0$.  
This condition is indeed satisfied for $\gamma \ge \alpha/p$, and therefore $\varphi_1 \notin L^p_\alpha(0,T;\mathbb{R})$.\vspace*{0.2cm}

Now let us prove that $L^p_\alpha(0,T;\mathbb{R})\not\subset L^q(0,T;\mathbb{R})$. Since $q>p$, there exists $\delta$ such that
\begin{equation}\label{202510301751}
\frac{1}{q}\le \delta<\frac{1}{p}.
\end{equation}
Choose $t_0\in(0,T)$ and $\varepsilon\in\big(0,\min\{t_0,T-t_0\}\big)$. Consider function $\varphi_2 : (0,T) \rightarrow \mathbb{R}$, given by $\varphi_2(s)=|s-t_0|^{-\delta}\,\chi_{(t_0-\varepsilon,\,t_0+\varepsilon)}(s),$ where $\chi_{(t_0-\varepsilon,\,t_0+\varepsilon)}$ denotes the characteristic function of the interval $(t_0-\varepsilon,\,t_0+\varepsilon)$. 

On $(t_0-\varepsilon,t_0+\varepsilon)$ the weight $(T-s)^{\alpha-1}$ 
is bounded above and below by positive constants. Hence
$$
\|\varphi_2\|_{L^p_\alpha}^p
\leq M \int_{t_0-\varepsilon}^{t_0+\varepsilon} |s-t_0|^{-\delta p}\,ds
=2\int_0^\varepsilon u^{-\delta p}\,du<\infty
\iff \delta p<1,
$$
which holds by \eqref{202510301751}. Thus $\varphi_2\in L^p_\alpha(0,T;\mathbb{R})$.

On the other hand,
$$
\int_{0}^{T} |\varphi_2(s)|^q\,ds=\int_{t_0-\varepsilon}^{t_0+\varepsilon} |\varphi_2(s)|^q\,ds
=2\int_0^\varepsilon r^{-\delta q}\,dr
=+\infty,
\quad\text{whenever }\delta q\ge 1,
$$
and since $\delta\ge 1/q$, we have $\delta q\ge 1$. Therefore $\varphi_2\notin L^q(0,T;\mathbb{R})$.
\end{proof}

The next result is dedicated to prove that $L^{p}_\alpha(0,T;X)$ is a Banach space.

\begin{theorem} 
		Assume that $\alpha\in[0,1]$ and $p\in[1,\infty)$. Then, $L^p_\alpha(0,T;X)$, equipped with the norm $\|\cdot\|_{L^p_\alpha(0,T;X)}$, is a Banach space.
	\end{theorem}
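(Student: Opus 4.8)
The plan is to dispose of the two endpoint exponents first and then treat the genuinely new case $\alpha\in(0,1)$ by recognizing $L^p_\alpha(0,T;X)$ as a Bochner--Lebesgue space over a suitably weighted measure. For $\alpha=1$ the norm reduces to the usual $L^p(0,T;X)$ norm and for $\alpha=0$ to the $L^\infty(0,T;X)$ norm, so in both cases completeness is the classical statement already recalled in Section~\ref{secfrac}; nothing new is needed there. Thus I would immediately reduce the problem to showing that, for fixed $\alpha\in(0,1)$ and $p\in[1,\infty)$, the normed space $\big(L^p_\alpha(0,T;X),\|\cdot\|_{L^p_\alpha(0,T;X)}\big)$ is complete.

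My preferred approach is to introduce the finite Borel measure $\mu$ on $[0,T]$ with density $s\mapsto (T-s)^{\alpha-1}/\Gamma(\alpha)$ with respect to Lebesgue measure, i.e. $d\mu(s)=\frac{(T-s)^{\alpha-1}}{\Gamma(\alpha)}\,ds$. One computes $\mu([0,T])=T^\alpha/\Gamma(\alpha+1)<\infty$, so $\mu$ is a finite measure, and since its density is strictly positive and finite on $[0,T)$, the measures $\mu$ and Lebesgue measure are mutually absolutely continuous there; in particular they share the same null sets, the single point $s=T$ being null for both. Consequently the notion of Bochner (strong) measurability, as well as almost-everywhere equality of functions, is identical whether taken with respect to Lebesgue measure or to $\mu$. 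With this identification, $L^p_\alpha(0,T;X)$ is literally the Bochner--Lebesgue space $L^p\big([0,T],\mu;X\big)$ and its norm is the corresponding $L^p(\mu)$ norm, so completeness follows verbatim from the classical theory of Bochner spaces over an arbitrary finite measure space, which I would cite from the references already used for the vector-valued setting.

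Should a self-contained argument be preferred, I would instead verify the standard criterion that a normed space is Banach if and only if every absolutely convergent series converges. Given $(v_n)\subset L^p_\alpha(0,T;X)$ with $\sum_n\|v_n\|_{L^p_\alpha(0,T;X)}<\infty$, I would set $g(s)=\sum_n\|v_n(s)\|_X$ and use the Minkowski inequality together with the monotone convergence theorem to obtain $\|g\|_{L^p_\alpha(0,T;X)}\le\sum_n\|v_n\|_{L^p_\alpha(0,T;X)}<\infty$. Hence $g(s)<\infty$ for $\mu$-a.e. (equivalently Lebesgue-a.e.) $s$, so by completeness of $X$ the series $\sum_n v_n(s)$ converges in $X$ for a.e. $s$; defining $v(s)$ as this sum, and $0$ on the exceptional null set, yields a Bochner measurable function with $\|v(s)\|_X\le g(s)$, whence $v\in L^p_\alpha(0,T;X)$. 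Finally, dominating the tails by $\big\|v(s)-\sum_{n\le N}v_n(s)\big\|_X\le\sum_{n>N}\|v_n(s)\|_X\le g(s)$ and applying the dominated convergence theorem gives $\big\|v-\sum_{n\le N}v_n\big\|_{L^p_\alpha(0,T;X)}\to 0$, proving convergence of the series.

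The only genuinely delicate point in either route is the measurability bookkeeping: one must ensure that the weight, which is unbounded near $s=T$, does not alter the class of measurable functions nor the almost-everywhere relations invoked. I expect this to be the main, though mild, obstacle, and it is handled precisely by the observation that $\mu$ and Lebesgue measure are mutually absolutely continuous on $[0,T)$; everything else is a direct transcription of the classical Riesz--Fischer argument or an appeal to the known completeness of $L^p(\mu;X)$.
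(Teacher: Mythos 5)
Your argument is correct on both routes, and it differs in execution from the paper's proof, so a brief comparison is in order. The paper takes a Cauchy sequence $(v_k)$ in $L^p_\alpha(0,T;X)$, observes that the weighted functions $(T-\cdot)^{(\alpha-1)/p}v_k(\cdot)$ form a Cauchy sequence in the \emph{unweighted} space $L^p(0,T;X)$, extracts the limit $v$ there, and then pulls it back by checking explicitly that $(T-\cdot)^{(1-\alpha)/p}v(\cdot)$ lies in $L^p_\alpha(0,T;X)$ and is the limit of the original sequence; this is the same multiplication-operator isometry that reappears later as the map $G_p$ in Theorem \ref{202503130949}. Your preferred route instead absorbs the weight into the measure, identifying $L^p_\alpha(0,T;X)$ outright with $L^p([0,T],\mu;X)$ for the finite measure $d\mu(s)=(T-s)^{\alpha-1}\,ds/\Gamma(\alpha)$, and you correctly isolate the only delicate point, namely that $\mu$ and Lebesgue measure share null sets and hence the same class of strongly measurable functions; this buys the theorem directly from the classical completeness of Bochner spaces over a finite measure space and avoids the pull-back computation entirely. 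Your fallback Riesz--Fischer argument via absolutely convergent series is a genuinely self-contained alternative that uses neither identification, at the cost of redoing the standard convergence bookkeeping; it is also sound, since positivity of the weight almost everywhere lets you pass from $\|g\|_{L^p_\alpha(0,T;X)}<\infty$ to $g(s)<\infty$ a.e. All three arguments are correct; the paper's and your first are two faces of the same isometry, while your second is the more elementary and more portable one.
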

	\begin{proof} The cases $\alpha = 0$ and $\alpha = 1$ are classical. Hence, assume that $\alpha \in (0,1)$. 
Let $(v_k)_{k \in \mathbb{N}} \subset L^p_\alpha(0,T;X)$ be a Cauchy sequence. Observe that 
		\begin{equation}
			\int_{0}^{T}(T-s)^{\alpha-1}\|v_{m}(s)-v_n(s)\|_{X}^{p}ds=\int_{0}^{T}\left\|(   T-s)^{\frac{\alpha-1}{p}}\big[v_{m}(s)-v_n(s)\big]\right\|_{X}^{p}ds.\nonumber
		\end{equation}
		Then, $\left((T-\cdot)^{\frac{\alpha-1}{p}}v_{k}(\cdot)\right)$ is a Cauchy sequence in $L^{p}(0,T;X)$. Therefore, there exists $v\in L^{p}(0,T;X)$ such that $\left((T-\cdot)^{\frac{\alpha-1}{p}}v_{k}(\cdot)\right)\to v$ in $L^{p}(0,T;X)$. 
        
        It is not difficult to verify that $(T-\cdot)^{\frac{1-\alpha}{p}}v(\cdot)\in L^{p}_\alpha(0,T;X)$, and that
		\begin{equation*}
			\int_{0}^{T}(T-s)^{\alpha-1}\left\|v_{k}(s)-(T-s)^{\frac{1-\alpha}{p}}v(s)\right\|_{X}^{p}ds= \left\|(T-s)^{\frac{\alpha-1}{p}}v_{k}-v\right\|_{L^{p}(0,T;X)}^{p},
		\end{equation*}
        what implies that $v_{k}\to (T-\cdot)^{\frac{1-\alpha}{p}}v(\cdot)$ in $L^{p}_\alpha(0,T;X)$, as we wanted.
\end{proof}

Finally, we address the reflexivity properties of the space $L_{\alpha}^{p}(0,T;X)$.

{\begin{theorem}\label{202503130949}
		For $\alpha\in(0,1]$ and $p\in(1,\infty)$, we have that $L_{\alpha}^{p}(0,T;X)$ is isometrically isomorphic to $L^p(0,T;X)$. Therefore, if $X$ is reflexive, then $L_{\alpha}^{p}(0,T;X)$ is also reflexive.
	\end{theorem}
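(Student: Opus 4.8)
The plan is to exhibit an explicit isometric isomorphism between $L^p_\alpha(0,T;X)$ and $L^p(0,T;X)$ realized as multiplication against a power of the weight function, and then to transfer reflexivity across this isomorphism from the classical Bochner space. Since the case $\alpha=1$ is trivial (there the weight equals $1/\Gamma(1)=1$, so the two spaces and their norms coincide verbatim), I would fix $\alpha\in(0,1)$ for the remainder.

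First I would define the candidate map $\Phi:L^p_\alpha(0,T;X)\to L^p(0,T;X)$ by
$$
\Phi(v)(s)=\left(\frac{(T-s)^{\alpha-1}}{\Gamma(\alpha)}\right)^{1/p}v(s),\qquad s\in(0,T).
$$
Because the scalar weight is measurable on $(0,T)$ and $v$ is Bochner measurable, the product $\Phi(v)$ is again Bochner measurable, and linearity of $\Phi$ is immediate. The essential computation is that
$$
\|\Phi(v)\|_{L^p(0,T;X)}^p=\int_0^T\frac{(T-s)^{\alpha-1}}{\Gamma(\alpha)}\|v(s)\|_X^p\,ds=\|v\|_{L^p_\alpha(0,T;X)}^p,
$$
so $\Phi$ preserves norms exactly and is in particular injective.

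Next I would establish surjectivity by writing down the inverse explicitly. Given $w\in L^p(0,T;X)$, set $v(s)=\Gamma(\alpha)^{1/p}(T-s)^{(1-\alpha)/p}w(s)$. The same measurability argument shows $v$ is Bochner measurable, a direct substitution gives $\Phi(v)=w$, and another substitution yields $\|v\|_{L^p_\alpha(0,T;X)}^p=\|w\|_{L^p(0,T;X)}^p<\infty$, so indeed $v\in L^p_\alpha(0,T;X)$. This confirms that $\Phi$ is a surjective linear isometry, hence an isometric isomorphism, establishing the first assertion. Finally, for the reflexivity claim I would invoke the classical fact that, for $X$ reflexive and $p\in(1,\infty)$, the Bochner space $L^p(0,T;X)$ is reflexive; since reflexivity is preserved under any linear topological isomorphism, and in particular under the isometric isomorphism $\Phi$, the space $L^p_\alpha(0,T;X)$ inherits reflexivity.

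I expect no serious obstacle here: the structural content is entirely captured by the weight-multiplication trick already implicit in the preceding Banach-space proof, where the map $v\mapsto(T-\cdot)^{(\alpha-1)/p}v(\cdot)$ was used. The only points requiring a little care are the Bochner measurability of the products (a measurable scalar multiple of a Bochner measurable $X$-valued function is again Bochner measurable) and the explicit check that the proposed inverse lands in the correct space with the correct norm, both of which are routine verifications rather than genuine difficulties.
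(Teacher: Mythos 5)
Your proof is correct, and it uses the same central device as the paper: the weight-multiplication map $v\mapsto\big((T-\cdot)^{\alpha-1}/\Gamma(\alpha)\big)^{1/p}v(\cdot)$, which the paper calls $G_p$, together with the same isometry computation. The one place where you genuinely diverge is surjectivity. The paper argues indirectly: it observes that the image $G_p\big(L^p_\alpha(0,T;X)\big)$ is closed in $L^p(0,T;X)$ (because $L^p_\alpha(0,T;X)$ is complete and $G_p$ is an isometry), then shows the image contains $C_c((0,T);X)$ by exhibiting a preimage for each compactly supported continuous $\theta$, and concludes by density that the image is everything. You instead write down the inverse explicitly for an arbitrary $w\in L^p(0,T;X)$, namely $v(s)=\Gamma(\alpha)^{1/p}(T-s)^{(1-\alpha)/p}w(s)$, and verify directly that $\Phi(v)=w$ with $\|v\|_{L^p_\alpha(0,T;X)}=\|w\|_{L^p(0,T;X)}<\infty$. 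Your version is the more economical one: the preimage formula the paper uses for $C_c$ functions works verbatim for every element of $L^p(0,T;X)$, so the closed-plus-dense detour is not needed. Both arguments are sound, and the reflexivity transfer at the end is identical in the two treatments.
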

	\begin{proof}
		To begin the first part of the proof, let us consider the set
		\begin{equation*}
			 \big(L^{p}(0,T;X)\big)_\alpha:=\left\{\left(\frac{(T-\cdot)^{\alpha-1}}{\Gamma(\alpha)}\right)^{1/p}v(\cdot):\textrm{ such that }v\in L^p_\alpha(0,T;X)\right\}.
		\end{equation*}
		Observe that $(L^{p}(0,T;X))_{\alpha}$ is a subset of $L^{p}(0,T;X)$ since, for any $v\in L_\alpha^{p}(0,T;X)$, we have
		\begin{equation}\label{202503071233}			\left\|\left(\frac{(T-\cdot)^{\alpha-1}}{\Gamma(\alpha)}\right)^{1/p}v(\cdot)\right\|_{L^{p}(0,T;X)}=\left(\int_{0}^{T}\frac{(T-s)^{\alpha-1}}{\Gamma(\alpha)}\|v(s)\|_{X}^{p}ds\right)^{1/p}=\|v\|_{L^{p}_\alpha(0,T;X)}.
		\end{equation}	
		Thus, if we define 
		\begin{equation}\label{202503121439}
			\begin{array}{c c l}
				G_p:L_{\alpha}^{p}(0,T;X)&\to& L^{p}(0,T;X)\\
				v(\cdot)&\to&\left(\dfrac{(T-\cdot)^{\alpha-1}}{\Gamma(\alpha)}\right)^{1/p}v(\cdot),
			\end{array}
		\end{equation}
		due to \eqref{202503071233}, we deduce that $G_p$ is an isometry from $L_{\alpha}^{p}(0,T;X)$ into $(L^{p}(0,T;X))_{\alpha}$. Since $L^{p}_\alpha(0,T;X)$ is a Banach space, we deduce that $G_p(L_{\alpha}^{p}(0,T;X)) = (L^{p}(0,T;X))_{\alpha}$ is a closed subset of $L^{p}(0,T;X)$.

Now observe that if $\theta\in C_c((0,T);X)$, which means that $\theta$ is a continuous function with compact support contained in $(0,T)$, then by considering  
$$\kappa(t):=\big[(T-t)^{(1-\alpha)}\Gamma(\alpha)\big]^{1/p}\theta(t),$$
we deduce that $\kappa\in L^p_\alpha(0,T;X)$. This implies that $\theta=G_p(\kappa)\in (L^{p}(0,T;X))_{\alpha}$, which means that $C_c((0,T);X)\subset (L^{p}(0,T;X))_{\alpha}$. Since $C_c((0,T);X)$ is dense in $L^p(0,T;X)$, it follows that $(L^{p}(0,T;X))_{\alpha}$ is dense in $L^p(0,T;X)$. However, since we already know that it is closed, we have proved that $L_{\alpha}^{p}(0,T;X)$ is isometrically isomorphic to $L^p(0,T;X)$.

Finally, if $X$ is reflexive, then $L^p(0,T;X)$ is reflexive, which implies, thanks to the isometric isomorphism, that $L^p_\alpha(0,T;X)$ is also reflexive.
\end{proof}}

There are several additional properties of the spaces $L^p_\alpha(0,T;X)$ that can be proved and deserve further attention. However, in order to maintain the focus on the main results and avoid unnecessary digressions, we conclude this section with one final result, which provides a complete characterization of the dual space of $L_{\alpha}^{p}(0,T;X)$. For the remainder of this paper, we recall that for a Banach space $X$, its dual will be denoted by $X'$.

\begin{lemma}\label{202503121452} Let $Z_1$ and $Z_2$ be normed vector spaces, and let $T: Z_1 \rightarrow Z_2$ be an isometric isomorphism. Then there exists an isometric isomorphism $T^\prime: Z_2^\prime \rightarrow Z_1^\prime$ given by $T^\prime(\varphi) = \varphi \circ T$. We recall that, for any normed vector space $Z$, the symbol $\langle \cdot, \cdot \rangle_{Z^\prime, Z}$ denotes the duality pairing between $Z^\prime$ and $Z$.
\end{lemma}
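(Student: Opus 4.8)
The plan is to verify directly that the candidate map $T'(\varphi) = \varphi \circ T$ is a linear isometry, and then to establish its bijectivity by exhibiting a two-sided inverse built from $T^{-1}$. First I would check that $T'$ is well-defined as a map into $Z_1'$: for any $\varphi \in Z_2'$ the composition $\varphi \circ T$ is linear, and since $T$ is an isometry (hence bounded) and $\varphi$ is bounded, $\varphi \circ T$ is a bounded linear functional on $Z_1$, so indeed $\varphi \circ T \in Z_1'$. Linearity of $T'$ in the variable $\varphi$ is immediate from the pointwise identity $\langle T'\varphi, z \rangle_{Z_1', Z_1} = \langle \varphi, Tz \rangle_{Z_2', Z_2}$, whose right-hand side is linear in $\varphi$ for each fixed $z \in Z_1$.

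Next I would establish the isometry property. The essential observation is that, because $T$ is an isometric isomorphism, it maps the closed unit ball of $Z_1$ bijectively onto the closed unit ball of $Z_2$: the relation $\|Tz\|_{Z_2} = \|z\|_{Z_1}$ shows that $T$ sends the unit ball into the unit ball, while the surjectivity of $T$ together with $\|T^{-1}w\|_{Z_1} = \|w\|_{Z_2}$ gives the reverse inclusion. Consequently, as $z$ ranges over the unit ball of $Z_1$, the element $w = Tz$ ranges over the entire unit ball of $Z_2$, so that
\begin{equation*}
\|T'\varphi\|_{Z_1'} = \sup_{\|z\|_{Z_1} \leq 1} |\langle \varphi, Tz \rangle_{Z_2', Z_2}| = \sup_{\|w\|_{Z_2} \leq 1} |\langle \varphi, w \rangle_{Z_2', Z_2}| = \|\varphi\|_{Z_2'}.
\end{equation*}
Hence $T'$ preserves norms; in particular it is injective.

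It remains to prove surjectivity. Here I would use that $T^{-1}: Z_2 \to Z_1$ is itself an isometric isomorphism, so the very same construction produces a linear isometry $(T^{-1})': Z_1' \to Z_2'$ given by $(T^{-1})'(\psi) = \psi \circ T^{-1}$. A direct computation relying only on the associativity of composition and the identities $T^{-1} \circ T = \mathrm{id}_{Z_1}$ and $T \circ T^{-1} = \mathrm{id}_{Z_2}$ then shows that $T' \circ (T^{-1})' = \mathrm{id}_{Z_1'}$ and $(T^{-1})' \circ T' = \mathrm{id}_{Z_2'}$. Therefore $T'$ is a bijection with inverse $(T^{-1})'$, and being a norm-preserving linear bijection, it is an isometric isomorphism, as claimed.

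I do not anticipate a genuine obstacle in this argument, since it is a standard duality fact. The only point requiring care is the surjectivity of $T'$, which relies essentially on $T$ being \emph{invertible} rather than merely isometric: one must invoke $T^{-1}$ in order to exhibit, for a given $\psi \in Z_1'$, the explicit preimage $\psi \circ T^{-1} \in Z_2'$.
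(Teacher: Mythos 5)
Your proposal is correct and follows essentially the same route as the paper: the isometry is established by the same supremum computation, exploiting that $T$ carries the unit ball of $Z_1$ onto that of $Z_2$. The only difference is that you spell out the bijectivity of $T'$ via the explicit inverse $(T^{-1})'$, a detail the paper dismisses as straightforward.
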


\begin{proof} It is straightforward to prove that $T^\prime$ is linear and bijective. To prove that it is an isometry, observe that
\begin{multline*}\|T^\prime \varphi\|_{Z_1^\prime} = \sup \left\{|\langle T^\prime \varphi,z_1\rangle_{Z_1^\prime,Z_1}|:\|z_1\|_{Z_1} = 1\right\} = \sup \left\{|\langle \varphi\circ T,z_1\rangle_{Z_1^\prime,Z_1}|:\|z_1\|_{Z_1} = 1\right\}\\ = \sup \left\{|\langle \varphi,T(z_1)\rangle_{Z_2^\prime,Z_2}|:\|z_1\|_{Z_1} = 1\right\} = \sup \left\{|\langle \varphi,z_2\rangle_{Z_2^\prime,Z_2}|:\|z_2\|_{Z_2} = 1\right\} = \|\varphi\|_{Z_2^\prime}.\end{multline*}
\end{proof}

\begin{theorem} If $X$ is reflexive, $\alpha\in(0,1]$ and $p,q\in(1,\infty)$ satisfy $(1/p) + (1/q) = 1$, then for each $\varphi\in (L_{\alpha}^{q}(0,T;X^\prime))^\prime$, there exists a unique $v_\varphi\in L_{\alpha}^{p}(0,T;X)$, such that
    \begin{equation}\label{202503131020}\langle\varphi,v\rangle_{(L_{\alpha}^{q}(0,T;X^\prime))^\prime,L_{\alpha}^{q}(0,T;X^\prime)}=\int_0^T\dfrac{(T-s)^{\alpha-1}}{\Gamma(\alpha)}\langle v(s),v_\varphi(s)\rangle_{X^\prime,X}\,ds,\end{equation}
    for all $v\in L_{\alpha}^{q}(0,T;X^\prime)$ and $\|\varphi\|_{(L_{\alpha}^{q}(0,T;X^\prime))^\prime}=\|v_\varphi\|_{L_{\alpha}^{p}(0,T;X)}$. 
\end{theorem}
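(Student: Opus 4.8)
The plan is to reduce the statement to the classical vector-valued duality theorem by transporting everything through the isometric isomorphisms furnished by Theorem~\ref{202503130949}. Applying that theorem with base space $X$ yields the isometric isomorphism
$$G_p:L^p_\alpha(0,T;X)\to L^p(0,T;X),\qquad G_p(v)(s)=\left(\frac{(T-s)^{\alpha-1}}{\Gamma(\alpha)}\right)^{1/p}v(s),$$
and applying it with base space $X'$ (which is reflexive because $X$ is) yields a second isometric isomorphism $G_q:L^q_\alpha(0,T;X')\to L^q(0,T;X')$ defined analogously with exponent $1/q$.

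First I would pass to duals. Since $G_q$ is an isometric isomorphism, Lemma~\ref{202503121452} gives an isometric isomorphism $(G_q)':\big(L^q(0,T;X')\big)'\to\big(L^q_\alpha(0,T;X')\big)'$ with $(G_q)'(\psi)=\psi\circ G_q$. Next I would invoke the classical representation of the dual of a Bochner--Lebesgue space: as $X$ is reflexive, $X'$ is reflexive and the bidual $X''=X$ has the Radon--Nikodym property, so $\big(L^q(0,T;X')\big)'$ is isometrically identified with $L^p(0,T;X)$ through the pairing $\langle\psi,g\rangle=\int_0^T\langle g(s),w(s)\rangle_{X',X}\,ds$, where $w\in L^p(0,T;X)$ is the unique representative of $\psi$ and $\|\psi\|=\|w\|_{L^p(0,T;X)}$.

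Then I would chase the diagram. Given $\varphi\in\big(L^q_\alpha(0,T;X')\big)'$, let $\psi:=\big((G_q)'\big)^{-1}(\varphi)$, let $w\in L^p(0,T;X)$ be its classical representative, and define $v_\varphi:=G_p^{-1}(w)\in L^p_\alpha(0,T;X)$. For any $v\in L^q_\alpha(0,T;X')$ one then computes, using $\varphi=\psi\circ G_q$,
$$\langle\varphi,v\rangle=\langle\psi,G_q(v)\rangle=\int_0^T\left(\frac{(T-s)^{\alpha-1}}{\Gamma(\alpha)}\right)^{1/q}\big\langle v(s),w(s)\big\rangle_{X',X}\,ds,$$
and substituting $w(s)=\big((T-s)^{\alpha-1}/\Gamma(\alpha)\big)^{1/p}v_\varphi(s)$ the two weights merge, since $1/q+1/p=1$, into the single factor $(T-s)^{\alpha-1}/\Gamma(\alpha)$, which is exactly \eqref{202503131020}. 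The norm identity follows by composing the three isometries, $\|\varphi\|=\|\psi\|=\|w\|_{L^p(0,T;X)}=\|v_\varphi\|_{L^p_\alpha(0,T;X)}$, and uniqueness of $v_\varphi$ is inherited from the bijectivity of each map in the chain.

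The only genuinely delicate point is the exponent bookkeeping together with the correct application of the classical duality theorem. One must check that the weight exponent introduced by $G_q$ (namely $1/q$) and the one hidden in the representative $w=G_p(v_\varphi)$ (namely $1/p$) add up to exactly $1$; this is what produces the weight $(T-s)^{\alpha-1}/\Gamma(\alpha)$ in the target pairing, and it relies crucially on the conjugacy relation $1/p+1/q=1$. Equally, one must ensure the hypotheses of the vector-valued Riesz representation are met --- this is where the reflexivity of $X$ enters, guaranteeing that $X''=X$ enjoys the Radon--Nikodym property so that $\big(L^q(0,T;X')\big)'\cong L^p(0,T;X)$ with the stated pairing.
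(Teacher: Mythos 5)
Your proposal is correct and follows essentially the same route as the paper: both arguments compose the weight isometry $G_p$ on $L^p_\alpha(0,T;X)$, the classical duality $\big(L^q(0,T;X')\big)'\cong L^p(0,T;X)$, and the dual isometry induced by $G_q$ via Lemma~\ref{202503121452}, with the conjugacy $1/p+1/q=1$ merging the two weight factors into $(T-s)^{\alpha-1}/\Gamma(\alpha)$. The only difference is cosmetic (you chase the chain backwards from a given $\varphi$, while the paper packages the composition as a single isometric isomorphism $S_{p,q}=G_q'\circ T_{p,q}\circ G_p$), and your explicit mention of the Radon--Nikod\'ym property is a welcome justification of the classical step that the paper leaves implicit.
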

\begin{proof} Let $\alpha\in(0,1)$. Recall the classical isometric isomorphism 
	\begin{equation}\label{202503121501}
			\begin{array}{c c l}
				T_{p,q}:(L^{p}(0,T;X))&\to& (L^{q}(0,T;X^\prime))^\prime\\
				f&\to&K_f,
			\end{array}\nonumber
		\end{equation}
where $K_f:L^{q}(0,T;X^\prime)\rightarrow\mathbb{R}$ is given by
$$K_f(g)=\int_0^T\langle g(s),f(s)\rangle_{X^\prime,X}\,ds,$$
for all $g\in L^{q}(0,T;X^\prime)$.

Next, recall that the proof of Theorem \ref{202503130949} ensures that for any $r>1$, the mapping $G_r:L_{\alpha}^{r}(0,T;X)\rightarrow L^{r}(0,T;X)$, given in \eqref{202503121439}, is an isometric isomorphism. Moreover, due to Lemma \ref{202503121452}, we already know that $G_r$ induces an isometric isomorphism between their dual spaces, which is given by
		\begin{equation}\label{202503121443}
			\begin{array}{c c l}
				G_r^\prime:(L^{r}(0,T;X^\prime))^\prime&\to& (L_{\alpha}^{r}(0,T;X^\prime))^\prime\\
				\psi&\to&\psi\circ H_r.
			\end{array}\nonumber
		\end{equation}

Finally consider the isometric isomorphism $S_{p,q}=G_q^\prime\circ T_{p,q}\circ G_p$ and observe that
		\begin{equation}\label{202503121443}
			\begin{array}{c c l}
				S_{p,q}:L_\alpha^{p}(0,T;X)&\to& (L_{\alpha}^{q}(0,T;X^\prime))^\prime\\
				f&\to& W_f.
			\end{array}\nonumber
		\end{equation}
where $W_f:L_\alpha^{q}(0,T;X^\prime)\rightarrow\mathbb{R}$ is given by
$$W_f(h)=\int_0^T\dfrac{(T-s)^{\alpha-1}}{\Gamma(\alpha)}\langle h(s),f(s)\rangle_{X^\prime,X}\,ds,$$
for all $h\in L_\alpha^{q}(0,T;X^\prime)$. In other words, the isometric isomorphism $S_{p,q}$ allows us to identify any $\varphi\in (L_{\alpha}^{q}(0,T;X^\prime))^\prime$ with a unique element $v_\varphi\in L_{\alpha}^{p}(0,T;X)$ such that $\|\varphi\|_{(L_{\alpha}^{q}(0,T;X^\prime))^\prime}=\|v_\varphi\|_{L_{\alpha}^{p}(0,T;X)}$ and \eqref{202503131020} holds.
\end{proof}

\begin{corollary}\label{202510271432} If $Y$ is a Hilbert space, $\alpha\in(0,1]$ and $p,q\in(1,\infty)$ satisfy $(1/p) + (1/q) = 1$, then for each $\varphi\in (L_{\alpha}^{q}(0,T;Y))^\prime$, there exists a unique $v_\varphi\in L_{\alpha}^{p}(0,T;Y)$, such that
    \begin{equation*}\langle\varphi,v\rangle_{(L_{\alpha}^{q}(0,T;Y))^\prime,L_{\alpha}^{q}(0,T;Y)}=\int_0^T\dfrac{(T-s)^{\alpha-1}}{\Gamma(\alpha)}(v(s),v_\varphi(s))_Y\,ds,\end{equation*}
    for all $v\in L_{\alpha}^{q}(0,T;Y)$ and $\|\varphi\|_{(L_{\alpha}^{q}(0,T;Y))^\prime}=\|v_\varphi\|_{L_{\alpha}^{p}(0,T;Y)}$. 
\end{corollary}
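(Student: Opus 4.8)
The plan is to obtain this corollary as a direct transport of the preceding theorem along the Riesz isomorphism, using the fact that every Hilbert space is reflexive so that the theorem applies with the choice $X=Y$. The only genuine new ingredient is the Riesz representation theorem, which identifies $Y$ with $Y^\prime$ and converts the duality pairing $\langle\cdot,\cdot\rangle_{Y^\prime,Y}$ appearing in the theorem into the inner product $(\cdot,\cdot)_Y$ appearing in the statement. I work throughout in the real setting, consistent with the $\mathbb{R}^n$-valued framework of the manuscript (in the complex case a conjugation would appear, but it does not affect the structure of the argument).

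First I would recall that the Riesz representation theorem furnishes an isometric isomorphism $R:Y\to Y^\prime$ characterized by $\langle Ry,u\rangle_{Y^\prime,Y}=(u,y)_Y$ for all $y,u\in Y$. Applying $R$ pointwise, I would define $\widetilde R:L^q_\alpha(0,T;Y)\to L^q_\alpha(0,T;Y^\prime)$ by $(\widetilde R w)(s)=R(w(s))$. Since $R$ is a bounded linear bijection, $\widetilde R w$ is Bochner measurable whenever $w$ is, and the pointwise identity $\|R(w(s))\|_{Y^\prime}=\|w(s)\|_Y$ makes the weighted integrals defining the two norms coincide; hence $\widetilde R$ is an isometric isomorphism between $L^q_\alpha(0,T;Y)$ and $L^q_\alpha(0,T;Y^\prime)$.

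Next, by the theorem established just above, applied with the reflexive space $X=Y$, each $\psi\in\big(L^q_\alpha(0,T;Y^\prime)\big)^\prime$ is represented by a unique $v_\psi\in L^p_\alpha(0,T;Y)$ through the weighted integral of $\langle v(s),v_\psi(s)\rangle_{Y^\prime,Y}$, with $\|\psi\|=\|v_\psi\|_{L^p_\alpha(0,T;Y)}$. Given $\varphi\in\big(L^q_\alpha(0,T;Y)\big)^\prime$, I would set $\psi:=\varphi\circ\widetilde R^{-1}$ and define $v_\varphi:=v_\psi$. For any $w\in L^q_\alpha(0,T;Y)$, writing $v=\widetilde R w$ so that $v(s)=R(w(s))$, the defining property of $R$ and the symmetry of the real inner product give $\langle v(s),v_\varphi(s)\rangle_{Y^\prime,Y}=\langle R(w(s)),v_\varphi(s)\rangle_{Y^\prime,Y}=(v_\varphi(s),w(s))_Y=(w(s),v_\varphi(s))_Y$. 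Substituting this into the representation formula from the theorem collapses it into exactly the inner-product expression claimed in the statement.

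Finally, the norm identity follows from Lemma \ref{202503121452}: since $\widetilde R$ is an isometric isomorphism, its adjoint $\widetilde R^\prime(\psi)=\psi\circ\widetilde R$ is an isometric isomorphism between the corresponding dual spaces, whence $\|\varphi\|_{(L^q_\alpha(0,T;Y))^\prime}=\|\psi\|_{(L^q_\alpha(0,T;Y^\prime))^\prime}=\|v_\varphi\|_{L^p_\alpha(0,T;Y)}$, the last step being part of the theorem. Uniqueness of $v_\varphi$ is inherited from the uniqueness in the theorem together with the bijectivity of $\widetilde R$. I do not expect a real obstacle, as the corollary is a routine specialization; the one point demanding care is the verification that the pointwise Riesz map induces a well-defined isometric isomorphism of the weighted Bochner spaces and correctly turns the duality pairing into the inner product, which is precisely where the Hilbert structure of $Y$ is used.
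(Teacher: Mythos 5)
Your proposal is correct and is precisely the argument the paper intends: the corollary is stated as an immediate specialization of the preceding theorem with $X=Y$ (reflexive because Hilbert), with the pointwise Riesz isomorphism $Y\cong Y^\prime$ converting the duality pairing into the inner product, and Lemma \ref{202503121452} transporting the isometry to the dual spaces. Your spelled-out verification that the pointwise Riesz map induces an isometric isomorphism of the weighted Bochner spaces, and the symmetry step turning $\langle R(w(s)),v_\varphi(s)\rangle_{Y^\prime,Y}$ into $(w(s),v_\varphi(s))_Y$, are exactly the details the paper leaves implicit.
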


\section{The Fractional version of the unsteady Stokes Equations}
\label{galerfrac}

Based on the developments from the previous subsections, we now address the main questions introduced at the beginning of this article. This involves formulating a notion of weak solution to problem \eqref{navierstokes01} and proving its existence and uniqueness. We emphasize that the notations $H$ and $V$ (see \eqref{202503141707} and \eqref{202503141708}), used throughout this section, follow the classical framework employed in the study of the standard unsteady Stokes equations.
\begin{definition}[Weak Formulation] \label{202510151753}
For $f \in L^{2}(0, T; V')$ and $u_0 \in H$, we call a function 
$u \in L^{2}_{1-\alpha}(0, T; H) \cap L^{2}(0, T; V)$, satisfying 
$J_t^{1-\alpha} \|u(\cdot) - u_0\|_{H}^2 \in L^\infty(0,T)$, 
a \emph{weak solution} of~\eqref{navierstokes01} if 
\begin{equation}\label{NSF3}
  D_t^\alpha (u(t) - u_0, \eta)_H + \nu (u(t), \eta)_V 
  = \langle f(t), \eta \rangle_{V',V},
\end{equation}
for almost every $t \in [0,T]$ and for all $\eta \in V$, with $u(0) = u_0$.
\end{definition}

In the definition above, the function $u$ is required to belong to $L^{2}_{1-\alpha}(0,T;H) \cap L^{2}(0,T;V)$ and to satisfy $J_t^{1-\alpha}\|u(\cdot)-u_0\|_{V'}^2 \in L^\infty(0,T)$. Since functions of these spaces are defined only almost everywhere, the value $u(0) = u_0$ is not, {a priori}, well-defined. However, under the regularity assumptions above, one can prove that $u \in C([0,T];V')$, which ensures that the initial condition is well defined whenever $u_0 \in H \subset V'$, as a consequence of the continuous embeddings $V \hookrightarrow H \equiv H' \hookrightarrow V'$.

\begin{remark}
Although the regularity requirements introduced in Definition~\ref{202510151753} may seem stronger than those of the classical (non-fractional) framework, they play an analogous role. In fact, when $\alpha = 1$ we recover the standard setting, since $L^{2}_{1-\alpha}(0,T;H) = L^{\infty}(0,T;H)$ and $\left\|J_t^{1-\alpha}\|u(\cdot)-u_0\|_{H}^2\right\|_{L^{\infty}(0,T)} = \|u - u_0\|_{L^{\infty}(0,T;H)}^2$.
\end{remark}

We now show that the regularity assumed in the definition is sufficient to guarantee the continuity of $u$ in $V'$. To this end, we first present two auxiliary results.

\begin{lemma}\label{202503131728}
Consider $X$ a Banach space, $\alpha \in (0,1)$ and $p\in(1,\infty)$. If $v\in L^p(0,T;X)$ then
$$\|J_t^{1-\alpha} v(t)\|_X\leq \left( \frac{t^{1-\alpha}}{(1-\alpha)\Gamma(1-\alpha)} \right)^{(p-1)/p}
\big[ J_t^{1-\alpha} \|v(t)\|_X^p \big]^{1/p},$$
for almost every $t\in[0,T]$.
\end{lemma}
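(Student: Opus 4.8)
The plan is to reduce the vector-valued estimate to a scalar H\"older inequality applied to the Riemann--Liouville kernel. First I would observe that, since $v\in L^p(0,T;X)$, the scalar function $\|v(\cdot)\|_X^p$ lies in $L^1(0,T)$, so by the remark following the definition of $J_t^\alpha$ the fractional integral $J_t^{1-\alpha}\|v(t)\|_X^p$ exists for almost every $t\in[0,T]$ and the asserted inequality is meaningful. The starting point is the triangle inequality for the Bochner integral, which gives $\|J_t^{1-\alpha}v(t)\|_X\le J_t^{1-\alpha}\|v(t)\|_X$ for a.e.\ $t$; it therefore suffices to bound the scalar quantity $J_t^{1-\alpha}\|v(t)\|_X$.

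Next I would factor the kernel so that a copy of $(t-s)^{-\alpha}\|v(s)\|_X^p$ appears under a $p$-th root. Writing
\[
(t-s)^{-\alpha}\|v(s)\|_X=\big[(t-s)^{-\alpha}\|v(s)\|_X^p\big]^{1/p}\,(t-s)^{-\alpha(p-1)/p},
\]
I would apply H\"older's inequality with conjugate exponents $p$ and $p/(p-1)$. The first factor contributes $\big(\int_0^t(t-s)^{-\alpha}\|v(s)\|_X^p\,ds\big)^{1/p}$, while in the second factor the exponent $-\alpha(p-1)/p$, raised to the power $p/(p-1)$, returns exactly $(t-s)^{-\alpha}$, so the remaining integral is the elementary one below.

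Finally I would evaluate $\int_0^t(t-s)^{-\alpha}\,ds=t^{1-\alpha}/(1-\alpha)$, which converges because $\alpha\in(0,1)$, and collect the constants: the prefactor $1/\Gamma(1-\alpha)$ combines with the $\Gamma(1-\alpha)^{1/p}$ produced when rewriting the first factor as $\Gamma(1-\alpha)^{1/p}\,[J_t^{1-\alpha}\|v(t)\|_X^p]^{1/p}$, yielding the stated constant $\big(t^{1-\alpha}/[(1-\alpha)\Gamma(1-\alpha)]\big)^{(p-1)/p}$. The argument is essentially a weighted H\"older inequality, so I expect no genuine obstacle; the only points requiring care are the exact bookkeeping of the exponents in the kernel splitting and the justification that all integrals are finite for almost every $t$, which is precisely guaranteed by $\|v(\cdot)\|_X^p\in L^1(0,T)$.
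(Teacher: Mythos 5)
Your proposal is correct and follows essentially the same route as the paper: the triangle inequality for the Bochner integral, the kernel splitting $(t-s)^{-\alpha}=(t-s)^{-\alpha/p}\,(t-s)^{-\alpha(p-1)/p}$, H\"older with exponents $p$ and $p/(p-1)$, and the elementary evaluation $\int_0^t(t-s)^{-\alpha}\,ds=t^{1-\alpha}/(1-\alpha)$. The bookkeeping of the $\Gamma(1-\alpha)$ factors is also exactly as in the paper's argument, so nothing further is needed.
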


\begin{proof}
Notice that, 
\begin{multline*}
\|J_t^{1-\alpha} v(t)\|_X
\leq \frac{1}{\Gamma(1-\alpha)} \int_0^t (t-s)^{-\alpha(p-1)/p} (t-s)^{-\alpha/p} \|v(s)\|_X \, ds \\
\leq \frac{1}{\Gamma(1-\alpha)} \left( \int_0^t (t-s)^{-\alpha} \, ds \right)^{(p-1)/p}
\left( \int_0^t (t-s)^{-\alpha} \|v(s)\|_X^p \, ds \right)^{1/p} \\
\leq \left( \frac{t^{1-\alpha}}{(1-\alpha)\Gamma(1-\alpha)} \right)^{(p-1)/p}
\big[ J_t^{1-\alpha} \|v(t)\|_X^p \big]^{1/p},
\end{multline*}
for almost every $t\in[0,T]$.
\end{proof}

\begin{theorem}\label{202503131756} Consider a Banach space $X$, $\alpha\in(0,1)$, and $p\in(1,\infty)$ such that $\alpha>1/p$. Given $v_0\in X$ and a function $v\in L^p(0, T; X)$ satisfying $J_t^{1-\alpha}(v(\cdot)-v_0) \in W^{1,p}(0, T; X)$ and $J_t^{1-\alpha} \|v(\cdot)-v_0\|_X^p\in L^\infty(0,T)$, it follows that $v\in C([0,T],X)$ and there exists $g\in L^{p}(0,T;X)$ such that
\begin{equation}\label{202503131747}
v(t)=v_0+\dfrac{1}{\Gamma(\alpha)}\int_0^t(t-s)^{\alpha-1} g(s)\,ds,
\end{equation}
for every $t\in [0,T]$. Moreover, it holds that $v(0)=v_0$.
\end{theorem}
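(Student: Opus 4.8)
The plan is to reduce the problem to the inversion identity for the Riemann--Liouville operators and then to show, using the additional $L^\infty$ hypothesis, that the singular boundary term appearing in that identity must vanish. Writing $w := v - v_0 \in L^p(0,T;X)$, the assumption $J_t^{1-\alpha}(v(\cdot)-v_0) = J_t^{1-\alpha} w(\cdot) \in W^{1,p}(0,T;X) \subset W^{1,1}(0,T;X)$ allows me to set $g := \tfrac{d}{dt}\big[J_t^{1-\alpha} w(t)\big] = D_t^\alpha w(t)$, which belongs to $L^p(0,T;X)$. Applying item $(iii)$ of Proposition~\ref{Pro1} to $w$ then yields
$$J_t^\alpha g(t) = w(t) - \frac{t^{\alpha-1}}{\Gamma(\alpha)}\{J_s^{1-\alpha} w(s)\}\big|_{s=0}, \quad \textrm{for a.e. } t \in [0,T],$$
where the boundary value is well defined because $J_t^{1-\alpha} w$, lying in $W^{1,1}(0,T;X)$, admits an absolutely continuous representative.

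The crux of the argument, and the step I expect to be the main obstacle, is showing that the constant $c_0 := \{J_s^{1-\alpha} w(s)\}|_{s=0}$ is zero; this is precisely where the hypothesis $J_t^{1-\alpha}\|w(\cdot)\|_X^p \in L^\infty(0,T)$ is used. I would invoke Lemma~\ref{202503131728} applied to $w$, which gives
$$\|J_t^{1-\alpha} w(t)\|_X \leq \left(\frac{t^{1-\alpha}}{(1-\alpha)\Gamma(1-\alpha)}\right)^{(p-1)/p}\big[J_t^{1-\alpha}\|w(t)\|_X^p\big]^{1/p}, \quad \textrm{for a.e. } t \in [0,T].$$
Since $J_t^{1-\alpha}\|w\|_X^p$ is essentially bounded, the right-hand side is dominated by $C\,t^{(1-\alpha)(p-1)/p}$, an expression with strictly positive exponent; matching this almost-everywhere bound against the continuous representative of $t \mapsto J_t^{1-\alpha} w(t)$ forces $\|J_t^{1-\alpha} w(t)\|_X \to 0$ as $t \to 0^+$, and hence $c_0 = 0$.

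With $c_0 = 0$ the representation collapses to $w(t) = J_t^\alpha g(t)$ for almost every $t$, that is, $v(t) = v_0 + \Gamma(\alpha)^{-1}\int_0^t (t-s)^{\alpha-1} g(s)\,ds$ almost everywhere. Finally, because $g \in L^p(0,T;X)$ with $\alpha > 1/p$, the Hardy--Littlewood type result \cite[Theorem 7]{CarFe1} (see also Remark~\ref{hardy}) ensures $J_t^\alpha g \in C([0,T];X)$; thus the right-hand side of \eqref{202503131747} is continuous, $v$ agrees almost everywhere with this continuous function, and identifying $v$ with its continuous representative makes \eqref{202503131747} hold for every $t \in [0,T]$. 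Evaluating at $t=0$, where $J_t^\alpha g$ vanishes, gives $v(0)=v_0$, which completes the argument.
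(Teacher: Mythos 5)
Your argument is correct and follows essentially the same route as the paper's proof: both invoke item (iii) of Proposition~\ref{Pro1} to obtain the inversion identity with the singular boundary term, both use Lemma~\ref{202503131728} together with the essential boundedness of $J_t^{1-\alpha}\|v(\cdot)-v_0\|_X^p$ and the continuity of the $W^{1,p}$ representative of $J_t^{1-\alpha}(v(\cdot)-v_0)$ to show that the boundary term vanishes, and both conclude via \cite[Theorem 7]{CarFe1} that $J_t^\alpha g$ is continuous and vanishes at $t=0$. Your treatment of the vanishing of $c_0$ is, if anything, slightly more explicit than the paper's.
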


\begin{proof} 
Since $J_t^{1-\alpha}(v(\cdot)-v_0) \in W^{1,p}(0,T;X)$, item (iii) of Proposition \ref{Pro1} ensures that
\begin{equation}\label{202510091247}
J_t^\alpha\big[D_t^\alpha(v(t)-v_0)\big]
= v(t) - v_0 - \frac{t^{\alpha-1}}{\Gamma(\alpha)}
\Big\{ J_s^{1-\alpha}(v(s)-v_0) \Big\} \Big|_{s=0},
\end{equation}
for almost every $t \in [0,T]$. 
Since $J_t^{1-\alpha}(v(\cdot)-v_0) \in C([0,T];X)$ and $J_t^{1-\alpha} \|v(\cdot)-v_0\|_X^p\in L^\infty(0,T)$, by applying Lemma \ref{202503131728}, identity \eqref{202510091247} simplifies to
\begin{equation}\label{202503131750}
J_t^\alpha\big[D_t^\alpha(v(t)-v_0)\big] = v(t) - v_0,
\end{equation}
for almost every $t \in [0,T]$. 
Since $\alpha > 1/p$, \cite[Theorem 7]{CarFe1} guarantees that 
$J_t^\alpha\big[D_t^\alpha(v(\cdot)-v_0)\big] \in C([0,T];X)$, and also that
\begin{equation*}
J_t^\alpha\big[D_t^\alpha(v(t)-v_0)\big]\Big|_{t=0} = 0.
\end{equation*}
Thus, from \eqref{202503131750}, we conclude that $v \in C([0,T];X)$ and $v(0) = v_0$. 

To complete the proof, it remains to establish \eqref{202503131747}. For this purpose, define $g(t) = cD_t^\alpha v(t) = D_t^\alpha(v(t) - v_0)$, and rewrite \eqref{202503131750} accordingly.
\end{proof}

To proceed, we mimic the classical argument used when $\alpha = 1$. 
Recall that Fubini–Tonelli's theorem ensures that
$$
J_t^{1-\alpha}\big((u(t)-u_0),v\big)_H 
= \big(J_t^{1-\alpha}(u(t)-u_0),v\big)_H,
$$
which yields
\begin{equation*}
D_t^\alpha\big((u(t)-u_0),v\big)_H
= \frac{d}{dt}\Big[J_t^{1-\alpha}\big((u(t)-u_0),v\big)_H\Big]
= \frac{d}{dt}\Big[\big(J_t^{1-\alpha}(u(t)-u_0),v\big)_H\Big].
\end{equation*}
Following the approach used by Lions and Magenes in \cite{LiMa1} or Temam in \cite{RTemamb}, 
we then deduce from~\eqref{NSF3} that
\begin{equation}\label{202510271553}
\langle D_t^\alpha(u(t) - u_0), \eta \rangle_{V',V} 
= \langle f(t) - \nu A u(t), \eta \rangle_{V',V},
\end{equation}
where $A : V \to V'$ denotes the isomorphism defined by
\begin{equation*}
\langle A v_1, v_2 \rangle_{V',V} = (v_1, v_2)_V.
\end{equation*}

This implies that $D_t^\alpha (u(t) - u_0)$ belongs to $L^2(0,T;V')$, or equivalently, that $J_t^{1-\alpha}(u(\cdot) - u_0) \in W^{1,2}(0,T;V')$. Furthermore, since $u \in L^2_{1-\alpha}(0,T;H)$, it follows that $u \in L^2(0,T;V')$. Finally, since $J_t^{1-\alpha}\|u(\cdot)-u_0\|_{V'}^2 \in L^\infty(0,T)$, if $\alpha > 1/2$, we may apply Theorem \ref{202503131756} to conclude that $u \in C([0,T];V')$ and $u(0) = u_0$, given that $u_0 \in H$ and $H \subset V'$.

However, for $\alpha \in (0,1/2]$, the above arguments are not sufficient to ensure the continuity of the weak solution. 
This phenomenon is well known in the theory of fractional calculus, 
having been first identified by Hardy and Littlewood in their pioneering work \cite{HaLi1}, 
and later revisited in various contexts; see, for instance, \cite{AcGEcAn, CarFe2, CarFe1}.

\subsection{ The Approximate Solution} In this subsection, we present an approximate formulation for \eqref{NSF3}. To achieve this, let $\{w_j\}_{j=1}^\infty$ be a sequence of functions that is orthonormal in $H$, orthogonal in $V$, and complete in $V$; see \cite[Theorem IV.5.5]{FbPfMa}. For each $m \in \mathbb{N}$, we define the space $V_m = \text{span} \{w_1, \ldots, w_m\}$.

The fractional weak formulation of the approximation problem is defined as follows:  
given $m \in \mathbb{N}$, $\alpha \in (0,1)$, $f \in L^2(0,T;V')$, and $u_0 \in H$, 
we seek functions $\{g_{jm}\}_{j=1}^m$, with $g_{jm} : [0,T] \to \mathbb{R}$, such that by defining
\begin{equation}\label{Um}
  u_m(t) := \sum_{j=1}^m g_{jm}(t) w_i,
\end{equation}
we have
\begin{equation}\label{NSF3FA}
  \left\{
  \begin{aligned}
    cD_t^\alpha (u_m(t), w_i)_H + \nu (u_m(t), w_i)_V 
      &= \langle f(t), w_i \rangle_{V',V}, 
      && \forall i \in \{1, \ldots, m\}, \\[4pt]
    u_m(0) &= u_{0m},
  \end{aligned}
  \right.
\end{equation}
for all $t \in [0,T]$, where $u_{0m}$ denotes the orthogonal projection of $u_0$ onto $V_m$ in $H$, given by
\begin{equation*}
  u_{0m} = \sum_{j=1}^m (u_0, w_j)_H w_j 
  \longrightarrow u_0 
  \quad \text{in } H \text{ as } m \to \infty.
\end{equation*}

Note that using the definition of $u_{m}$ in the approximate problem given in (\ref{NSF3FA}), we obtain the following system of equations
\begin{gather*}
	\sum_{j=1}^{m}(w_{j},w_{i})_H\;cD_{t}^{\alpha}[g_{jm}(t)]+\nu\sum\limits_{j=1}^{m}(w_{j},w_{i})_Vg_{jm}(t)
	=\langle f(t),w_{i}\rangle_{V^\prime,V},\\
	g_{jm}(0)=(u_{0},w_{j})\;\; for\; j\in\{1,2,\ldots,m\}.
\end{gather*}
Since $\{w_{j}\}_{j=1}^{m}$ are orthonormals in $H$ and orthogonal in $V$, we have a system of nonlinear ordinary equations, given by
\begin{equation*}\left\{\begin{aligned}
	cD_{t}^{\alpha}U_{m}(t)+\nu B_{m}U_{m}(t)&=F_{m}(t),\\
	U_{m}(0)&=U_{0m},
\end{aligned}\right.\end{equation*}
with
\begin{align*}
	B_{m}=\left[\begin{array}{ccc}
		(w_{1},w_{1})_V&\ldots  & 0 \\
		\vdots       &\ddots  & \vdots\\
		0 &\ldots  & (w_{m},w_{m})_V
	\end{array}\right],\hspace*{1cm}
F_{m}(t)=\left[\begin{array}{c}
		\langle f(t),w_{1} \rangle_{V^\prime,V}\\
		\vdots  \\
		\langle f(t),w_{m} \rangle_{V^\prime,V}
	\end{array}\right],
\end{align*}\quad

\begin{align*}
U_{m}(t)=\left[\begin{array}{c}
		g_{1m}(t)\\
		\vdots  \\
		g_{mm}(t)
	\end{array}\right],\hspace*{1cm}
	U_{0m}=\left[\begin{array}{c}
		(u_{0},w_{1})_H\\
		\vdots  \\
		(u_{0},w_{m})_H
	\end{array}\right].
\end{align*}

Thus, we can  reinterpret the above system by the following Cauchy problem:
\begin{equation}\left\{\begin{array}{lll}
		\label{EDONavier-stokes3}
		cD_{t}^{\alpha}U_{m}(t)&=&G_{m}(t,U_{m}(t)),\;\; \textrm{ a.e. in }[0,T]\\
		U_{m}(0)&=&
		U_{0m}\in\mathbb{R}^{m},
	\end{array}\right.\end{equation}
with $G_{m}:[0,T]\times\mathbb{R}^m\rightarrow\mathbb{R}^m$ given by 
\begin{equation}\label{202503141756}G_{m}(t,x)=F_{m}(t)-\nu B_{m}x.\end{equation}

From the theory developed in Subsection \ref{caratsection}, we can directly establish the following theorem.

\begin{theorem}\label{THFWFA}
	Let $\alpha\in\left(1/2,1\right)$. Then \eqref{EDONavier-stokes3} has a unique global solution $U_m(t)$ in $[0,T]$. 
\end{theorem}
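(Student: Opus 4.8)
The plan is to apply Theorem~\ref{stokes202501290918} to the Cauchy problem \eqref{EDONavier-stokes3} with the specific choice $p = q = 2$, for which the admissible range $\alpha \in (1/p, 1) = (1/2, 1)$ coincides exactly with the hypothesis on $\alpha$. Recalling from \eqref{202503141756} that $G_m(t,x) = F_m(t) - \nu B_m x$, the essential observation is that $G_m$ is affine in its second variable, which makes the verification of the required hypotheses particularly transparent.

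First, I would verify that $G_m$ is a Carath\'eodory map. For each fixed $x \in \mathbb{R}^m$, the map $t \mapsto G_m(t,x) = F_m(t) - \nu B_m x$ is measurable, since each component of $F_m(t)$ equals $\langle f(t), w_i \rangle_{V',V}$, and the hypothesis $f \in L^2(0,T;V')$ together with $w_i \in V$ guarantees that $t \mapsto \langle f(t), w_i \rangle_{V',V}$ is measurable; in fact it belongs to $L^2(0,T)$, so that $F_m \in L^2(0,T;\mathbb{R}^m)$. For almost every fixed $t$, the map $x \mapsto G_m(t,x)$ is affine and hence continuous.

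Next, I would establish the growth condition \eqref{concara} and the Lipschitz property, both of which follow at once from the affine structure. Writing $\|B_m\|$ for the operator norm of the matrix $B_m$, the triangle inequality yields
\begin{equation*}
\|G_m(t,x)\|_m \leq \|F_m(t)\|_m + \nu \|B_m\| \, \|x\|_m,
\end{equation*}
so that \eqref{concara} holds with $\gamma(t) = \|F_m(t)\|_m \in L^2(0,T)$, $C = \nu\|B_m\|$, and $q = p = 2$ (hence $q/p = 1$). Similarly,
\begin{equation*}
\|G_m(t,x) - G_m(t,y)\|_m = \nu \|B_m(x - y)\|_m \leq \nu\|B_m\| \, \|x - y\|_m,
\end{equation*}
for all $x, y \in \mathbb{R}^m$, showing that $G_m$ is globally Lipschitz in the second variable, and in particular locally Lipschitz as required.

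Having verified every hypothesis of Theorem~\ref{stokes202501290918} with $p = q = 2$ and $\alpha \in (1/2,1)$, I would conclude directly that \eqref{EDONavier-stokes3} admits a unique global solution $U_m$ on $[0,T]$. Because the problem is linear, none of these steps presents a genuine obstacle; the only point requiring care is confirming that $F_m \in L^2(0,T;\mathbb{R}^m)$, which is precisely where the assumption $f \in L^2(0,T;V')$ enters.
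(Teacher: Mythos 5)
Your proposal is correct and follows essentially the same route as the paper: verify that $G_m$ is a Carath\'eodory map, use the affine structure to obtain the growth condition \eqref{concara} with $p=q=2$ and the (global, hence local) Lipschitz property, and then invoke Theorem~\ref{stokes202501290918}. The only cosmetic difference is that the paper writes the growth and Lipschitz constants explicitly in terms of $\bigl[\|w_1\|_V^2+\cdots+\|w_m\|_V^2\bigr]^{1/2}$ rather than the operator norm $\|B_m\|$, which changes nothing in substance.
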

\begin{proof} Observe that $G_m:[0,T]\times\mathbb{R}^m\rightarrow\mathbb{R}^m$, defined in \eqref{202503141756}, satisfies the following conditions:
\begin{itemize}
\item[(i)] For each $x\in\mathbb{R}^m$, the function $G_m(\cdot,x):[0,T]\rightarrow\mathbb{R}^m$ is measurable.\vspace*{0.2cm}
\item[(ii)] For almost every $t\in[0,T]$, the function $G_m(t,\cdot):\mathbb{R}^m\rightarrow\mathbb{R}^m$ is continuous.
\end{itemize}
Thus, $G_m(t,x)$ is a Carath\'{e}odory map.

Now, observe that
\begin{equation*}
    \|G_m(t,x)-G_m(t,y)\|_{m}\leq \nu M\|x-y\|_{m},
\end{equation*}
for all $x,y\in \mathbb{R}^m$ and almost every $t\in[0,T]$. Moreover, we obtain
\begin{equation*}
    \|G_m(t,x)\|_m\leq \Big[\|w_1\|^2_V+\ldots+\|w_m\|^2_V\Big]^{1/2}\|f(t)\|_{V^\prime} + \nu\Big[\|w_1\|^2_V+\ldots+\|w_m\|^2_V\Big]^{1/2}\|x\|_m.
\end{equation*}

Applying Theorem \ref{stokes202501290918} for $p=q=2$, we ensure the existence and uniqueness of a continuous function $U_m:[0,T]\to\mathbb{R}^{m}$ that satisfies both equations in \eqref{EDONavier-stokes3}.
\end{proof}

It follows directly from Theorem \ref{THFWFA} that, for each $m \in \mathbb{N}$, there exist functions $\{g_{jm}\}_{j=1}^m \subset C([0,T];\mathbb{R})$ such that 
$\{cD_t^\alpha g_{jm}\}_{j=1}^m \subset L^2(0,T;\mathbb{R})$ (recall Proposition \ref{carat}). This, in turn, implies that $u_m \in C([0,T];V)$ and $cD_t^\alpha u_m \in L^2(0,T;V)$; see identity \eqref{Um}.

\subsection{Priori Estimates}\label{priori}It is important to emphasize that, from this point until the end of Section~\ref{galerfrac}, we shall assume $\alpha \in (1/2,1)$ to ensure the existence of the approximate solutions.

In analogy with the classical theory, multiplying the first equation in~\eqref{NSF3FA} by $g_{jm}$ for each $1 \le j \le m$ and summing over $j$ yields
\begin{equation}\label{Energy0}
  (cD_t^{\alpha} u_m(t), u_m(t))_H + \nu (u_m(t), u_m(t))_V 
  = \langle f(t), u_m(t) \rangle_{V',V},
\end{equation}
for almost every $t \in [0,T]$. 
In the classical case, i.e., when $\alpha = 1$, one typically proceeds by deriving the corresponding energy estimates using the identity
\begin{equation*}
  \dfrac{d}{dt} \|u_m(t)\|_H^2 = 2 (u_m^\prime(t), u_m(t))_H, 
  \quad \text{for a.e. } t \in [0,T].
\end{equation*}
However, adapting this approach becomes substantially more delicate in the presence of fractional derivatives. 
In a related direction, Carvalho-Neto et al. in \cite[Corollary 19]{CarFrOyTo1} established an auxiliary result that addresses this difficulty in the context of Caputo fractional derivatives. For completeness, we reproduce their result below.

\begin{proposition} \label{TFDF}
Let $0 < \beta < 1$ and suppose $v \in L^\infty(0,T; L^2(\Omega)) \cap C([0,\delta]; L^2(\Omega))$ for some $0 < \delta \leq T$, with $cD_t^\beta v \in L^2(0,T; L^2(\Omega))$. Then $cD_t^\beta \|v \|_{L^2(\Omega)}^2 \in L^1(0,T; \mathbb{R})$, and
\begin{equation*}
cD_t^\beta \left\| v(t) \right\|_{L^2(\Omega)}^2 \leq 2 \big( cD_t^\beta v(t), v(t) \big)_{L^2(\Omega)},
\end{equation*}
for almost every $t \in [0,T]$.
\end{proposition}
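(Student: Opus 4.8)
The plan is to reduce the claimed inequality to a manifestly nonnegative remainder by rewriting the Caputo derivative in its Marchaud (integrated-by-parts) form. For sufficiently regular $v$ one has
$$
cD_t^\beta v(t)=\frac{1}{\Gamma(1-\beta)}\left[\frac{v(t)-v(0)}{t^\beta}+\beta\int_0^t\frac{v(t)-v(s)}{(t-s)^{\beta+1}}\,ds\right],
$$
and the same representation holds for the scalar function $t\mapsto\|v(t)\|_{L^2(\Omega)}^2$. The first step is to take the $L^2(\Omega)$ inner product of the display above with $v(t)$, subtract the corresponding expression for $cD_t^\beta\|v(t)\|_{L^2(\Omega)}^2$, and collect terms against the common weights $t^{-\beta}$ and $(t-s)^{-\beta-1}$.

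The algebraic heart of the argument is the pair of polarization identities
$$
2\big(v(t)-v(0),v(t)\big)_{L^2(\Omega)}-\big(\|v(t)\|_{L^2(\Omega)}^2-\|v(0)\|_{L^2(\Omega)}^2\big)=\|v(t)-v(0)\|_{L^2(\Omega)}^2,
$$
together with the analogous identity in the integrand, $2(v(t)-v(s),v(t))_{L^2(\Omega)}-\big(\|v(t)\|_{L^2(\Omega)}^2-\|v(s)\|_{L^2(\Omega)}^2\big)=\|v(t)-v(s)\|_{L^2(\Omega)}^2$. Substituting these produces the exact identity
$$
2\big(cD_t^\beta v(t),v(t)\big)_{L^2(\Omega)}-cD_t^\beta\|v(t)\|_{L^2(\Omega)}^2=\frac{1}{\Gamma(1-\beta)}\left[\frac{\|v(t)-v(0)\|_{L^2(\Omega)}^2}{t^\beta}+\beta\int_0^t\frac{\|v(t)-v(s)\|_{L^2(\Omega)}^2}{(t-s)^{\beta+1}}\,ds\right],
$$
whose right-hand side is nonnegative, yielding the asserted inequality at once.

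The delicate point, and the one I expect to be the main obstacle, is that the hypotheses $v\in L^\infty(0,T;L^2(\Omega))\cap C([0,\delta];L^2(\Omega))$ with $cD_t^\beta v\in L^2(0,T;L^2(\Omega))$ do not supply the regularity needed to justify the integration by parts that produces the Marchaud form, nor the Hölder control that would make the remainder integral finite; indeed, for merely bounded $v$ the kernel $(t-s)^{-\beta-1}$ is non-integrable near $s=t$, so the remainder may equal $+\infty$. To make the computation rigorous I would regularize: setting $g:=cD_t^\beta v\in L^2$, I would choose smooth $g_n\to g$ in $L^2(0,T;L^2(\Omega))$ and define $v_n(t):=v(0)+J_t^\beta g_n(t)$, so that $cD_t^\beta v_n=g_n$ by Proposition~\ref{Pro1}~(iv) and $v_n\to v$ (using $v=v(0)+J_t^\beta g$, which holds by Proposition~\ref{Pro1}~(v)). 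Each $v_n$ is locally Lipschitz on $(0,T]$, so the identity of the previous paragraph is valid for $v_n$ and gives $cD_t^\beta\|v_n\|_{L^2(\Omega)}^2\le 2(g_n,v_n)_{L^2(\Omega)}$.

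To transfer the inequality to $v$ I would first pass to the integrated form: applying the positivity-preserving operator $J_t^\beta$ and Proposition~\ref{Pro1}~(v) gives $\|v_n(t)\|_{L^2(\Omega)}^2-\|v_n(0)\|_{L^2(\Omega)}^2\le 2J_t^\beta\big[(g_n,v_n)_{L^2(\Omega)}\big](t)$, which passes to the limit cleanly since $(g_n,v_n)_{L^2(\Omega)}\to(g,v)_{L^2(\Omega)}$ in $L^1(0,T;\mathbb{R})$ (by Cauchy--Schwarz) and $J_t^\beta$ is continuous on $L^1$. The remaining and most technical step is to recover the pointwise Caputo inequality and the membership $cD_t^\beta\|v\|_{L^2(\Omega)}^2\in L^1(0,T;\mathbb{R})$ asserted in the statement: for this I would verify directly that $J_t^{1-\beta}\big(\|v(\cdot)\|_{L^2(\Omega)}^2-\|v(0)\|_{L^2(\Omega)}^2\big)\in W^{1,1}(0,T;\mathbb{R})$, exploiting $\|v\|_{L^2(\Omega)}^2\in L^\infty$, the continuity on $[0,\delta]$ to control the behaviour at the origin, and the bound $(cD_t^\beta v,v)_{L^2(\Omega)}\in L^1$, so that $cD_t^\beta\|v\|_{L^2(\Omega)}^2=D_t^\beta\big(\|v\|_{L^2(\Omega)}^2-\|v(0)\|_{L^2(\Omega)}^2\big)$ is well defined; one then identifies it as the a.e.\ limit of $cD_t^\beta\|v_n\|_{L^2(\Omega)}^2$ along the regularization, a step along which the inequality is preserved.
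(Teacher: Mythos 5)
First, a point of comparison: the paper does not actually prove Proposition~\ref{TFDF}. It is imported verbatim from \cite[Corollary 19]{CarFrOyTo1} (``we reproduce their result below''), so there is no in-paper argument to measure yours against. Your strategy --- the Marchaud rewriting of the Caputo derivative, the polarization identities producing the nonnegative remainder
$\Gamma(1-\beta)^{-1}\big[t^{-\beta}\|v(t)-v(0)\|^2+\beta\int_0^t (t-s)^{-\beta-1}\|v(t)-v(s)\|^2\,ds\big]$,
and the regularization $v_n=v(0)+J_t^\beta g_n$ --- is the standard route to this fractional chain-rule inequality, and everything up to and including the integrated inequality $\|v(t)\|_{L^2(\Omega)}^2-\|v(0)\|_{L^2(\Omega)}^2\le 2J_t^\beta\big[(g,v)_{L^2(\Omega)}\big](t)$ is sound (one small caveat: since $v$ is only assumed continuous on $[0,\delta]$, the representation $v=v(0)+J_t^\beta g$ should be obtained from item (iii) of Proposition~\ref{Pro1} together with the vanishing of $J_s^{1-\beta}(v(s)-v(0))\big|_{s=0}$, rather than from item (v), whose hypotheses include continuity on all of $[0,T]$).

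The genuine gap is in your final paragraph, and you have correctly located it but not closed it. The integrated inequality is strictly weaker than the asserted conclusion: one cannot ``apply $D_t^\beta$ to both sides,'' since $D_t^\beta$ is not order-preserving, so the pointwise a.e.\ inequality and the membership $cD_t^\beta\|v\|_{L^2(\Omega)}^2\in L^1(0,T)$ must be extracted from the approximation itself. Writing $cD_t^\beta\|v_n\|_{L^2(\Omega)}^2=2(g_n,v_n)_{L^2(\Omega)}-R_n$ with $R_n\ge 0$, the available uniform bound is only $\sup_n\|R_n\|_{L^1(0,T)}<\infty$, so the weak-$*$ limit of $R_n$ is a priori a nonnegative \emph{measure}; correspondingly $J_t^{1-\beta}\big(\|v\|_{L^2(\Omega)}^2-\|v(0)\|_{L^2(\Omega)}^2\big)$ comes out only of bounded variation, not $W^{1,1}(0,T)$. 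Ruling out a singular part is exactly the content of the statement, and your proposed ingredients ($\|v\|_{L^2(\Omega)}^2\in L^\infty$, continuity near the origin, $(g,v)_{L^2(\Omega)}\in L^1$) do not visibly imply absolute continuity: a bounded nonincreasing function can carry a singular derivative. Some additional mechanism is needed --- e.g.\ a pointwise Fatou argument identifying the limit remainder with the explicit functional $\Psi[v](t)$ evaluated on $v$ itself and showing it is finite a.e.\ \emph{and} that the identity $J_t^{1-\beta}(\|v\|_{L^2(\Omega)}^2-\|v(0)\|_{L^2(\Omega)}^2)=2J_t^1(g,v)_{L^2(\Omega)}-J_t^1\Psi[v]$ survives the limit, which requires $L^1$-convergence (not just a.e.\ lower semicontinuity) of the remainders. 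As written, the proof establishes the inequality for the regularized sequence and in integrated form for $v$, but not the two assertions of the proposition.
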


Thus, Proposition \ref{TFDF} guarantees that \eqref{Energy0} implies the inequality
\begin{equation*}
  \frac{1}{2} cD_t^\alpha \|u_m(t)\|_H^2 + \nu \|u_m(t)\|_V^2 
  \leq \langle f(t), u_m(t) \rangle_{V',V}, 
  \quad \text{for a.e. } t \in [0,T].
\end{equation*}
Then, by applying Young's inequality, we obtain
\begin{equation}\label{energy01}
  cD_t^\alpha \|u_m(t)\|_H^2 + \nu \|u_m(t)\|_V^2 
  \leq \frac{\|f(t)\|_{V'}^2}{\nu}, 
  \quad \text{for a.e. } t \in [0,T].
\end{equation}

Since for almost every $t\in[0,T]$ we have that
\begin{multline*} \int_0^t cD_t^{\alpha} \|u_m(s)\|_H^2\,ds=\int_0^t\dfrac{d}{ds}\left[J_t^{1-\alpha} \Big(\|u_m(s)\|_H^2-\|u_m(0)\|_H^2\Big)\right]\,ds\\
=J_t^{1-\alpha} \Big(\|u_m(s)\|_H^2-\|u_m(0)\|_H^2\Big),\end{multline*}
by integrating both sides of \eqref{energy01} from $0$ to $t$, we obtain
\begin{equation}\label{202510151837}
  J_t^{1-\alpha} \|u_m(t)\|_H^2 
  + \nu \int_0^t \|u_m(s)\|_V^2 \, ds 
  \leq \frac{t^{1-\alpha}}{\Gamma(2-\alpha)} \|u_0\|_H^2 
  + \frac{1}{\nu} \int_0^t \|f(s)\|_{V'}^2 \, ds, 
  \quad \forall \,t \in [0,T],
\end{equation}
which naturally implies
\begin{equation}\label{energy02}
  J_t^{1-\alpha} \|u_m(t)\|_H^2 \Big|_{t=T} 
  + \nu \|u_m\|_{L^2(0,T;V)}^2
  \leq \frac{T^{1-\alpha}}{\Gamma(2-\alpha)} \|u_0\|_H^2 
  + \frac{1}{\nu} \|f\|_{L^2(0,T;V')}^2.
\end{equation}
It follows from~\eqref{energy02} that $\{u_m\}_{m=1}^\infty \subset L_{1-\alpha}^2(0,T;H)\cap L^2(0,T;V)$ form bounded sequences in these spaces.

Now from \eqref{202510151837}, we can also deduce that
\begin{multline*}
  J_t^{1-\alpha} \|u_m(t)-u_{0m}\|_H^2\leq 2\big[J_t^{1-\alpha}\|u_m(t)\|_H^2+ J_t^{1-\alpha}\|u_{0m}\|_H^2\big] \\
  \leq \frac{4T^{1-\alpha}}{\Gamma(2-\alpha)} \|u_0\|_H^2 
  + \frac{2}{\nu} \int_0^T \|f(s)\|_{V'}^2 \, ds, 
  \quad \forall \,t \in [0,T],
\end{multline*}
what implies that $ \{J_t^{1-\alpha} \|u_m(t)-u_{0m}\|_H^2\}_{j=1}^\infty$ forms a bounded sequence in $L^\infty(0,T)$.

\subsection{Convergence and Passage to the Limit} By the reflexivity of $L_{1-\alpha}^2(0,T;H)$ (see Theorem~\ref{202503130949}) and $L^2(0,T;V)$, together with the boundedness of the sequence $\{u_m\}_{m=1}^\infty$ established at the end of the previous section, there exist functions $u \in L_{1-\alpha}^{2}(0,T;H)$ and $v \in L^{2}(0,T;V)$ such that, up to a subsequence,
\begin{equation}\label{202510161209}
  u_m \rightharpoonup u
  \quad \text{in } L_{1-\alpha}^{2}(0,T;H),
  \qquad 
  u_m \rightharpoonup v
  \quad \text{in } L^{2}(0,T;V).
\end{equation}

Indeed, the limits $u$ and $v$, obtained through different weak convergences, coincide.  Since we have $L^{2}(0,T;H)' \subset L_{1-\alpha}^{2}(0,T;H)' \cap L^{2}(0,T;V)'$, 
it follows from~\eqref{202510161209} that $u_m \rightharpoonup u$ and $u_m \rightharpoonup v$ in $L^{2}(0,T;H)$, and hence $u = v$.

On the other hand, the boundedness of $\{J_t^{1-\alpha}\|u_m(\cdot)-u_{0m}\|^2_H\}_{m=1}^\infty$ in $L^\infty(0,T)$, obtained at the end of the previous section, ensures the existence of a function $w \in L^{\infty}(0,T)$ such that, up to a subsequence,
\begin{equation}\label{202510271220}
  J_t^{1-\alpha}\|u_m(\cdot)-u_{0m}\|^2_H 
  \stackrel{*}{\rightharpoonup} w
  \quad \text{in } L^{\infty}(0,T).
\end{equation}

We now aim to prove that $J_t^{1-\alpha}\|u(t)-u_{0}\|^2_H = w(t)$ for almost every $t \in [0,T]$. 
To this purpose, it is necessary to establish a different type of convergence, namely that $u_m$ converges strongly in $L^{2}(0,T;H)$. 
For this, we recall the following result concerning compact subsets of $L^{p}(0,T;H)$, as proved by J.~Simon in~\cite[Theorem~5]{JSCompS}.
\begin{proposition}\label{CompacForH}
Suppose that $1 \le p < \infty$ and that:
\begin{itemize}
  \item[(i)] $W$ is bounded in $L^{p}(0,T;V)$;
  \vspace{0.1cm}
  \item[(ii)] $\|\tau_{h}u - u\|_{L^{p}(0,T-h;V')} \to 0$ uniformly for $u \in W$ as $h \to 0$.
  \vspace{0.1cm}
\end{itemize}
Then $W$ is relatively compact in $L^{p}(0,T;H)$. 
Here, $\tau_{h}$ denotes the shift operator defined by $(\tau_{h}u)(t) := u(t+h)$.
\end{proposition}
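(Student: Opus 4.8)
The statement is the Aubin--Lions--Simon compactness lemma specialized to the Gelfand triple $V \hookrightarrow H \equiv H' \hookrightarrow V'$. Its applicability here rests on the fact that, for the Stokes spaces of \eqref{202503141707}--\eqref{202503141708} over a bounded Lipschitz domain, the embedding $V \hookrightarrow H$ is \emph{compact} (it is the restriction to divergence-free fields of the Rellich--Kondrachov embedding $\mathbb{H}^1_0(\Omega)\hookrightarrow\mathbb{L}^2(\Omega)$). The plan is to reduce the claim to the vector-valued Fréchet--Kolmogorov criterion, the essential new input being this compactness. The first tool I would record is \emph{Ehrling's lemma}: since $V\hookrightarrow H$ is compact and $H\hookrightarrow V'$ is continuous, for every $\eta>0$ there is $C_\eta>0$ with $\|v\|_H\le \eta\|v\|_V+C_\eta\|v\|_{V'}$ for all $v\in V$. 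This follows by contradiction, since a violating normalized sequence $\|v_n\|_H=1$ would be bounded in $V$ yet tend to $0$ in $V'$; compactness of $V\hookrightarrow H$ extracts an $H$-limit $v$ with $\|v\|_H=1$, while continuity of $H\hookrightarrow V'$ forces $v=0$.

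Next I would transfer the hypotheses to the space $H$. Setting $M:=\sup_{u\in W}\|u\|_{L^p(0,T;V)}<\infty$, the continuous embedding $V\hookrightarrow H$ shows that $W$ is bounded in $L^p(0,T;H)$. Applying Ehrling's inequality pointwise to $\tau_h u-u$ and taking $L^p$-norms (via Minkowski) gives, for each fixed $\eta>0$,
\[
\sup_{u\in W}\|\tau_h u-u\|_{L^p(0,T-h;H)}\le 2\eta M+C_\eta\,\sup_{u\in W}\|\tau_h u-u\|_{L^p(0,T-h;V')}.
\]
Letting $h\to 0^+$ annihilates the last term by hypothesis (ii), so $\limsup_{h\to0^+}\sup_{u\in W}\|\tau_h u-u\|_{L^p(0,T-h;H)}\le 2\eta M$; as $\eta>0$ is arbitrary, the translations are equicontinuous in $L^p(0,T;H)$, uniformly over $W$.

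The decisive step is to supply the \emph{spatial} compactness that, in the vector-valued setting, is \emph{not} implied by translation equicontinuity alone. For $0<t_1<t_2<T$ and $u\in W$, Hölder's inequality yields
\[
\Big\|\int_{t_1}^{t_2}u(t)\,dt\Big\|_V\le (t_2-t_1)^{1-1/p}\,M,
\]
so the set $\big\{\int_{t_1}^{t_2}u(t)\,dt:u\in W\big\}$ is bounded in $V$ and hence relatively compact in $H$ by the compact embedding. Together with the translation equicontinuity established above, these are precisely the two conditions in J.~Simon's characterization of relatively compact subsets of $L^p(0,T;H)$ (\cite[Theorem 1]{JSCompS}), which then delivers the relative compactness of $W$ in $L^p(0,T;H)$.

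I expect the main obstacle to be exactly this coupling: the translation estimate in $H$ is routine once Ehrling is available, but in infinite dimensions it is genuinely insufficient, and the compact embedding $V\hookrightarrow H$ is indispensable to obtain precompactness of the time-averages in $H$; it is the interplay of the two facts, not either alone, that forces compactness. If one prefers to avoid invoking \cite[Theorem 1]{JSCompS} as a black box, the same two ingredients feed a self-contained mollification argument: the Steklov averages $\mathcal{A}_\delta u(t)=\tfrac1\delta\int_t^{t+\delta}u(s)\,ds$ approximate $u$ in $L^p(0,T;H)$ uniformly over $W$ (by the translation equicontinuity), while for fixed $\delta$ the family $\{\mathcal{A}_\delta u\}_{u\in W}$ is equicontinuous in $t$ and pointwise precompact in $H$, hence relatively compact by Arzelà--Ascoli; a set uniformly approximable by relatively compact sets is totally bounded, which gives the conclusion.
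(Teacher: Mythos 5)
Your argument is correct, but it is worth noting that the paper itself offers no proof of this proposition: it is quoted verbatim as \cite[Theorem~5]{JSCompS}, so there is nothing internal to compare against. What you have written is essentially a reconstruction of Simon's own proof of that theorem, reduced to his Theorem~1 (the vector-valued Fr\'echet--Kolmogorov characterization: precompactness of the time-averages in $H$ plus uniform translation equicontinuity in $L^p(0,T;H)$), with Ehrling's interpolation inequality converting the $V'$-equicontinuity of hypothesis (ii) into $H$-equicontinuity. Two remarks. First, you correctly flag the one point that the paper's statement leaves implicit: the conclusion is false without the \emph{compactness} of $V\hookrightarrow H$ (and injectivity of $H\hookrightarrow V'$), which here is supplied by Rellich--Kondrachov for $\mathbb{H}^1_0(\Omega)\hookrightarrow\mathbb{L}^2(\Omega)$ on the bounded domain $\Omega$, restricted to the closed divergence-free subspaces; making that hypothesis explicit is a genuine improvement on the paper's formulation. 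Second, your fallback via Steklov averages and Arzel\`a--Ascoli is sound (the averages $\mathcal{A}_\delta u$ live on $[0,T-\delta]$, a boundary detail handled in the standard way), and it has the merit of making the proof self-contained rather than trading one citation of \cite{JSCompS} for another. Either version would serve as a complete proof where the paper has only a reference.
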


On the other hand, Li and Liu, in~\cite[Proposition~3.4]{LiLiu1}, established a criterion for the uniform convergence $\|\tau_{h}u - u\|_{L^{p}(0,T-h;V')} \to 0$, based on certain estimates for the Caputo fractional derivative of $u$. In their results, particular attention is given to the fact that the Caputo derivative must be properly defined at $t=0$ in order to make sense. To emphasize this aspect, we adapt their proof below, providing a version that fully addresses this issue.

\begin{proposition}\label{UniformComverV'}
There exists a constant $C>0$, independent of $h>0$ and $m \in \mathbb{N}$, such that for any $1\leq r < 1/(1-\alpha)$,
\begin{equation*}
  \|\tau_h u_m - u_m\|_{L^{r}(0,T-h;V')} \le C\|cD_{t}^{\alpha}u_{m}\|_{L^{1}(0,T;V')}^{r} h^{\alpha + \frac{1}{r} - 1}.
\end{equation*}
\end{proposition}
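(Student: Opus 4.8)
The plan is to start from the integral representation of the approximate solution. Since $u_m \in C([0,T];V)$ with $cD_t^\alpha u_m \in L^2(0,T;V) \subset L^1(0,T;V')$, item (v) of Proposition \ref{Pro1} (equivalently, the integral formulation already used to construct $u_m$ via the Carath\'eodory theory of Subsection \ref{caratsection}) yields
$$u_m(t) = u_{0m} + \frac{1}{\Gamma(\alpha)}\int_0^t (t-s)^{\alpha-1}\, cD_s^\alpha u_m(s)\,ds = u_{0m} + J_t^\alpha g_m(t), \qquad g_m := cD_t^\alpha u_m.$$
Writing out $u_m(t+h)-u_m(t)$ and splitting the integral over $[0,t+h]$ into $[0,t]$ and $[t,t+h]$ gives, for every $t\in[0,T-h]$,
$$u_m(t+h)-u_m(t) = \frac{1}{\Gamma(\alpha)}\int_0^t\big[(t+h-s)^{\alpha-1}-(t-s)^{\alpha-1}\big]g_m(s)\,ds + \frac{1}{\Gamma(\alpha)}\int_t^{t+h}(t+h-s)^{\alpha-1}g_m(s)\,ds.$$

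Next I would recognize the convolution structure. Set $\phi_m(s):=\|g_m(s)\|_{V'}$, extended by zero outside $[0,T]$. Because $\alpha-1<0$ the map $x\mapsto x^{\alpha-1}$ is decreasing, so $(t-s)^{\alpha-1}-(t+h-s)^{\alpha-1}\ge 0$; taking $V'$-norms therefore produces a bound by \emph{nonnegative} kernels,
$$\|u_m(t+h)-u_m(t)\|_{V'} \le \frac{1}{\Gamma(\alpha)}(\Phi_h * \phi_m)(t) + \frac{1}{\Gamma(\alpha)}(\Psi_h * \widetilde{\phi}_m)(t),$$
where $\Phi_h(\tau):=[\tau^{\alpha-1}-(\tau+h)^{\alpha-1}]\chi_{(0,\infty)}(\tau)$, $\Psi_h(\tau):=\tau^{\alpha-1}\chi_{(0,h)}(\tau)$, and $\widetilde{\phi}_m(\cdot):=\phi_m(\cdot+h)$, the second identification coming from the substitution $\tau=t+h-s$. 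Young's convolution inequality $\|f*g\|_{L^r}\le\|f\|_{L^r}\|g\|_{L^1}$, together with the translation invariance $\|\widetilde{\phi}_m\|_{L^1}=\|\phi_m\|_{L^1}$, then gives
$$\|\tau_h u_m - u_m\|_{L^r(0,T-h;V')} \le \frac{1}{\Gamma(\alpha)}\big(\|\Phi_h\|_{L^r(\mathbb{R})}+\|\Psi_h\|_{L^r(\mathbb{R})}\big)\,\|cD_t^\alpha u_m\|_{L^1(0,T;V')},$$
reducing everything to estimating the two kernel norms.

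The computation of the kernel norms is where the hypotheses enter. For $\Psi_h$ one has $\|\Psi_h\|_{L^r}^r=\int_0^h\tau^{(\alpha-1)r}\,d\tau$, which is finite precisely when $(\alpha-1)r>-1$, i.e. $r<1/(1-\alpha)$, and equals $\tfrac{h^{(\alpha-1)r+1}}{(\alpha-1)r+1}$, so $\|\Psi_h\|_{L^r}=C_{\alpha,r}\,h^{\alpha-1+1/r}$. For $\Phi_h$ the substitution $\tau=h\sigma$ factors out the scaling,
$$\|\Phi_h\|_{L^r}^r = h^{(\alpha-1)r+1}\int_0^\infty\big[\sigma^{\alpha-1}-(\sigma+1)^{\alpha-1}\big]^r\,d\sigma,$$
so the exponent $\alpha-1+1/r$ of $h$ appears automatically, and it remains to show the scale-free integral converges. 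Near $\sigma=0$ the integrand behaves like $\sigma^{(\alpha-1)r}$, reproducing the sharp constraint $r<1/(1-\alpha)$; near $\sigma=\infty$, the mean-value estimate $\sigma^{\alpha-1}-(\sigma+1)^{\alpha-1}=(1-\alpha)\int_\sigma^{\sigma+1}w^{\alpha-2}\,dw\le(1-\alpha)\sigma^{\alpha-2}$ (the same monotonicity device as in Lemma \ref{auxlema}) shows the integrand decays like $\sigma^{(\alpha-2)r}$, which is integrable for every $r\ge 1$ since $2-\alpha>1$. Hence $\|\Phi_h\|_{L^r}=C'_{\alpha,r}\,h^{\alpha-1+1/r}$ with a constant depending only on $\alpha$ and $r$, neither on $h$ nor on $m$, and combining the two kernel bounds yields the asserted estimate with the factor $h^{\alpha+1/r-1}$ and $\|cD_t^\alpha u_m\|_{L^1(0,T;V')}$ entering linearly.

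The main obstacle is the $\Phi_h$ kernel: unlike $\Psi_h$, which is estimated by a direct integration, one must show simultaneously that $\|\Phi_h\|_{L^r}$ is finite \emph{and} that it scales exactly as $h^{\alpha-1+1/r}$. The rescaling $\tau=h\sigma$ delivers the correct power of $h$ cleanly, so the real work is the convergence of the resulting $h$-independent integral; the endpoint $\sigma=0$ is what forces the restriction $r<1/(1-\alpha)$, while convergence at infinity rests entirely on quantifying the cancellation $\tau^{\alpha-1}-(\tau+h)^{\alpha-1}$ through the fundamental theorem of calculus and monotonicity rather than on a pointwise asymptotic heuristic. Once this integrability is secured, the remainder of the argument is bookkeeping with Young's inequality.
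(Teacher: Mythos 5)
Your proof is correct. It uses the same decomposition as the paper --- both start from the representation $u_m = u_{0m} + J_t^\alpha\,cD_t^\alpha u_m$ and split $\tau_h u_m - u_m$ into the tail integral over $(t,t+h)$ with kernel $(t+h-s)^{\alpha-1}$ and the difference-kernel integral over $(0,t)$ --- but the way the $L^r$ norm in $t$ is extracted differs. The paper writes $K\phi=(K^r\phi)^{1/r}\phi^{(r-1)/r}$, applies H\"older with exponents $r$ and $r/(r-1)$, and then uses Fubini--Tonelli to reduce matters to bounding $\int K(t-s,h)^r\,dt$ uniformly in $s$; this is exactly the standard proof of the Young inequality $L^r\ast L^1\to L^r$ carried out by hand, which you instead invoke directly after exhibiting the convolution structure. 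For the difference kernel the paper uses $(b-a)^r\le b^r-a^r$ together with a telescoping integral to obtain the explicit constant $1/(r(\alpha-1)+1)$, whereas you rescale $\tau=h\sigma$ and verify convergence of the resulting $h$-independent integral at $0$ (where the restriction $r<1/(1-\alpha)$ enters) and at infinity (via the mean-value bound); both computations yield the same power $h^{\alpha-1+1/r}$ per kernel, yours making the scaling origin of that exponent transparent, the paper's giving sharper explicit constants. Note also that your version arrives at the homogeneous form of the estimate, with $\|cD_t^\alpha u_m\|_{L^1(0,T;V')}$ entering linearly rather than to the power $r$; this is in fact what the paper's final display gives after taking $r$-th roots (its left-hand side is the $r$-th power of the $L^r$ norm), so the exponent $r$ on the $L^1$ norm in the statement of the proposition is a typographical slip, not a discrepancy with your argument. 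In both approaches the constants depend only on $\alpha$ and $r$, so the required uniformity in $m$ and $h$ holds.
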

\begin{proof}
		Since $\{w_j\}_{j=1}^\infty$ is a complete system in $V$, the first equation in \eqref{NSF3FA} yields
\begin{equation}\label{202510271407}
  (cD_t^\alpha u_m(t), \eta)_H+\nu (u_m(t), \eta)_V
  = \langle f(t), \eta \rangle_{V',V},
  \quad \forall\, \eta \in V.
\end{equation}
Consequently,
\begin{equation*}
  \|cD_t^\alpha u_m\|_{L^{1}(0,T;V')}
  \le C \left(
    \|f\|_{L^{1}(0,T;V')}
    + \|u_m\|_{L^{1}(0,T;V)}
  \right),
  \end{equation*}
for some constant $C>0$ independent of $m$. Therefore, by inequality \eqref{energy02}, there exists $K>0$ such that
\begin{equation*}
  \|cD_t^\alpha u_m\|_{L^{1}(0,T;V')}
  \le K, \qquad \forall\, m \in \mathbb{N}.
\end{equation*}
Now, the regularity of $\{u_m\}_{m=1}^\infty$ and item~(v) of Proposition~\ref{Pro1} ensure that  
\begin{equation*}
  u_m(t)
  = u_{0m} + \frac{1}{\Gamma(\alpha)} 
    \int_0^t (t-s)^{\alpha-1} cD_s^\alpha u_m(s)\, ds,
  \quad \text{for a.e. } t \in [0,T].
\end{equation*}
Hence, for sufficiently small values of $h>0$, we compute
\begin{equation*}
  \tau_h u_m(t) - u_m(t)
 = \frac{1}{\Gamma(\alpha)} \Bigg(
    \int_t^{t+h} K_1(t-s,h)\, cD_s^\alpha u_m(s)\, ds
    - \int_0^t K_2(t-s,h)\, cD_s^\alpha u_m(s)\, ds
  \Bigg),
\end{equation*}
where $K_1(\tau,h) := (\tau + h)^{\alpha-1}$ and 
$K_2(\tau,h) := \tau^{\alpha-1} - (\tau + h)^{\alpha-1}$. 
Thus, we have
\begin{multline*}
  \int_0^{T-h} \|\tau_h u_m(t) - u_m(t)\|_{V'}^r\, dt
  \\\le \frac{2^{r-1}}{\Gamma(\alpha)^r}
    \Bigg[
      \int_0^{T-h}
        \Bigg(
          \int_t^{t+h} K_1(t-s,h) \|cD_s^\alpha u_m\|_{V'}\, ds
        \Bigg)^r dt \\
  + \int_0^{T-h}
        \Bigg(
          \int_0^t K_2(t-s,h) \|cD_s^\alpha u_m\|_{V'}\, ds
        \Bigg)^r dt
    \Bigg].
\end{multline*}

		Applying Hölder's inequality with the conjugate exponents $r$ and $r' = r/(r-1)$, we obtain
		\begin{multline*}
			\int_{t}^{t+h}K_{1}(t-s,h)\|cD_{s}^{\alpha}u_{m}(s)\|_{V'}\\\leq \left[\int_{t}^{t+h}\big(K_{1}(t-s,h)\big)^{r}\|cD_{s}^{\alpha}u_{m}(s)\|_{V'}ds\right]^{1/r}\left(\int_{t}^{t+h}\|cD_{s}^{\alpha}u_{m}(s)\|_{V'}ds\right)^{(r-1)/r}
		\end{multline*}	
		and 
		\begin{multline*}
			\int_{0}^{t}\big[K_{2}(t-s,h)\big]\|cD_{s}^{\alpha}u_{m}\|_{V'}\\\leq \left[\int_{0}^{t}[K_{2}(t-s,h)]^{r}\|cD_{s}^{\alpha}u_{m}(s)\|_{V'}ds\right]^{1/r}\left(\int_{0}^{t}\|cD_{s}^{\alpha}u_{m}(s)\|_{V'}ds\right)^{(r-1)/r},
		\end{multline*}
        and therefore that
\begin{multline}\label{202510271112}
  \int_0^{T-h} \|\tau_h u_m(t) - u_m(t)\|_{V'}^r\, dt
  \\\le \frac{2^{r-1}\|cD_{t}^{\alpha}u_{m}\|^{r-1}_{L^1(0,T;V')}}{\Gamma(\alpha)^r}
    \Bigg\{
      \int_{0}^{T-h}\left[\int_{t}^{t+h}\big(K_{1}(t-s,h)\big)^{r}\|cD_{s}^{\alpha}u_{m}(s)\|_{V'}ds\right]dt \\
  + \int_{0}^{T-h}\left[\int_{0}^{t}\big[K_{2}(t-s,h)\big]^{r}\|cD_{s}^{\alpha}u_{m}(s)\|_{V'}ds\right]dt
    \Bigg\}.
\end{multline}
        
		Furthermore, by the Fubini–Tonelli theorem, we have
		\begin{multline}\label{202510271113}
			\int_{0}^{T-h}\left[\int_{t}^{t+h}\big(K_{1}(t-s,h)\big)^{r}\|cD_{s}^{\alpha}u_{m}(s)\|_{V'}ds\right]dt\\
        =\int_{0}^{h}\|cD_{s}^{\alpha}u_{m}(s)\|_{V'}\left[\int_{0}^{s}\big(K_{1}(t-s,h)\big)^{r}dt\right]ds
       \\ +\int_{h}^{T-h}\|cD_{s}^{\alpha}u_{m}(s)\|_{V'}\left[\int_{s-h}^{s}\big(K_{1}(t-s,h)\big)^{r}dt\right]ds,
		\end{multline}
        and that
        \begin{multline}\label{202510271114}
			\int_{0}^{T-h}\left[\int_{0}^{t}\big[K_{2}(t-s,h)\big]^{r}\|cD_{s}^{\alpha}u_{m}(s)\|_{V'}ds\right]dt
        \\=\int_{0}^{T-h}\|cD_{s}^{\alpha}u_{m}(s)\|_{V'}\left[\int_{s}^{T-h}\big[K_{2}(t-s,h)\big]^{r}dt\right]ds.
		\end{multline}

        Now observe that:
        \begin{itemize}
        \item[(i)] for $s\in(0,h)$ we have
        \begin{equation}\label{202510271115}\int_{0}^{s}\big(K_{1}(t-s,h)\big)^{r}dt=\dfrac{h^{\alpha r}-(h-s)^{\alpha r}}{\alpha r}\leq\dfrac{h^{\alpha r}}{\alpha r};\end{equation}
        \item[(ii)] for $s\in(h,T-h)$ we have
        \begin{equation}\label{202510271116}\int_{s-h}^{s}\big(K_{1}(t-s,h)\big)^{r}dt=\dfrac{h^{\alpha r}}{\alpha r};\end{equation}
        \item[(iii)] for $s\in(0,T-h)$, since $(b-a)^q\leq b^q-a^q$ for $0\leq a\leq b$ and $1\leq q$ and $1\leq r<1/(1-\alpha)$, we may deduce the inequality
        \begin{equation}\label{202510271117}\int_{s}^{T-h}\big[K_{2}(t-s,h)\big]^{r}dt\leq \int_{s}^{T-h}\Big[(t-s)^{r(\alpha-1)}-(t+h-s)^{r(\alpha-1)}\Big]dt
			\leq\frac{h^{r(\alpha-1)+1}}{r(\alpha-1)+1}.
        \end{equation}
        \end{itemize}
        
        It follows from \eqref{202510271113}, \eqref{202510271114}, \eqref{202510271115}, \eqref{202510271116} and \eqref{202510271117}, when applied to inequality \eqref{202510271112}, that
        \begin{eqnarray}
			\int_{0}^{T-h}\|\tau_{h}u_{m}(t)-u_{m}(t)\|_{V'}^{r}\;dt&\leq&C\|cD_{s}^{\alpha}u_{m}\|_{L^{1}(0,T;V')}^{r}h^{r(\alpha-1)+1}.\nonumber
		\end{eqnarray}
\end{proof}

Proposition \ref{UniformComverV'} together with Proposition \ref{CompacForH} allow us to assert that for $\alpha\in(1/2,1)$ and $r=p=2$, the sequence $\{u_{m}\}_{m=1}^\infty$ is relatively compact in $L^{2}(0,T;H)$. That is, there exists a function $\tilde{u}\in L^{2}(0,T;H)$ such that, up to a subsequence,  
\begin{equation*}
	u_{m}\to \tilde{u}\hspace{0.3cm}\text{in}\hspace{0.3cm} L^{2}(0,T;H).
\end{equation*}

Thus, the aforementioned convergences~\eqref{202510161209} ensure that $\tilde{u} = u$. Moreover, thanks to the continuity of the operator $J_t^{1-\alpha}:L^2(0,T;H) \rightarrow L^2(0,T;H)$ (see details in \cite[Theorem 3.1]{CarFe0}), we deduce from~\eqref{202510271220} that $w(t) = \|u(t)-u_0\|_H^2$, for almost every $t\in[0,T]$. Summarizing the convergences obtained in this subsection, we have:
\begin{equation*}
	\left\{\begin{array}{l c c c l}
		u_{m}&\rightharpoonup&u&in&L_{1-\alpha}^{2}(0,T;H),\\
        u_{m}&\rightharpoonup& u&in&L^{2}(0,T;V),\\
        J_t^{1-\alpha}\|u_m(\cdot)-u_{0m}\|^2_H &\stackrel{*}{\rightharpoonup}& J_t^{1-\alpha}\|u(t)-u_0\|_H^2&in&L^{\infty}(0,T),\\
        u_{m}&\to& u&in&L^{2}(0,T;H).
        \end{array}\right.
\end{equation*}

The purpose of the remainder of this section is to prove that $u$ satisfies~\eqref{NSF3}. Let $\phi \in C_c^\infty([0,T];\mathbb{R})$. Multiplying~\eqref{202510271407} by $\phi$ and integrating over $(0,T)$, we obtain
\begin{equation*}
  \int_0^T (cD_t^\alpha u_m(t), \eta)_H\, \phi(t)\, dt 
  + \nu \int_0^T (u_m(t), \eta)_V\, \phi(t)\, dt
  = \int_0^T \langle f(t), \eta \rangle_{V',V}\, \phi(t)\, dt,
  \quad \forall\, \eta \in V.
\end{equation*}
This identity can be equivalently written as
\begin{equation}\label{202510271445}
  -\int_0^T (J_t^{1-\alpha} (u_m(t)-u_{0m}), \phi^\prime(t) \eta)_H\, dt 
  + \nu \int_0^T (u_m(t), \phi(t) \eta)_V\, dt
  = \int_0^T \langle f(t), \phi(t) \eta \rangle_{V',V}\, dt,
  \quad \forall\, \eta \in V.
\end{equation}

Since $\phi \eta\in L^{2}(0,T;V)$ and $u_m \rightharpoonup u$ in $L^{2}(0,T;V)$, it holds that
\begin{equation}\label{202510271455}
\int_{0}^{T}(u_{m}(t),\phi(t)\eta)_V\,dt
  \to
\int_{0}^{T}(u(t),\phi(t)\eta)_V\,dt.
\end{equation}

On the other hand, since $u_m \to u$ in $L^{2}(0,T;H)$, and recalling from \cite[Theorem~3.1]{CarFe0} the continuity of the operator $J_t^{1-\alpha}:L^2(0,T;H) \rightarrow L^2(0,T;H)$, together with $u_{0m}\to u_0$ in $H$, we deduce that
$$
J_t^{1-\alpha}(u_m - u_{0m}) \to J_t^{1-\alpha}(u - u_{0})
\quad \text{in } L^{2}(0,T;H).
$$
Consequently, it follows that
\begin{equation}\label{202510271507}
-\int_0^T (J_t^{1-\alpha} (u_m(t)-u_{0m}), \phi'(t) \eta)_H\, dt
\;\to\;
-\int_0^T (J_t^{1-\alpha} (u(t)-u_{0}), \phi'(t) \eta)_H\, dt.
\end{equation}

Thus, applying \eqref{202510271455} and \eqref{202510271507} to \eqref{202510271445}, we conclude that
\begin{equation*}
	-\int_{0}^{T}\big(J_{t}^{1-\alpha}(u(t)-u_{0}),\phi'(t)\eta\big)_H\,dt
  + \nu\int_{0}^{T}(u(t),\phi(t)\eta)_V\,dt
  = \int_{0}^{T}\langle f(t),\phi(t)\eta\rangle_{V',V}\, dt,
  \quad \forall\, \eta \in V.
\end{equation*}

Therefore, in the sense of distributions, we obtain that for all $v \in V$,
\begin{equation*}
	\frac{d}{dt}\big(J_{t}^{1-\alpha}(u(t)-u_{0}),v\big)_H
  + \nu (u(t),v)_V
  = \langle f(t),v\rangle_{V',V},
\end{equation*}
for almost every $t \in [0,T]$, which by Fubini–Tonelli’s theorem, is equivalent to
\begin{equation*}
	D_t^{1-\alpha}\big((u(t)-u_{0}),v\big)_H
  + \nu (u(t),v)_V
  = \langle f(t),v\rangle_{V',V},
\end{equation*}
for almost every $t \in [0,T]$, as desired.

\subsection{Uniqueness of Solution}

At the beginning of Section \ref{galerfrac} we proved that the regularity assumptions on the fractional weak solution, given in Definition \ref{202510151753}, are sufficient to guarantee that
$D_t^\alpha (u(t) - u_0) \in L^2(0,T;V')$ and (recall identity \eqref{202510271553})
\begin{equation}\label{202510271602}
D_t^\alpha (u(t) - u_0) + \nu A u(t) = f(t) \quad \text{in } V',
\end{equation}
for almost every $t \in [0,T]$. As a consequence, together with other results established therein, we obtained that $u \in C([0,T];V')$, which makes the initial condition $u(0) = u_0$ for $u_0 \in H$ meaningful, since $H \subset V'$.

We now prove the uniqueness of the weak solution. Let $u_1$ and $u_2$ be two weak solutions, and define $\tilde{u} = u_1 - u_2$. From \eqref{202510271602} it follows that  
\begin{equation}\label{202510271558}
D_t^\alpha \tilde{u}(t) + \nu A \tilde{u}(t) = 0,
\qquad
\tilde{u}(0) = 0,
\end{equation}
in $V'$. Applying $J_t^\alpha$ to both sides of \eqref{202510271558} and using item (iii) of Proposition \ref{Pro1}, we obtain  
\begin{equation*}
\tilde u(t) + \nu J_t^\alpha A \tilde u(t) = 0 \quad \text{in } V',
\end{equation*}
for almost every $t \in [0,T]$. Since $\tilde u \in L^2(0,T;V)$, taking the duality pairing with $\tilde u(t)$ yields  
\begin{equation*}
\langle \tilde u(t), \tilde u(t) \rangle_{V',V} 
+ \nu \big\langle J_t^\alpha A \tilde u(t), \tilde u(t) \big\rangle_{V',V} = 0,
\end{equation*}
for almost every $t \in [0,T]$. By definition of the duality pairings, this is equivalent to  
\begin{equation}\label{202510301553}
\|\tilde u(t)\|_H^2 
+ \nu \big( J_t^\alpha \nabla \tilde u(t), \nabla \tilde u(t) \big)_{\mathbb{L}^{2}(\Omega)} = 0,
\end{equation}
for almost every $t \in [0,T]$.

Since $\nabla \tilde u \in L^2(0,T;\mathbb{L}^{2}(\Omega))$, item (ii) of Proposition \ref{Pro1} gives  
$$D_t^\alpha [J^\alpha_t\nabla \tilde u(t)] = \nabla \tilde u(t),$$
for almost every $t \in [0,T]$. Hence, \eqref{202510301553} can be rewritten as  
\begin{equation}\label{202510301605}
\|\tilde u(t)\|_H^2 
+ \nu \Big( D_t^\alpha [J^\alpha_t\nabla \tilde u(t)], J_t^\alpha \nabla \tilde u(t) \Big)_{\mathbb{L}^{2}(\Omega)} = 0,
\end{equation}
for almost every $t \in [0,T]$.  

Because $\alpha > 1/2$, Theorem 7 in \cite{CarFe1} ensures that $J^\alpha_t\nabla \tilde u \in C([0,T];\mathbb{L}^{2}(\Omega))$, allowing us to apply Proposition \ref{TFDF} to \eqref{202510301605}. We then conclude that $D_t^\alpha \|J_t^\alpha \nabla \tilde u(t)\|_V^2 \in L^1(0,T)$ and  
\begin{equation}\label{202510301611}
\|\tilde u(t)\|_H^2 
+ (\nu/2) D_t^\alpha \|J_t^\alpha \nabla \tilde u(t)\|_V^2 \leq 0,
\end{equation}
for almost every $t \in [0,T]$. Finally, by item (iii) of Proposition \ref{Pro1}, \eqref{202510301611} is equivalent to  
\begin{equation*}
J_t^\alpha \|\tilde u(t)\|_H^2 
+ (\nu/2) \|J_t^\alpha \nabla \tilde u(t)\|_V^2 \leq 0,
\end{equation*}
for almost every $t \in [0,T]$. Hence $J_t^\alpha \|\tilde u(t)\|_H^2 = 0$ a.e.\ in $[0,T]$. Since $J_t^\alpha : L^2(0,T) \to L^2(0,T)$ is injective (if follows from item (ii) of Proposition \ref{Pro1}), we conclude that $\tilde{u} = 0$, or equivalently, $u_1 = u_2$. This proves uniqueness.

\section{Closing Remarks}

Throughout this study, several questions have remained open, and we would like to highlight some of them below. We emphasize that part of these topics are currently under investigation and will be addressed in future work.

\begin{itemize}
\item[(i)] In analogy with the classical case $\alpha = 1$, can one prove that the weak solution $u$ possesses improved continuity properties, namely $u \in C([0,T];H)$?
\item[(ii)] A natural strategy for establishing uniqueness would be to generalize Lemma~1.2 from Chapter 3 of Temam’s classical monograph \cite{RTemamb} in the spirit of Proposition \ref{TFDF}, following the same line of reasoning as in the standard case. A result of this nature was already proved by Carvalho-Neto and Fehlberg Júnior in \cite[Theorem 4.16]{PaRFL}; however, their approach requires the additional assumption $J_t^{1-\alpha}\|u(t)\|_H^2 \in W^{1,1}(0,T)$, a regularity property that we have not been able to obtain in our setting.
\item[(iii)] If stronger regularity is imposed on the initial condition, for instance $u_0 \in V$, what additional regularity properties for $u$ could then be derived?
\item[(iv)] How can one handle the nonlinear term in the fractional Navier–Stokes system, given that even the Stokes case already presents substantial analytical challenges that are not yet fully understand?
\end{itemize}


\begin{thebibliography}{9}

\bibitem{ArVecValu} W. Arent, C.J.K. Batty, M. Hieber, and F.Neubrander, Vector-valued Laplace Transforms and Cauchy Problems: Second Edition. Monographs in Mathematics. Springer Basel, 2011. 

\bibitem{Baz} E. G. Bazhlekova, Fractional evolution equations in Banach spaces. [Phd Thesis 1 (Research TU/e /
Graduation TU/e), Mathematics and Computer Science]. Technische Universiteit Eindhoven, 2001.

\bibitem{FbPfMa}F. Boyer and P. Fabrie, Mathematical tools for the study of the incompressible Navier-Stokes equations and related models, Springer Science and Business Media, New York, 2013.

\bibitem{CapLin} M. Caputo, Linear models of dissipation whose Q is almost frequency independent-II, Geophys. J. Int. 13 (5) (1967) 529-539.

\bibitem{AcGEcAn} A. Carbotti, G.E. Comi, A note on Riemann-Liouville fractional Sobolev spaces. Commun. Pure. Appl. Math. 20 (1) (2021) 17-54.

\bibitem{Car} P. M. Carvalho-Neto, Fractional differential equations: a novel study of local and global solutions in Banach spaces, Ph.D. Thesis, Universidade de São Paulo, São Carlos, 2013. 

\bibitem{CarFrOyTo1} P. M. Carvalho-Neto, C. L. Frota, J. C. Oyola Ballesteros, P. G. P. Torelli, Energy Estimates for Fractional Evolution Equations, arXiv:2508.05780.

\bibitem{CarFe0}  P. M. Carvalho-Neto, R. Felhberg-J\'{u}nior, The Riemann-Liouville fractional integral in Bochner-Lebesgue spaces I, Communications on Pure and Applied Analysis, 21(11) (2022) 3667-3700.

\bibitem{CarFe2}  P. M. Carvalho-Neto, R. Felhberg-Júnior, The Riemann-Liouville fractional integral in Bochner-Lebesgue spaces III, J. Math. Anal. Appl. 543(2) (2024) 129023
  
\bibitem{CarFe1}  P. M. Carvalho-Neto, R. Felhberg-J\'{u}nior, The Riemann-Liouville fractional integral in Bochner-Lebesgue spaces IV, arXiv:2410.00830.

\bibitem{PaRFL} P. M. Carvalho‐Neto, R. Fehlberg J\'{u}nior, On the fractional version of Leibniz rule, Math. Nachr. 293 (2020), 670-700.
    
\bibitem{CarPl1} P. M. De Carvalho-Neto, P. Gabriela, Mild solutions to the time fractional Navier–Stokes equations in RN, J. Differential Equations 259 (2015) 2948–2980.

\bibitem{CRdJDgApp}  C. R. Doering, J. D. Gibbon,  Applied analysis of the Navier-Stokes equations. Cambridge Texts in Applied Mathematics, Cambridge University Press, Cambridge, 1995.

\bibitem{CfSt} C. Foias, Statistical study of Navier-Stokes equations initial value problem. Rend. Sem. Mat. Univ. Padova. 48 (1972) 219-348.

\bibitem{CfGPCom} C. Foias, G. Prodi, Sur le comportement global des solutions non-stationnaires des équations de Navier-Stokes en dimension 2.  Rend. Sem. Mat. Univ. Padova. 39 (1967) 1-34.

\bibitem{CfRtGv} C. Foias, R. Temam, Gevrey class regularity for the solutions of the Navier-Stokes equations. J. Funct. Anal. 87(2) (1989) 359–369.

\bibitem{HaLi1} G. H. Hardy, J. E. Littlewood, Some properties of fractional integral. 1., Math. Z. Volume 27, Issue 1 (1928) 565–606.

\bibitem{HenG} D. Henry, Geometric Theory of Semilinear Parabolic Equations, Lecture Notes in Mathematics 840, Springer-Verlag, Berlin, 1981.

\bibitem{KiSrTrTh} A.A. Kilbas, H.M. Srivastava and J.J. Trujillo, Theory and Applications of fractional Differential Equations, Elsevier, Amsterdam, 2006.

\bibitem{LaWe1} K. Lan, K., J. R. L. Webb, A new Bihari inequality and initial value problems of first order fractional differential equations. Fract. Calc. Appl. Anal. 26 (2023) 962–988.

\bibitem{LiMa1} J.-L. Lions e E. Magenes, Non-Homogeneous Boundary Value Problems and Applications, Vol. 1, Springer-Verlag, Berlin, 1972.

\bibitem{JLEs} J. Leray, Etude de diverses equations integrates nonlineaires et de quelques problemes que pose l'hydrodynamique, J. Math. Pures Appl.  12 (1933) 1-82.

\bibitem{JLEs2} J. Leray, Essai sur les mouvements plans d'un liquide visqueux que limitent des parois, J. Math. Pures et Appl. 13 (1934)  331-418. 

\bibitem{JLEs3} J. Leray, Essai sur le mouvement d'un liquide visquex emplissant l'espace, Acta Math. 63 (1934) 193-248. 

\bibitem{LiLiu1} Li, L., Liu, J.-G.: Some compactness criteria for weak solutions of time fractional PDEs. SIAM J. Math. Anal. 50 (2018) 3963–3995.

\bibitem{FmFrac} F. Maninardi, Fractional Calculus and Waves in linear Viscoelasticity, Imperial College Press, London, 2010.

\bibitem{metzler} R. Metzler and J. Klafter. The random walk’s guide to anomalous diffusion: a fractional dynamics approach, {Physics Reports}, 339 (1) (2000) 1-77. 

\bibitem{GoKiMaRo1} R. Gorenflo, A. A. Kilbas, F. Mainardi, and S. V. Rogosin. Mittag-Leffler Functions, Related Topics and Applications. Springer, 2014.

\bibitem{RoNon}T. Roub{\'\i}{\v{c}}ek, Nonlinear partial differential equations with applications, International Sereries of Numerical Mathematics, Volume 153, Basel, 2005.

\bibitem{SaKiMaFrac} S. G. Samko, A. A. Kilbas, and O. I Marichev, Fractional Integral and Derivatives. Theory and Applications, Gordon and Breach Sci. Publishers, Yverdon, 1993.

\bibitem{HsRmAbTFnGen} H. Schiessel, R. Metzler, A. Blumen, T.F. Nonnenmacher, Generalized viscoelastic models: their fractional equations with solutionsTo. J. Phys. A: Math. Gen. 28 (1995) 6567-6584.

\bibitem{WRsWwFr} W.R. Schneider and W. Wyss, Fractional diffusion and wave equations. J. Math. Phys. 30 (1989) 134–144.

\bibitem{ShSa1} El-Shahed, M., Salem, A.: On the generalized Navier–Stokes equations. Appl. Math. Comput. 156, (2004) 287–293.

\bibitem{RTemamb} R. Temam, Navier-Stokes Equations: Theory and numerical analysis, Noth-Holland Publishing Company, Amsterdam, Vol 2, 1979.

\bibitem{JSCompS} J. Simon, Compact sets in the space $L^{p}(0,T;B)$. Ann. Math. Pure Appl. {146} (1986), 65-96.

\bibitem{WwTF} W. Wyss, The fractional diffusion equation. J. Math. Phys. 27 (1986) 2782–2785.

\bibitem{ZhPe1} Y. Zhou and L. Peng, On the time-fractional Navier-Stokes equations, Comput. Math. Appl. 73 (2017) 874-891.

\bibitem{ZhPe2} Y. Zhou and L. Peng, Weak solutions of the time-fractional Navier-Stokes equations and optimal control, Comput. Math. Appl. {73} (2017) 1016--1027.

\bibitem{ZoLvWu1} G. Zou, G. Lv, J.-L. Wu, Stochastic Navier–Stokes equations with Caputo derivative driven by fractional noises, J. Math. Anal. Appl. 461 (2018), 595-609.
\end{thebibliography}
\end{document}